\documentclass[a4paper,reqno]{amsart}

\usepackage[a4paper,width={16cm},left=2.5cm,bottom=3cm, top=3cm, marginparwidth=50pt]{geometry}
\usepackage{enumerate}

\usepackage[english]{babel}

\usepackage[dvipsnames]{xcolor}
\usepackage{amsmath}
\usepackage{amssymb}
\usepackage{amsfonts}
\usepackage{amsthm,graphicx}

\usepackage{hyperref}

\usepackage{mathtools}
\numberwithin{equation}{section}
\newtheorem{theorem}{Theorem}[section]
\newtheorem{lemma}[theorem]{Lemma}
\newtheorem{corollary}[theorem]{Corollary}

\newtheorem{proposition}[theorem]{Proposition}
\theoremstyle{definition}

\newtheorem{example}[theorem]{Example}
\newtheorem{remark}[theorem]{Remark}

\newcommand{\mc}{\mathcal}
\newcommand{\mb}{\mathbb}

\newcommand{\la}{\lambda}
\newcommand{\norm}[1]{\left\lVert#1\right\rVert}
\newcommand{\pd}[2]{\frac{\partial#1}{\partial#2}}

\newcommand{\R}{\mb{R}}
\newcommand{\C}{\mb{C}}
\newcommand{\N}{\mb{N}}

\newcommand{\e}{\varepsilon}

\newcommand{\ps}[3]{\left( #2, #3 \right)_{#1}}

\usepackage{subfig}
\usepackage{tikz}
\usepackage{xcolor}
\usepackage{pgfplots}
\pgfplotsset{compat=newest}
\usepackage{mathrsfs}

\begin{document}

\title[Aharonov-Bohm potentials with Neumann Boundary]
{Magnetic Neumann problems with Aharonov-Bohm potentials: boundary asymptotics of eigenvalues and splitting phenomena}

\author[V. Felli, P. Roychowdhury,  and G. Siclari]{Veronica Felli, Prasun Roychowdhury, and Giovanni Siclari}

\address{Veronica Felli 
  \newline \indent Dipartimento di Matematica e Applicazioni
  \newline \indent Universit\`a degli Studi di Milano–Bicocca
\newline\indent Via Cozzi 55, 20125 Milano, Italy}
\email{veronica.felli@unimib.it}

\address{Prasun Roychowdhury
  \newline \indent Graduate School of Advanced Science and Engineering
  \newline \indent Waseda University, 3-4-1 Okubo, Shinjuku-ku
  \newline\indent Tokyo 169-8555, Japan}
\email{w.iac25210@kurenai.waseda.jp}

\address{Giovanni Siclari 
  \newline \indent Centro di Ricerca Matematica Ennio De Giorgi
  \newline \indent Scuola Normale Superiore di Pisa
  \newline\indent Piazza dei Cavalieri 3, 56126 Pisa, Italy}
\email{giovanni.siclari@sns.it}

\date{February 16, 2026}

\begin{abstract}
  We study a planar magnetic Schr\"odinger operator with an
  Aharonov-Bohm vector potential, under Neumann boundary
  conditions. Through a gauge transformation, the corresponding
  eigenvalue problem can be formulated in terms of the Laplacian on a
  fractured domain, where the fracture lies along the segment
  connecting the pole to its projection on the boundary.  As the pole
  approaches the boundary, we prove that the eigenvalues converge to
  those of the Neumann Laplacian and the variation exhibits a
  logarithmic vanishing rate. In the case of multiple
  eigenvalues, when the pole approaches a fixed point of the boundary,
  we observe a splitting phenomenon, with the largest branch
  separating from the others.
\end{abstract}

\maketitle

{\bf Keywords.} Magnetic Schr\"odinger operators, Aharonov-Bohm
potentials, asymptotics of eigenvalues, elliptic coordinates.

\medskip 

{\bf MSC classification.}
35J10, %Schrödinger operator, Schrödinger equation
35P20, %Asymptotic distributions of eigenvalues in the context of PDEs
35J75. %Singular elliptic equations

\section{Introduction}\label{sec_introduction}

This paper deals with the spectral properties of the magnetic
Schr\"odinger operator $(i \nabla + A_a)^2$ in a planar domain
$\Omega$, subject to Neumann boundary conditions.  Here $A_a$ denotes
the Aharonov-Bohm vector potential with pole at $a=(a_1,a_2)\in \R^2$
and circulation $1/2$, defined as
\begin{equation}\label{eq:ABfield}
  A_a(x_1,x_2):=\frac12 \left( - \frac{x_2 - a_2}{(x_1 - a_1)^2 + (x_2
      - a_2)^2}, \frac{x_1 - a_1}{(x_1 - a_1)^2 + (x_2 -
      a_2)^2}\right),\quad (x_1,x_2)\in \R^2\setminus\{a\}.
\end{equation}
Let
\begin{equation}\label{eq:ass-Omega}
  \Omega\subset\R^2\quad\text{be a bounded, open, connected, and Lipschitz  set}
\end{equation}
and let 
\begin{equation}\label{hp_p}
 p\in L^\infty(\Omega)=L^\infty(\Omega,\R) \quad  \text{with}\quad
 p(x)\geq c>0 \text{ for a.e. }
 x \in \Omega,
\end{equation}
where $c\in(0,+\infty)$ is a positive constant.
For every $a\in \Omega$, we are interested in the eigenvalue problem
\begin{equation}\label{eq:eige_equation_a}\tag{$E_{\Omega,p}^a$}
  \begin{cases}
   (i\nabla + A_{a})^2 u = \lambda p u,  &\text{in }\Omega,\\
   (i\nabla + A_{a}) u\cdot \nu = 0, &\text{on }\partial \Omega,
 \end{cases}
\end{equation}
where $\nu$ denotes the outer unit normal vector to $\partial\Omega$.

The vector potential \eqref{eq:ABfield} generates the Aharonov-Bohm
magnetic field in $\mathbb{R}^2$ with pole at $a$ and circulation
$1/2$. This can be interpreted as the limit field produced by an
infinitely long, thin solenoid intersecting the $(x_1,x_2)$–plane
orthogonally at the point $a$, as the radius of the solenoid tends to
zero while the magnetic flux remains constant and equal to $1/2$ (see,
for instance, \cite{AT98} and \cite{AB59}). The case of a half-integer
circulation considered in \eqref{eq:ABfield} has received the greatest
attention in the literature, due to its particular mathematical
relevance and its connections with the problem of spectral minimal
partitions; see \cite{BNHHO2009} and \cite{NT2010}.

For Schrödinger operators of the form $(i\nabla + A_a)^2$, the
continuity of the eigenvalues with respect to the position of the pole
was established in \cite{BNNNT2014} for the case of a single pole, and
in \cite{L2015} for multiple poles, even when collisions
occur. Building on these results, several authors have investigated the
problem of determining the precise asymptotic behavior of the
eigenvalue variation induced by small perturbations of the pole
configuration. In the setting of a single moving pole with
half-integer circulation, \cite{BNNNT2014} first identified a
relationship between the convergence rate of the eigenvalues and the
number of nodal lines of the associated eigenfunction. Sharper
asymptotic expansions of simple eigenvalues were derived in
\cite{AFaharonov}, for the pole moving along the tangent to a nodal
line of the limit eigenfunction, and in \cite{AFsiam}, for the pole
moving along an arbitrary direction. The case of a pole approaching
the boundary was addressed in \cite{AFNN2017} and \cite{NMT-ab-bd},
while genericity issues were discussed in \cite{A2019}, \cite{AN2018},
and more recently in \cite{AFgen}.  The case of two colliding poles
was studied in \cite{AFHL,AFL2017,AFL2020}, whereas the coalescence
of many poles was treated in \cite{FNOS_multi} and \cite{FNS}.

In all the aforementioned papers, Dirichlet boundary conditions are
considered.  Such a choice of boundary conditions is particularly
relevant in the case of a pole approaching the boundary considered in
\cite{AFNN2017} and \cite{NMT-ab-bd}, as it is responsible for the
power-type vanishing order of the eigenvalue variation with respect to
the distance from the boundary. In the present study, we show instead
that, in the case of Neumann boundary conditions, when the pole
approaches the boundary, the typical behavior is logarithmic, in
contrast with what occurs in the Dirichlet setting.

While the spectral properties of the Aharonov-Bohm operator under
Dirichlet boundary conditions have been widely studied in the
literature, the case of Neumann conditions is less explored. In this
regard, we cite the contributions of \cite{CPS2022}, which studies
isoperimetric inequalities for the eigenvalues of
\eqref{eq:eige_equation_a},  and 
\cite{FLPS24}, dealing with Pólya-type inequalities on the disk, covering both
  Dirichlet and magnetic Neumann boundary conditions.
We also mention \cite{FH2019} for the study of ground-state energies
of the Neumann magnetic Laplacian, with both constant and non constant
magnetic fields, and \cite{CLPS2023,CS2018} for lower and upper bounds
for eigenvalues of magnetic Laplacians with magnetic Neumann boundary
conditions.

For every $a\in \Omega$, we can write a weak formulation of
\eqref{eq:eige_equation_a} in the functional space
$H^{1,a}(\Omega,\C)$, defined as the completion of
\begin{equation*}
  \{\phi \in H^1(\Omega,\C) \cap C^\infty(\Omega,\C): \phi\equiv 0
  \text{ in  a neighborhood of } a\}
\end{equation*}
with respect to the norm 
\begin{equation*}
\norm{w}_{H^{1,a}(\Omega,\C)}:=\bigg(\norm{w}_{L^2(\Omega,\C)}^2+
\norm{\nabla w}_{L^2(\Omega,\C^2)}^2+
\Big\|\tfrac{w}{|\cdot-a|}\Big\|_{L^2(\Omega,\C)}^2\bigg)^{\!\!1/2}.
\end{equation*}
We notice that
$H^{1,a}(\Omega,\C)=\left\{u\in H^1(\Omega,\C):\frac{u}{|\cdot-a|}\in
  L^2(\Omega,\C)\right\}$.  For every $a\in\ \Omega$, $\lambda\in\R$
is an eigenvalue of \eqref{eq:eige_equation_a} if there exists
$u\in H^{1,a}(\Omega,\C)\setminus\{0\}$ (called an eigenfunction) such
that
\begin{equation}\label{eq:weaksense}
  \int_\Omega (i\nabla u+A_{a} u)\cdot \overline{(i\nabla v+A_{a}
    v)}\,dx=\lambda\int_\Omega p u\overline{ v}\,dx \quad
  \text{for all }v\in H^{1,a}(\Omega,\C).
\end{equation}
By classical spectral theory, \eqref{eq:eige_equation_a} admits a sequence of real
diverging eigenvalues
\begin{equation*}
\lambda_1^{a}(\Omega,p) \leq \lambda_2^a (\Omega,p)\leq\ldots\leq \lambda_j^a (\Omega,p)\leq\ldots
\end{equation*}
(repeated according to their finite multiplicity).

We are interested in the dependence of the spectrum on the pole $a$ as
it moves through the domain towards the boundary.  As a first result,
in the present paper we prove that, as $a$ approaches $\partial \Omega$, the eigenvalues of
\eqref{eq:eige_equation_a} converge to those of the  Neumann Laplacian
on $\Omega$ weighted by $p$, i.e.  to the eigenvalues of
\begin{equation}\label{eq:eige_lapla}\tag{$E_{\Omega,p}$}
  \begin{cases}
   -\Delta  u = \lambda p u,  &\text{in }\Omega,\\
   \frac{\partial u}{\partial\nu}= 0, &\text{on }\partial \Omega,
 \end{cases}
\end{equation}
which are known to form an increasing real sequence (with repetitions
in accordance with multiplicities)
\begin{equation*}
    \lambda_1 (\Omega,p) \leq \lambda_2 (\Omega,p)\leq\ldots\leq \lambda_j (\Omega,p)\leq\ldots
\end{equation*}
diverging to $+\infty$.

\begin{proposition}\label{prop_spectral_stability}
Under assumptions \eqref{eq:ass-Omega}--\eqref{hp_p}, for every $a\in\Omega$,
    let $\{\lambda_j^a(\Omega,p)\}_{j\geq1}$ and $\{\lambda_j(\Omega,p)\}_{j\geq1}$ be the
    eigenvalues of \eqref{eq:eige_equation_a} and
    \eqref{eq:eige_lapla}, respectively. Then, for every
    $k \in \mathbb{N} \setminus \{0\}$,
\begin{equation}\label{limit_eigen_conv}
  \lambda_k^a(\Omega,p)\to \lambda_k(\Omega,p)     \quad\text{as }
  \mathop{\rm dist}(a,\partial\Omega) \to0. 
\end{equation}
\end{proposition}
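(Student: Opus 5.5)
We will prove the statement by combining a gauge transformation with a min--max comparison, working along an arbitrary sequence $a_n\in\Omega$ with $\mathop{\rm dist}(a_n,\partial\Omega)\to0$. It suffices to show that every subsequence has a further subsequence along which $\lambda_k^{a_n}(\Omega,p)\to\lambda_k(\Omega,p)$ for all $k$.

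\emph{Step 1: gauge reformulation.} For $a\in\Omega$ close to $\partial\Omega$, let $b\in\partial\Omega$ be a nearest boundary point and $\Gamma_a$ the segment from $a$ to $b$. On the simply connected set $\Omega\setminus\Gamma_a$ the phase $e^{i\theta_a/2}$, with $\theta_a$ the angle around $a$, is single valued, and $u\mapsto e^{-i\theta_a/2}u$ is a unitary equivalence. It maps $H^{1,a}(\Omega,\C)$ onto the space $\mathcal H_a$ of functions $v\in H^1(\Omega\setminus\Gamma_a)$ whose traces on the two sides of $\Gamma_a$ are opposite. It also maps the magnetic quadratic form onto $\int_{\Omega\setminus\Gamma_a}|\nabla v|^2$. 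Since the form and the constraint are real, we may work with real functions. The eigenvalues $\lambda_k^a$ are then min--max levels of the Rayleigh quotient $\int|\nabla v|^2/\int pv^2$ over $\mathcal H_a$. Note that $|\Gamma_a|=\mathop{\rm dist}(a,\partial\Omega)\to0$.

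\emph{Step 2: upper bound.} Let $\varphi_1,\dots,\varphi_k$ be $L^2_p$-orthonormal Neumann eigenfunctions. We use the logarithmic cutoff $\eta_a$, equal to $0$ in $B(a,r_a)$ with $r_a=|\Gamma_a|$, equal to $1$ outside $B(a,\sqrt{r_a})$, and given by $\log(|x-a|/r_a)/\log(r_a^{-1/2})$ in between. Its Dirichlet energy is $O(1/|\log r_a|)\to0$, which is the origin of the logarithmic rate. The set $\Omega\setminus B(a,\sqrt{r_a})$ is connected in the limit, so we cannot just use $\eta_a\varphi_j$. Instead we use $v_j=\eta_a\varphi_j e^{-i\theta_a/2}$, i.e.\ $\eta_a\varphi_j$ seen in the magnetic picture. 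On $\{\eta_a\ne0\}$ the magnetic energy equals $|\nabla(\eta_a\varphi_j)|^2+|A_a|^2\eta_a^2\varphi_j^2$ plus a cross term that vanishes for real functions. The term $|A_a|^2=1/(4|x-a|^2)$ is integrable away from $a$, but it is not small near $\partial\Omega$.

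This is the main obstacle. The fix is to use the gauge picture more carefully: choose $\theta_a$ with its jump on $\Gamma_a$ extended so that it exits $\Omega$ inside $B(a,\sqrt{r_a})$. Outside this ball the multivalued phase can then be cut along a curve lying in $\R^2\setminus\Omega$. Concretely, $\Gamma_a$ together with its continuation beyond $b$ lies outside $\Omega$ except inside the ball. Hence $\Omega\setminus \overline{B(a,\sqrt{r_a})}$ admits a single-valued gauge in which $A_a$ is exact, since it lies in a simply connected region of $\R^2\setminus\Gamma'$. For this we need a local Lipschitz graph description near $b$, which holds for $r_a$ small uniformly by the Lipschitz assumption; this is the delicate geometric point. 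In that gauge $v_j=\eta_a\varphi_j$ is admissible: the jump condition on $\Gamma_a$ is vacuous since $\eta_a=0$ near $\Gamma_a$ outside the ball, and inside the ball we take $\eta_a$ to vanish on $\Gamma_a$ by setting it to $0$ on $B(a,\sqrt{r_a})\cap\{\text{near }\Gamma_a\}$. Simplest is to let the cutoff vanish on all of $B(a,r_a^{1/2})\cap\Omega$'s component touching $\Gamma_a$, at a capacity cost $O(1/|\log r_a|)$. The Gram matrix of the $v_j$ converges to the identity and their energies converge to $\lambda_j$. Min--max then gives $\limsup\lambda_k^{a}\le\lambda_k$.

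\emph{Step 3: lower bound.} Take $L^2_p$-orthonormal eigenfunctions $v_1^n,\dots,v_k^n\in\mathcal H_{a_n}$ with bounded energies, by Step 2. Extend them by the $H^1(\Omega\setminus\Gamma_{a_n})$ functions, which have bounded gradients in $L^2(\Omega)$ as distributions off the shrinking slit. Using uniform extension and Rellich arguments away from $\overline{B(b_n,\delta)}$ and a diagonal procedure in $\delta\to0$, we get weak limits $v_j\in H^1(\Omega)$. Points have zero capacity in 2D, so no constraint survives; the $v_j$ are orthonormal by compactness, including near $b$ thanks to the uniform $L^2$ bound of gradients and the $L^\infty$ bound on $p$. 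Lower semicontinuity of energy on the span then yields $\lambda_k\le\liminf\lambda_k^{a_n}$ by min--max.
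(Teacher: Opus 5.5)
Your plan, a min--max upper bound with cut-off test functions in the gauge picture plus a min--max lower bound via limits of eigenfunctions, is a legitimate alternative to the paper's route. The paper instead proves $\|R_a-R\|_{\mathcal L(L^2(\Omega))}\to0$ for the resolvents of the cracked and the Neumann problems, using Proposition~\ref{prop_mosco}, Proposition~\ref{prop_density_0} and compactness, and then quotes Dunford--Schwartz. Once repaired, your route needs neither Proposition~\ref{prop_density_0} nor Dunford--Schwartz, and it gives an explicit one-sided rate $\lambda_k^a\le\lambda_k+O(|\log d_a|^{-1/2})$; the paper's route gives norm-resolvent convergence.

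As written, however, Step~3 has a genuine gap, exactly where the paper has to work. Away from the limit point $\bar b$ of the slits you do get Rellich compactness. But nothing you say prevents a fixed amount of $L^2$-mass of $v_j^n$ from concentrating at $\bar b$. The domains $\Omega\setminus\Gamma_{a_n}$ change with $n$, and you give no uniform extension or Sobolev inequality near the slits; without one, the gradient bound does not by itself exclude concentration. The $L^\infty$ bound on $p$ plays no role here. If mass is lost, the weak limits are not orthonormal, their span may have dimension $<k$, and the lower bound fails. The paper flags exactly this in the remark after Proposition~\ref{prop_mosco}.

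The missing idea is the antisymmetric trace condition defining $\mathcal H_a(\Omega)$. For real $v\in\mathcal H_a(\Omega)$ one has $|\gamma^+v|=|\gamma^-v|$ on the slit, so $|v|\in H^1(\Omega)$ with $\|\nabla|v|\|_{L^2(\Omega)}\le\|\nabla v\|_{L^2(\Omega\setminus\Gamma_a)}$. The paper gets this from the diamagnetic inequality applied to $e^{i\Theta_a^\Omega}v$. Hence $\{|v_j^n|\}_n$ is bounded in $H^1$ of the \emph{fixed} Lipschitz domain $\Omega$, so it is precompact in $L^2(\Omega)$. Together with the a.e.\ convergence from your local compactness and diagonal argument, this gives $\|v_j^n\|_{L^2(\Omega)}\to\|v_j\|_{L^2(\Omega)}$, i.e.\ strong convergence, and the rest of Step~3 then works. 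That the limits lie in $H^1(\Omega)$ follows simply because every $\varphi\in C^\infty_c(\Omega)$ eventually misses the slits; min--max needs no capacity argument.

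Step~2 must be rewritten, because its ``main obstacle'' is spurious. Since $\Gamma_a\subset\overline{B(a,r_a)}$, where $\eta_a\equiv0$, the function $\eta_a\varphi_j\in H^1(\Omega)$ has zero traces on both sides of $\Gamma_a$. So it already lies in $\mathcal H_a(\Omega)$, with energy $\int_\Omega|\nabla(\eta_a\varphi_j)|^2\,dx$. Equivalently, $e^{+i\theta_a/2}\eta_a\varphi_j\in H^{1,a}(\Omega,\C)$ (this is the correct sign by your own Step~1) has magnetic energy density exactly $|\nabla(\eta_a\varphi_j)|^2$. The term $|A_a|^2\eta_a^2\varphi_j^2$ you found comes from dropping the phase.

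Your ``fix'' is unnecessary and partly false. The continuation of $\Gamma_a$ beyond $b$ can re-enter $\Omega$ when $\Omega$ is not convex, even immediately beyond $b$ at a re-entrant corner of a Lipschitz boundary, e.g.\ $\{x_2>-2|x_1|\}$. Also, a cut-off vanishing on $B(a,\sqrt{r_a})\cap\Omega$ is incompatible with $\eta_a=1$ outside that ball. Just use $\eta_a\varphi_j$ (the paper uses the analogous $u(1-\eta_{d_a}(\cdot-P_a))$). Note that $\varphi_j\in L^\infty(\Omega)$ is needed to get $\int_\Omega\varphi_j^2|\nabla\eta_a|^2\,dx=O(1/|\log r_a|)$.

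Finally, Step~1 asserts that $\Omega\setminus\Gamma_a$ is simply connected. This needs, e.g., $\Omega$ simply connected, which is not among the hypotheses; the paper's gauge reduction makes the same tacit assumption. Without it the reduction breaks down: in an annulus with $a$ tending to the inner circle, loops around the hole keep flux $1/2$ and $\lambda_1^a\not\to0$.
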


After establishing spectral stability, we turn to the problem of
quantifying it, i.e., of estimating the vanishing rate of the eigenvalue variation.
A first rough estimate of the eigenvalue variation, which holds for
any eigenbranch and regardless of the direction along which the pole
approaches the boundary, is provided by the following result.
\begin{theorem}\label{t:rough_estimate}
  Under assumptions \eqref{eq:ass-Omega}--\eqref{hp_p}, for every
  $a\in\Omega$, let $\{\lambda_j^a(\Omega,p)\}_{j\geq1}$ and
  $\{\lambda_j(\Omega,p)\}_{j\geq1}$ be the eigenvalues of
  \eqref{eq:eige_equation_a} and \eqref{eq:eige_lapla},
  respectively. Then, for every $k \in \mathbb{N} \setminus \{0\}$,
\begin{equation*}
  |\lambda_k^a(\Omega,p)- \lambda_k(\Omega,p)|=
  O\bigg(\frac1{\sqrt{|\log (\mathop{\rm
        dist}(a,\partial\Omega))|}}\bigg)
  \quad\text{as }
  \mathop{\rm dist}(a,\partial\Omega) \to0. 
\end{equation*}
\end{theorem}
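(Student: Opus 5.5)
Throughout, write $\varepsilon=\mathop{\rm dist}(a,\partial\Omega)$, $\lambda_k=\lambda_k(\Omega,p)$ and $\lambda_k^a=\lambda_k^a(\Omega,p)$. The plan is to compare the two Rayleigh quotients in both directions via min-max, using a logarithmic cut-off at scale $\varepsilon$ near the pole. The upper bound for $\lambda_k^a$ is obtained by transplanting Neumann eigenfunctions to the magnetic problem. The lower bound is obtained by the reverse transplant, after a gauge that removes the magnetic potential away from a slit.

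\emph{Upper bound.} Let $b\in\partial\Omega$ with $|a-b|=\varepsilon$, and let $\Gamma_a$ be the segment from $a$ to $b$. On $\Omega\setminus\Gamma_a$ we have $A_a=\nabla\theta_a/2$, where $\theta_a$ is the angle function, so the phase $e^{i\theta_a/2}$ is a smooth gauge there. It jumps by a sign across $\Gamma_a$.

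Take an $L^2_p$-orthonormal family $u_1,\dots,u_k$ of Neumann eigenfunctions. Let $\eta_\varepsilon$ be the logarithmic cut-off
\[
\eta_\varepsilon(x)=\min\Big\{1,\max\Big\{0,\tfrac{\log(|x-a|/\varepsilon^2)}{\log(1/\varepsilon)}\Big\}\Big\}\quad\text{for }|x-a|\le\varepsilon ,
\]
which vanishes on $B_{\varepsilon^2}(a)$. This does not kill the jump on $\Gamma_a$, because $\Gamma_a$ is the whole segment of length $\varepsilon$. So I would instead cut off on the ball $B_{2\varepsilon}(b)\supset\Gamma_a$, with $\eta=0$ on $B_{2\varepsilon}(b)$, $\eta=1$ outside $B_{\sqrt{\varepsilon}}(b)$, and logarithmic in between. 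This is the standard 2D capacity profile, with
\[
\|\nabla\eta\|_{L^2}^2\le \frac{C}{|\log\varepsilon|}.
\]
Set $v_j=e^{i\theta_a/2}\eta u_j$, which lies in $H^{1,a}$. Then
\[
|(i\nabla+A_a)v_j|=|\nabla(\eta u_j)|,
\]
and since the $u_j$ are bounded near $b$ (local boundedness for the Neumann problem on a Lipschitz domain), we get
\[
\int|\nabla(\eta u_j)|^2=\int|\nabla u_j|^2+O\big(\|\nabla\eta\|_{L^2}\big)+O\big(\|\nabla\eta\|_{L^2}^2\big).
\]
The Gram matrix perturbation is $O(\varepsilon)$. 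By min-max this gives
\[
\lambda_k^a\le\lambda_k+\frac{C}{\sqrt{|\log\varepsilon|}}.
\]
The square root comes from the cross term $\int \eta u\nabla\eta\cdot\nabla u$, which is controlled by Cauchy--Schwarz. This explains the rate in the statement.

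\emph{Lower bound.} Take orthonormal magnetic eigenfunctions $\varphi_1^a,\dots,\varphi_k^a$. By Proposition~\ref{prop_spectral_stability}, $\lambda_j^a\le C$. By the diamagnetic inequality, $|\varphi_j^a|\in H^1$ with uniform bounds. Define
\[
w_j=e^{-i\theta_a/2}\eta\varphi_j^a ,
\]
using the same cut-off as above. This function is continuous across $\Gamma_a$ because $\eta=0$ near $\Gamma_a$, so $w_j\in H^1(\Omega,\C)$. Its Dirichlet energy is
\[
\int|(i\nabla+A_a)(\eta\varphi_j^a)|^2 .
\]
Min-max for the Neumann Laplacian (working with real and imaginary parts, or directly in the complex setting) then gives
\[
\lambda_k\le\lambda_k^a+\frac{C}{\sqrt{|\log\varepsilon|}},
\]
provided the error terms are controlled by $\|\nabla\eta\|_{L^2}$.

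\emph{Main obstacle.} The key step is a uniform $L^\infty$ bound on $\varphi_j^a$ in $B_{\sqrt{\varepsilon}}(b)$, independent of $a$. This is needed so that $\int|\varphi_j^a|^2|\nabla\eta|^2\le C\|\nabla\eta\|_{L^2}^2$. I would obtain it by Moser iteration applied to $|\varphi_j^a|$, which is a subsolution: by Kato's inequality, $-\Delta|\varphi|\le\lambda p|\varphi|$ weakly. For the Neumann condition on a Lipschitz boundary, the Moser iteration can be carried out with test functions in $H^1(\Omega)$ without boundary restrictions.

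If this is too delicate, the fallback is to use only $L^q$ bounds for $|\varphi_j^a|$, which follow from the uniform $H^1$ bound and Sobolev embedding. Then
\[
\int|\varphi|^2|\nabla\eta|^2\le\|\varphi\|_{L^q}^2\,\|\nabla\eta\|_{L^{2q/(q-2)}}^2 .
\]
With a logarithmic profile this last factor is small only like a negative power of $\sqrt{\varepsilon}$ times logarithms, so that route would lose the rate. Hence I would commit to the $L^\infty$ route.
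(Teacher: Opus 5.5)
Your proof is correct, but it follows a different route from the paper's. The paper never transplants eigenfunctions in either direction. In the crack formulation it builds quasi-modes $u-V^u_a$ from the limit eigenfunctions, using the torsion-type corrector $V^u_a$, and applies Colin de Verdi\`ere's Lemma~\ref{lemma:CdV}. Theorem~\ref{t:rough_estimate} is then just \eqref{eq:expa2} combined with $\tau_a=O(|\log d_a|^{-1/2})$; the latter comes from Proposition~\ref{prop_Ee_limit}, which uses the same logarithmic cut-off profile as yours, centred at $P_a$.

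Here is what each approach needs.
\begin{itemize}
\item \emph{The paper.} The lemma yields both inequalities at once and needs no information on the magnetic eigenfunctions. However, its gap hypothesis (H1) requires as input the qualitative convergence of Proposition~\ref{prop_spectral_stability}. That convergence is proved separately, through the Mosco-type Proposition~\ref{prop_mosco} and norm-resolvent convergence. In addition, the paper's bounds on $L^u_a$ and $\mathcal E^u_a$ use $|\nabla u|\in L^\infty(\Omega)$.
\item \emph{Your argument.} The two-sided min-max needs only $u_j\in L^\infty$ for the upper bound, and $\sup_a\|\varphi^a_j\|_{L^\infty(\Omega)}<\infty$ for the lower bound. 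Your route to the latter works. Test the magnetic equation with $\psi\,\varphi/\sqrt{|\varphi|^2+\delta^2}$, $\psi\geq 0$; this stays in $H^{1,a}(\Omega,\C)$. Together with the diamagnetic inequality, this shows that $|\varphi|$ is a weak Neumann subsolution on all of $\Omega$, pole included. A global Moser iteration, resting only on the Sobolev embedding of $H^1(\Omega)$, then gives a bound depending only on $\Omega$, $p$ and an upper bound for $\lambda_k^a$. Your first step already supplies that upper bound, so you do not need Proposition~\ref{prop_spectral_stability} at all; in fact you reprove it with a rate.
\end{itemize}

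What the paper's machinery buys is structure. The same construction gives \eqref{eq:expa1} with the explicit form $r_a$. This is what drives Theorem~\ref{th:simple} and the splitting in Theorem~\ref{t:toward-a-fixed-point}, whereas your argument gives only two-sided bounds.

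One remark on your commentary: the square root comes from your Cauchy--Schwarz step, not from the method. Replace that step with the identity
$\int|(i\nabla+A_a)(\eta\varphi)|^2\,dx=\mathrm{Re}\int(i\nabla+A_a)\varphi\cdot\overline{(i\nabla+A_a)(\eta^2\varphi)}\,dx+\int|\varphi|^2|\nabla\eta|^2\,dx$,
and test the eigenvalue equation with $\eta^2\varphi$; do the same with $\eta u$ in the upper bound. The error then becomes $O(\|\nabla\eta\|_{L^2}^2)+O(\varepsilon)$. So your argument actually gives $|\lambda_k^a-\lambda_k|=O(1/|\log\varepsilon|)$, which is the order attained in Theorem~\ref{t:toward-a-fixed-point}.
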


The proof of the above theorem is based on an asymptotic expansion of
the eigenbranches of \eqref{eq:eige_equation_a}, 
see Theorem \ref{theo_asymp_abstract}, obtained by applying
the \emph{Lemma on small eigenvalues} by Y. Colin de Verdi\`ere
\cite{ColindeV1986} to an equivalent problem derived via a gauge
transformation. This consists of an eigenvalue problem for the
Laplacian with jumping conditions across a crack lying along the segment
joining the moving pole with its projection on the boundary. We refer
to \cite{FNOS_multi} for an analogous formulation for the
Aharonov-Bohm operator with many poles of half-integer circulation. In
the spirit of \cite{FNOS_multi}, the coefficients governing the
expansion are related to a sort of weighted torsional rigidity of the
segment along which the pole is moving, obtained as the minimum of a
functional depending on the position of the pole and on the
eigenfunctions of the limit problem on the crack, see \eqref{eq:E_a}.

In the case of eigenbranches emanating from a simple eigenvalue, the
procedure described above gives more precise information. 
Let
$n \in \N\setminus\{0\}$ be such that
\begin{equation}\label{hp_la_0n_simple}
\lambda_n(\Omega,p) \text{ is simple}
\end{equation}
as an eigenvalue of \eqref{eq:eige_lapla}.  
Let $u_n$ be
an eigenfunction associated to $\lambda_n(\Omega,p)$ normalized so
that 
$\int_\Omega pu_n^2\,dx=1$. We consider the quantity
\begin{equation*}
  \mathcal E^{a,u_n}_{\Omega,p}:=
  \min\left\{
\begin{array}{ll}
   &\hskip-10pt {\displaystyle{\frac12 \int_{\Omega\setminus S_a^\Omega}\left(|\nabla
      v|^2+pv^2\right)\,dx-
    \int_{\Omega\setminus S_a^{\Omega}}\nabla u_n\cdot\nabla
    v\,dx+\lambda_n(\Omega,p)\int_{\Omega}pu_nv\,dx}}:\\[15pt]
& \hskip-10pt   v\in H^1(\Omega\setminus S_a^\Omega), \
\gamma_{a,\Omega}^+(v-u_n)+\gamma_{a,\Omega}^-(v-u_n)=0 \text{ on
                                                         }S_a^\Omega
\end{array}
\right\},
\end{equation*}
see \eqref{eq:E_a}, where $S_a^\Omega$ is the segment joining the pole
$a$ with its projection on $\partial\Omega$ as defined in
\eqref{eq:def-segment}, and $\gamma_{a,\Omega}^+$ and
$\gamma_{a,\Omega}^-$ denote the trace operators on the two sides of
$S_a^\Omega$ , see \eqref{eq:trace}. Let $V^{a,u_n}_{\Omega,p}$ be the
unique minimizer attaining $\mathcal E^{a,u_n}_{\Omega,p}$, see
\eqref{eq:def_Va} and \eqref{eq:eq_Va}.
 
 \begin{theorem}\label{th:simple}
   Let assumptions \eqref{eq:ass-Omega}, \eqref{hp_p}, and
   \eqref{hp_la_0n_simple} be satisfied. Then
\begin{equation}\label{eq:expa-simple}
  \lambda_n^a(\Omega,p) -\lambda_n(\Omega,p)=2
\mathcal E^{a,u_n}_{\Omega,p}+O\left(\|V^{a,u_n}_{\Omega,p}\|_{L^2(\Omega)}^2\right)
  \quad \text{as  }\mathop{\rm
    dist}(a,\partial\Omega)\to 0.
\end{equation}
\end{theorem}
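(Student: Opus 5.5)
The strategy is to reduce to a crack problem via a gauge transformation and then apply the Lemma on small eigenvalues of Colin de Verdière \cite{ColindeV1986} in the simple-eigenvalue case $n$.

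\emph{Step 1: gauge reduction.} Via the gauge transformation $e^{i\theta_a/2}$, with the discontinuity of $\theta_a$ placed along $S_a^\Omega$, problem \eqref{eq:eige_equation_a} becomes the weighted Neumann Laplacian on $\Omega\setminus S_a^\Omega$. The transformed eigenfunctions satisfy the antisymmetric jump conditions $\gamma^+_{a,\Omega}(w)+\gamma^-_{a,\Omega}(w)=0$ and $\gamma^+(\partial_\nu w)-\gamma^-(\partial_\nu w)=0$ across the crack. The spectrum is unchanged. Let $\mathcal H_a$ be the closed subspace of $H^1(\Omega\setminus S_a^\Omega)$ defined by the first condition. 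Then $\lambda_j^a$ are the min-max values of $q(w)=\int|\nabla w|^2$ relative to $\int p w^2$ on $\mathcal H_a$.

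\emph{Step 2: quasi-eigenfunction and the equation for $V_a$.} Set $V_a:=V^{a,u_n}_{\Omega,p}$ and $\Phi_a:=u_n-V_a$. The constraint gives $\gamma^+(u_n-V_a)+\gamma^-(u_n-V_a)=0$, so $\Phi_a\in\mathcal H_a$. Expanding the Euler–Lagrange equation \eqref{eq:eq_Va} of $V_a$ and using that $u_n$ is smooth across the crack, one obtains for all $w\in\mathcal H_a$
\begin{equation*}
\int_{\Omega\setminus S_a^\Omega}\nabla\Phi_a\cdot\nabla w\,dx-\lambda_n(\Omega,p)\int_\Omega p\Phi_a w\,dx=\int_\Omega p\,(V_a-\lambda_n(\Omega,p)V_a)\,w\,dx+\ldots,
\end{equation*}
that is, $\Phi_a$ is an approximate eigenfunction with error controlled by $\|V_a\|_{L^2}$. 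Testing the equation with $v=V_a$ yields the identity $\mathcal E^{a,u_n}_{\Omega,p}=-\frac12\int(|\nabla V_a|^2+pV_a^2)$. Combined with $\int p u_n^2=1$, this gives
\begin{equation*}
q(\Phi_a)-\lambda_n\int p\Phi_a^2=2\mathcal E^{a,u_n}_{\Omega,p}+O(\|V_a\|_{L^2}^2).
\end{equation*}
In this computation the cross terms $\int\nabla u_n\cdot\nabla V_a-\lambda_n\int pu_nV_a$ are to be rewritten using the minimization identity. I also need $\|V_a\|_{H^1}\to0$ as $\mathop{\rm dist}(a,\partial\Omega)\to0$. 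This should follow from the smallness of the crack's capacity, by testing $\mathcal E$ with cut-off competitors $u_n(1-\eta_a)$, where $\eta_a$ is a logarithmic cutoff around $S_a^\Omega$. This is the source of the logarithmic rate.

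\emph{Step 3: Lemma on small eigenvalues.} Let $F=\mathrm{span}\{\Phi_a\}$ and $\delta=\|\Phi_a\|^{-1}\sup_{w}|q(\Phi_a,w)-\lambda_n(p\Phi_a,w)|/\|w\|_{\mathcal H_a}$. Proposition \ref{prop_spectral_stability} gives a spectral gap around $\lambda_n$ uniform in $a$. The Colin de Verdière lemma then yields $\lambda_n^a-\lambda_n=\frac{q(\Phi_a)-\lambda_n\int p\Phi_a^2}{\int p\Phi_a^2}+O(\delta^2)$. Since $\int p\Phi_a^2=1+O(\|V_a\|_{L^2})$ and, by Step 2, the numerator is already $O(\|\nabla V_a\|^2)$, the normalization contributes only higher-order terms. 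This presumably requires the bound $\mathcal E=O(\|V_a\|^2_{H^1})$, together with care in comparing $H^1$ and $L^2$ norms.

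\emph{Main obstacle.} The delicate point is to show that $\delta=O(\|V_a\|_{L^2})$ rather than $O(\|V_a\|_{H^1})$. The gradient part of the residual must cancel exactly by the Euler–Lagrange equation of $V_a$, which is exactly what the weight $p$ and the zero-order term in $\mathcal E$ are designed to achieve. This leaves only the zeroth-order residual $(1+\lambda_n)pV_a$, whose dual norm is bounded by $C\|V_a\|_{L^2}$. With this in place, $\delta^2=O(\|V_a\|_{L^2}^2)$, which gives \eqref{eq:expa-simple}.
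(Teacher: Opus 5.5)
Your overall strategy is the paper's. You apply Lemma \ref{lemma:CdV} on $\mathcal H_a(\Omega)$ to $\tilde q(v,w)=\int_{\Omega\setminus S_a}\nabla v\cdot\nabla w\,dx-\lambda_n\int_\Omega pvw\,dx$, with $F=\mathrm{span}\{\Phi_a\}$ and $\Phi_a=u_n-V_a$. The residual identity is exact: $\tilde q(\Phi_a,w)=(\lambda_n+1)\int_\Omega pV_aw\,dx$ for all $w\in\mathcal H_a(\Omega)$. There are no further terms, and the coefficient is $1+\lambda_n$, not $1-\lambda_n$ as in your Step 2 display. This gives $\delta=O(\|V_a\|_{L^2(\Omega)})$, provided $\delta$ is normalized by $\|w\|_{L^2(\Omega,p)}$, as the lemma requires, rather than by $\|w\|_{\mathcal H_a}$.

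However, the identity $\mathcal E^{a,u_n}_{\Omega,p}=-\frac12\int(|\nabla V_a|^2+pV_a^2)\,dx$ in Step 2 is false. Since $\gamma^+_{a,\Omega}(V_a)+\gamma^-_{a,\Omega}(V_a)=2u_n$ on $S_a$, we have $V_a\notin\mathcal H_a(\Omega)$, so \eqref{eq:eq_Va} cannot be tested with $V_a$. For a counterexample, take $n=1$ and $u_1$ constant: then $L^{a,u_1}_\Omega\equiv0$ and $\mathcal E^{a,u_1}_{\Omega,p}=\frac12 q_a(V_a,V_a)>0$, the opposite sign. Your resulting formula for $\tilde q(\Phi_a,\Phi_a)$ is nonetheless correct, for a different reason. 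Expanding and using only $\tilde q(u_n,u_n)=0$ and $\tilde q(u_n,V_a)=L^{a,u_n}_\Omega(V_a)$, one gets
\[
\tilde q(\Phi_a,\Phi_a)=q_a(V_a,V_a)-2L^{a,u_n}_\Omega(V_a)-(\lambda_n+1)\int_\Omega pV_a^2\,dx=2\mathcal E^{a,u_n}_{\Omega,p}-(\lambda_n+1)\int_\Omega pV_a^2\,dx.
\]

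The genuine gap is the normalization in Step 3. Dividing by $\int_\Omega p\Phi_a^2\,dx=1+O(\|V_a\|_{L^2})$ produces an error $\tilde q(\Phi_a,\Phi_a)\cdot O(\|V_a\|_{L^2})$. So an $O(\|V_a\|_{L^2}^2)$ remainder requires $\tilde q(\Phi_a,\Phi_a)$, equivalently $\mathcal E^{a,u_n}_{\Omega,p}$, to be $O(\|V_a\|_{L^2})$. The bound $\mathcal E=O(\|V_a\|^2_{H^1})$ you propose came from the false identity, and in any case only gives an error $O(\|V_a\|^2_{H^1}\|V_a\|_{L^2})$. No inequality $\|V_a\|^2_{H^1}\lesssim\|V_a\|_{L^2}$ is available to convert this, so ``care in comparing norms'' does not close the argument.

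The missing step is to test your residual identity with $w=\Phi_a$ itself, which is admissible because $\Phi_a\in\mathcal H_a(\Omega)$. This gives $\tilde q(\Phi_a,\Phi_a)=(\lambda_n+1)\int_\Omega pV_a\Phi_a\,dx=O(\|V_a\|_{L^2})$ by Cauchy--Schwarz. Comparing with the display above yields $2\mathcal E^{a,u_n}_{\Omega,p}=(\lambda_n+1)\int_\Omega pu_nV_a\,dx$, which is Proposition \ref{p:relation_E_r}. This is how the paper obtains $r_a(u_n,u_n)=O(\tau_a)$, hence $\xi^a_1=r_a(u_n,u_n)(1+O(\tau_a))=r_a(u_n,u_n)+O(\tau_a^2)$ with $\tau_a=O(\|V_a\|_{L^2})$.

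Two smaller corrections. First, the admissible competitor for $\mathcal E$ is $u_n\eta_{d_a}(\cdot-P_a)$, not $u_n(1-\eta)$. The latter violates $v-u_n\in\mathcal H_a(\Omega)$; it is the quasi-mode, not the potential. Second, $\|V_a\|_{H^1}\to0$ needs, besides the upper bound on $\mathcal E$, the dual-norm estimate on $L^{a,u_n}_\Omega$ of Proposition \ref{prop_La}.
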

We refer to Corollary \ref{cor:simple} for a more complete statement
of the above result, also dealing with the behaviour of
eigenfunctions.  We mention that, in the simple case, expansion
\eqref{eq:expa-simple} can be obtained using an alternative and
equivalent approach due to \cite{BS_eigen}, where a broad class of
abstract perturbative problems are treated.

Our final result addresses the case in which the pole approaches a
fixed point on the boundary, under stronger regularity assumptions on
the domain $\Omega$. It provides an asymptotic expansion of the
eigenbranches emanating from a possibly multiple eigenvalue, and
yields a splitting property, consisting in the separation of the
largest branch from the others, with a consequent decrease in the
multiplicity of all eigenvalues.

\begin{theorem}\label{t:toward-a-fixed-point}
  Let $\Omega\subset \R^2$ be a planar Jordan domain with
  $C^{1,\alpha}$ boundary, for some $\alpha\in(0,1)$. Let $p$ satisfy
  \eqref{hp_p}.  Let $n,m \in \N\setminus\{0\}$ be such that
  $\lambda_n(\Omega,p)$ has multiplicity $m$ as an eigenvalue of
  \eqref{eq:eige_lapla}.  Let $\{u_{n}, u_{n+1},\dots, u_{n+m-1}\}$ be
  a basis of the associated $m$-dimensional eigenspace such that
  $\int_\Omega p\,u_{n+i-1}^2dx=1$ for all $1\leq i\leq m$ and
  $\int_\Omega p\,u_{n+i-1}u_{n+j-1}dx=0$ if $i\neq j$.
Let  $a_0\in \partial\Omega$. Then 
\begin{equation*}
\lambda_{n+m-1}^{a}(\Omega,p) =\lambda_n (\Omega,p)+
\frac{\pi\big(\sum_{i=1}^mu_{n+i-1}^2(a_0)\big)}{|\log \mathop{\rm
    dist}(a,\partial\Omega)|}+o\left(\frac{1}{|\log \mathop{\rm
    dist}(a,\partial\Omega)|}\right)\quad\text{as }a\to a_0.
\end{equation*}
Furthermore, if $m>1$, 
\begin{equation*}
\lambda_{n+i-1}^{a}(\Omega,p) =\lambda_n (\Omega,p)+o\left(\frac{1}{|\log \mathop{\rm
    dist}(a,\partial\Omega)|}\right)\quad\text{as }a\to a_0,
\end{equation*}
for every $1\leq i\leq m-1$.
\end{theorem}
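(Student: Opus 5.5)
The plan is to use the abstract expansion of Theorem \ref{theo_asymp_abstract}, which comes from the Lemma on small eigenvalues applied to the gauge-transformed crack problem. For the $m$ eigenbranches $\lambda_{n+i-1}^a(\Omega,p)$, $1\le i\le m$, it gives
\[
\lambda_{n+i-1}^a(\Omega,p)=\lambda_n(\Omega,p)+\mu_i^a+o(\|\mathcal M_a\|).
\]
Here $\mu_1^a\le\dots\le\mu_m^a$ are the eigenvalues of the $m\times m$ symmetric matrix $\mathcal M_a$. Its diagonal entries are $2\mathcal E^{a,u_{n+i-1}}_{\Omega,p}$, and its off-diagonal entries are obtained by polarization of the quadratic form $u\mapsto 2\mathcal E^{a,u}_{\Omega,p}$ on the eigenspace. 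By linearity of the minimization problem in the datum $u$, one has $\mathcal E^{a,u}_{\Omega,p}=\mathcal E^{a,\sum c_iu_{n+i-1}}$, a quadratic form in $c\in\R^m$. The remainder is of order $\max_i\|V^{a,u_{n+i-1}}_{\Omega,p}\|^2_{L^2}$. So the whole proof reduces to an asymptotic computation of this quadratic form as $a\to a_0$.

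The key claim is that, for every $u$ in the eigenspace,
\[
2\mathcal E^{a,u}_{\Omega,p}=\frac{\pi u(a_0)^2}{|\log \mathop{\rm dist}(a,\partial\Omega)|}+o\Big(\frac1{|\log\mathop{\rm dist}(a,\partial\Omega)|}\Big),
\]
and that $\|V^{a,u}_{\Omega,p}\|_{L^2}^2=o(1/|\log \mathop{\rm dist}(a,\partial\Omega)|)$. Granting this, the matrix is
\[
\mathcal M_a=\frac{\pi}{|\log d|}\,w w^T+o(1/|\log d|),\qquad w=(u_n(a_0),\dots,u_{n+m-1}(a_0)),
\]
with $d=\mathop{\rm dist}(a,\partial\Omega)$. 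The matrix $ww^T$ has rank at most one, its nonzero eigenvalue is $|w|^2=\sum_i u_{n+i-1}^2(a_0)$, and eigenvalue $0$ has multiplicity $m-1$. Continuity of eigenvalues of symmetric matrices then gives both statements of the theorem.

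To prove the claim, I would first reformulate the minimization. Writing $v=u+W$, the jump condition says that $W$ is odd across the crack $S_a^\Omega$. Since $\gamma^++\gamma^-$ of $W$ equals $-2u$ on $S_a^\Omega$, $W$ has prescribed "average" $-u$ on the segment. The energy is then, up to sign, $\frac12$ times the minimal weighted $H^1$-capacity-type energy of this crack datum, so $\mathcal E$ is essentially a torsional-rigidity or capacity of a segment of length $d$ touching the boundary, with datum $u\approx u(a_0)$.

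Next, I would flatten the boundary near $a_0$ using the $C^{1,\alpha}$ regularity. Locally this reduces to a half-plane with a vertical segment of length $d$ from the boundary, under Neumann conditions. Reflecting across the boundary turns this into the whole plane with a segment of length $2d$. In elliptic coordinates the capacity of a segment of length $2d$ relative to a disk of radius $r$ is explicit: $2\pi/\log(r/d)+\dots$, which is where the paper's elliptic coordinates enter. Halving for the half-plane and tracking the factor conventions gives $\pi u(a_0)^2/|\log d|$.

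Rigorous upper and lower bounds would come from two comparisons. For the upper bound, I would use an explicit competitor: a logarithmic cutoff in elliptic coordinates times $u$. For the lower bound, I would use the dual (Dirichlet principle) formulation, or test the Euler–Lagrange equation \eqref{eq:eq_Va} against a suitable function. The $L^2$ bound on $V$ follows since this competitor has $L^2$ norm $O(1/|\log d|)$, which is $o$ of the energy scale once squared.

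I expect the main obstacle to be the sharp lower bound on the energy with the exact constant $\pi$. This requires controlling the error from the $C^{1,\alpha}$ flattening, including the distortion of the segment's direction and the boundary curvature, from the variation of $u$ near $a_0$, and from the weight $p$ in the zero-order term. Each of these must be shown to contribute only $o(1/|\log d|)$, and they must do so uniformly enough to handle the off-diagonal entries by polarization.
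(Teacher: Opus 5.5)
Your reduction is the paper's: Theorem~\ref{theo_asymp_abstract} with Proposition~\ref{p:relation_E_r}, polarization as in Proposition~\ref{p:asy-r}, and the rank-one matrix $ww^T$. So everything rests on your ``key claim'', which is Proposition~\ref{p:asyE} together with Proposition~\ref{p:Phi1asd}, and that is where the proposal has a genuine gap.

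\textbf{Upper bound.} This part is fine. A logarithmic cutoff $\phi$ equal to $1$ on the segment gives a competitor $u\phi\in H^1(\Omega)$, on which $L^{a,u}_\Omega$ vanishes. You need the outer radius $R\to0$ to satisfy $|\log R|=o(|\log d|)$.

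\textbf{Lower bound.} The sharp lower bound is not supplied, and the heuristic behind it does not apply. A capacity ``relative to a disk of radius $r$'' presupposes a Dirichlet condition at distance $r$. Here the outer condition is Neumann, so for constant datum $v\equiv\kappa$ is admissible with zero Dirichlet energy. Only the zero-order term $\int pV^2$ forces decay. Moreover, only the mean $\frac12(\gamma^+ + \gamma^-)$ of $V$ is prescribed on $S_a$, while the jump is free, so $V$ is not a priori a capacitary potential. Your reformulation conflates the two: $W=v-u\in\mathcal H_a$ has $\gamma^+W+\gamma^-W=0$; it is $v$ whose trace mean is $u$. ``Dual formulation'' or ``testing \eqref{eq:eq_Va}'' would need an almost optimal supersolution for $-\Delta+p$ with Neumann conditions, which is precisely the difficulty.

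\textbf{The $L^2$ estimate.} Your $L^2$ argument is a non sequitur. A competitor with small $L^2$ norm says nothing about the minimizer unless it is already known to be almost optimal. Only then does $J(\phi)-J(V)=\frac12 q(\phi-V,\phi-V)$ transfer the information, so this too depends on the missing lower bound. The paper instead proves $\|V\|_{L^2}=o(\|\nabla V\|_{L^2})$ directly by compactness (Proposition~\ref{p:negli}).

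\textbf{Flattening.} Flattening the crack problem by a generic $C^{1,\alpha}$ chart would bend the crack and introduce variable coefficients. The paper avoids this by applying the conformal map to the \emph{magnetic} problem (Proposition~\ref{prop_conf}). It then re-gauges in $\Omega_s$ with a straight segment orthogonal to the flat boundary, which is allowed because the eigenvalues do not depend on where the cut is made.

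\textbf{How the paper gets the lower bound.} It reduces to the constant datum $\kappa=w(0)$ (Proposition~\ref{p:confronto-costante}). It then uses monotonicity in the weight (Remark~\ref{rem:comparison-weight}) and in the domain (valid since $L^{a,\kappa}_{\Omega}\equiv0$) to sandwich between translated half-ellipses $E_{\e}(L_1)\subset\Omega_s\subset E_{\e}(L_2)$. Finally it computes explicitly with Bessel functions for the separable weight $\rho_\e$, and compares with constant weight in Proposition~\ref{prop_WZ}.

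\textbf{A possible direct fix.} If you want to keep your route, one missing ingredient could be the following. After flattening and re-gauging, replace $V$ in a small half-disk around $P_a$ by its even part with respect to the line of the segment. This function has no jump, equals $u$ on $S_a$, and has no larger Dirichlet energy. Then compare half-circle averages at two radii. At radius $\sim d$ the average is close to $u(a_0)$, by a rescaled Poincar\'e inequality, since the energy tends to $0$. At a suitable radius $R$ with $|\log R|=o(|\log d|)$ the average is small, since $\|V\|^2_{L^2}=O(1/|\log d|)$ by \eqref{eq:Vato0}. Cauchy--Schwarz in $\log r$ between these two radii gives the bound. As written, though, the proposal contains no argument of this kind.
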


In particular, Theorem~\ref{t:toward-a-fixed-point} shows that, if
$a_0 \in \partial \Omega$ is such that at least one eigenfunction of
problem \eqref{eq:eige_lapla} associated with the eigenvalue
$\lambda_n (\Omega,p)$ does not vanish at $a_0$ (and this holds at all
boundary points, with at most finitely many exceptions, see Remark
\ref{rem:cf}), then, for $a$ sufficiently close to $a_0$, the
eigenvalue $\lambda_{n+m-1}^{a}(\Omega,p)$ is simple and the
eigenvalues $\{\lambda_{n+i-1}^{a}(\Omega,p)\}_{i<m}$ have
multiplicity strictly smaller than $m$. Thus,
Theorem~\ref{t:toward-a-fixed-point} describes a splitting phenomenon
for a multiple eigenvalue, in the spirit of the analogous result
proved in \cite{flucher} for the Dirichlet Laplacian in domains with
small holes; see also \cite{FLO_2} for the Neumann Laplacian in
perforated sets.

The proof of Theorem \ref{t:toward-a-fixed-point} relies on a detailed
analysis of the asymptotic behavior of the quantities
$\mathcal E^{a,u_{n+i-1}}_{\Omega,p}$. It is at this stage that the
main technical difficulties of the present paper arise, as the methods
developed for the Dirichlet case do not seem to be applicable in the
Neumann setting.  The main techniques adopted in the literature to
derive asymptotics of Aharonov--Bohm eigenvalues under Dirichlet
boundary conditions are blow-up arguments for eigenfunctions or for
torsion-like functions (see, e.g.,
\cite{AFaharonov,AFsiam,AFNN2017,FNOS_multi}), or, alternatively, the
explicit computation of torsion-like functions or capacitary potentials, carried
out through a change to elliptic coordinates (see, e.g.,
\cite{AFHL,FNOS_multi}).  As far as blow-up analysis is concerned, it
is effective when Hardy-type functional inequalities are available, as
they allow one to identify a concrete functional space in which the
limit of the blow-up sequence lives. This occurs, for instance, in the
case - treated in \cite{AFaharonov,AFsiam,FNOS_multi} - of a single
pole or of an odd number of poles with circulation $1/2$ colliding at
an interior point of the domain, or in the case of a pole approaching
a boundary where Dirichlet conditions are imposed, as in
\cite{AFNN2017}. However, this approach is not effective for an even
number of poles colliding in the interior of the domain, as well as for
a pole approaching a boundary with Neumann conditions, due to the lack
of a concrete functional characterization of the blow-up profile.  In
the case of two poles colliding in the interior under Dirichlet
boundary conditions, an alternative approach based on the use of
elliptic coordinates was developed in \cite{AFHL}; see also
\cite{FNOS_multi}. However, for a Neumann problem, the presence of a
zero-order term in the operator - required to recover the coercivity of
the associated quadratic form \eqref{eq:qf}- causes a singular
potential to appear when rewriting the problem in elliptic
coordinates, leading to considerably greater difficulties compared to
the Dirichlet case. In this work, we address this new and challenging
difficulty by introducing an intermediate auxiliary problem, see
\eqref{prob_Wj}, for which the potential arising from the change to
elliptic coordinates are more easily manageable, and then showing that
this problem provides a good approximation of the original one for our
purposes, see Proposition \ref{prop_WZ}.

The paper is organized as follows. In section \ref{sec_preliminaries}
we present some preliminary results, including an equivalent
formulation of problem \eqref{eq:eige_equation_a} in terms of the
Laplacian on a fractured domain obtained via a gauge transformation,
as well as a Mosco convergence result for Sobolev spaces in domains
with a crack along the segment connecting the pole to its projection
on the boundary, see Proposition \ref{prop_mosco}. Section
\ref{sec:spectral-stability} is devoted to the proof of the spectral
stability stated in Proposition \ref{prop_spectral_stability}.  In
section \ref{sec_eigen_expansion} we prove an asymptotic expansion of
the eigenvalue variation as the pole approaches the boundary, see
Theorem \ref{theo_asymp_abstract}, from which we then deduce Theorem
\ref{t:rough_estimate} and, in the simple case, Theorem
\ref{th:simple}. In section \ref{sec_asymptoic_precise} we prove
Theorem \ref{t:toward-a-fixed-point}.  For the sake of completeness,
in the final appendix \ref{sec_appendix}, we collect some known results used in the paper:
the \emph{Lemma on small eigenvalues} by Y. Colin de Verdi\`ere and
the well-known \emph{diamagnetic inequality}.

\section{Preliminaries}\label{sec_preliminaries}

\subsection{An equivalent formulation via gauge transformation}
For every $a\in\Omega$, we denote as
\begin{equation*}
  d_a^\Omega=\mathop{\rm dist}(a,\partial\Omega)  
\end{equation*}
the distance of $a$ from the boundary $\partial\Omega$. We observe
that, for every $a\in\Omega$, there exists some point
$P_a^\Omega\in \partial\Omega$ (possibly not unique) such that
\begin{equation*}
    d_a^\Omega=|a-P_a^\Omega|.
\end{equation*}
Let 
\begin{equation}\label{eq:def-segment}
    S_a^\Omega=\{ta+(1-t)P_a:t\in[0,1]\}
\end{equation}
be the segment with endpoints $a$ and $P_a^\Omega$ and let 
$\omega_a^\Omega\in [0,2\pi)$ be its slope, so that 
\begin{equation*}
    P_a^\Omega=a+d_a^\Omega(\cos\omega_a^\Omega,\sin\omega_a^\Omega),
\end{equation*}
see Figure \ref{f:notation}. It can be easily verified that 
\begin{equation*}
  S_a^\Omega\setminus\{P_a^\Omega\}\subset\Omega.
\end{equation*}
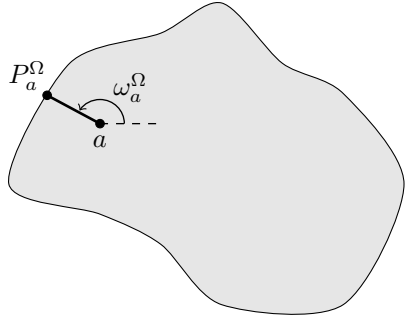
\begin{figure}[ht]
\centering
\begin{tikzpicture}[scale=0.8]
%\draw[gray,very thin] (-4,-3) grid (4,4);
 \filldraw[fill=gray!20]  plot [smooth cycle]
 coordinates {(3,0) (2,1.5) (1,2) (0,3) (-1,2.5) (-2.5,2) (-3.5,0)
   (-2,-0.5) (-1,-1) (0,-2) (2,-2)};
 \draw [fill=black] (-2,1) circle (2pt);
 \draw [fill=black] (-2.87,1.47) circle (2pt);
   \draw [line width=1pt](-2,1)--(-2.87,1.47);
\draw [line width=0.5pt,dashed](-2,1)--(-1,1);
 \draw[color=black] (-3.2,1.8) node {$P_a^\Omega$};
 \draw[color=black] (-2,0.7) node {$a$};
 \draw[->,line width=0.5pt] (-1.6,1) arc (0:147:0.4);
 \draw[color=black] (-1.5,1.6) node {$\omega_a^\Omega$};
 \end{tikzpicture}
 \caption{Definition of $P_a^\Omega$ and $\omega_a^\Omega$.}
 \label{f:notation}
\end{figure}
In the spirit of \cite{FNOS_multi}, we perform a gauge transformation,
which makes \eqref{eq:eige_equation_a} equivalent to an eigenvalue
problem for the Neumann Laplacian in $\Omega$ with a straight crack
along the segment $S_a^\Omega$.  For any $a\in \Omega$, we let
\begin{equation*}
\nu_a^\Omega:=(-\sin\omega_a^\Omega,\cos\omega_a^\Omega)
\end{equation*}
and  consider the half-planes
\begin{equation*}
    \Pi_{a,\Omega}^+=\{x\in \R^2:x\cdot \nu_a^\Omega>0\},
    \quad \Pi_{a,\Omega}^-=\{x\in \R^2:x\cdot \nu_a^\Omega<0\}.
\end{equation*}
By embedding theorems for fractional Sobolev spaces in dimension $1$
and classical trace results, for every $a\in\Omega$ and
$q \in [2, +\infty)$ there exist continuous trace operators
\begin{equation}\label{eq:trace}
\gamma_{a,\Omega}^+:H^1(\Pi_{a,\Omega}^+)\to L^q(\Sigma_a^\Omega), 
\quad
\gamma_{a,\Omega}^-:H^1(\Pi_{a,\Omega}^-)\to L^q(\Sigma_a^\Omega), 
\end{equation}
where
$\Sigma_a^\Omega=\partial \Pi_{a,\Omega}^+=\partial \Pi_{a,\Omega}^-$.
For a function $u\in H^1(\Omega\setminus \Sigma_a^\Omega)$, we will
simply write $\gamma_{a,\Omega}^+(u)$ and $\gamma_{a,\Omega}^-(u)$ to
indicate $\gamma_{a,\Omega}^+(u\big|_{\Pi_{a,\Omega}^+})$ and
$\gamma_{a,\Omega}^-(u\big|_{\Pi_{a,\Omega}^-})$, respectively.

Since the Aharonov-Bohm vector field \eqref{eq:ABfield} is
irrotational, it is found to be the gradient of some scalar potential
function in simply connected domains, e.g., in the complement of the
straight half-line
$\Gamma_a^\Omega=\{t(\cos\omega_a^\Omega,\sin\omega_a^\Omega):t\geq0\}$
emanating from the pole $a$ with slope $\omega_a^\Omega$. More
precisely, we consider the function
\begin{equation*}
\Theta_a^\Omega: \R^2\setminus\{a\}\to \R,\quad 
\Theta_a^\Omega(a+(r\cos t,r\sin t))=
    \begin{cases}
       \frac 12 t ,&\text{if }t\in[0,\omega_a^\Omega),\\[3pt]
       \frac12 t-\pi,&\text{if }t\in[\omega_a^\Omega, 2\pi),
    \end{cases}
  \end{equation*}
and observe that $\Theta_a^\Omega\in C^\infty(\R^2\setminus\Gamma_a^\Omega)$ and  
$\nabla \Theta_a^\Omega$ can be smoothly extended to be in
$C^\infty(\R^2\setminus\{a\})$ with
\begin{equation*}
  \nabla\Theta_a^\Omega=A_a\quad \text{in }\R^2\setminus\{a\}.
\end{equation*}
In view of \cite[Lemma 3.3]{HHOO99}, every
eigenspace of \eqref{eq:eige_equation_a} admits a basis
made of eigenfunctions $\varphi$ satisfying 
\begin{equation}\label{eq:propertyP}\tag{$K$}
 e^{-i\frac t2}\varphi(a+(r\cos t, r\sin t))\text{ is a real-valued function};
\end{equation}
see also \cite{BNH2017}.
Furthermore,
each eigenfunction
$u\in H^{1,a}(\Omega,\C)\setminus\{0\}$ of \eqref{eq:eige_equation_a}
 satisfying \eqref{eq:propertyP} corresponds, via the gauge transformation
\begin{equation}\label{eq:gauge}
  u(x) \mapsto v(x):= e^{-i\Theta_a^\Omega(x)}u(x) , \quad x\in
  \Omega\setminus\Gamma_a^\Omega,
\end{equation}
to a real valued function $v\in\mathcal H_a(\Omega)\setminus\{0\}$ solving
the following problem 
\begin{equation}\label{eq:eige_a}\tag{$P^a_{\Omega,p}$}
    \begin{cases}
        -\Delta v = \lambda p v, &\text{in }\Omega\setminus S_a^\Omega,\\
        \frac{\partial v}{\partial \nu}=0, &\text{on }\partial\Omega,\\
        \gamma_{a,\Omega}^+(v) + \gamma_{a,\Omega}^+(v)=0, &\text{on }S_a^\Omega,\\
        \gamma_{a,\Omega}^+(\nabla v\cdot \nu_a^\Omega) + \gamma_{a,\Omega}^+(\nabla
        v\cdot\nu_a^\Omega)=0,
        &\text{on }S_a^\Omega,
    \end{cases}
  \end{equation}
  where the functional space $\mathcal H_a(\Omega)$ is defined as
\begin{equation}\label{eq:defHa}
  \mathcal H_a(\Omega)=\{v\in H^1(\Omega\setminus S_a^\Omega):
  \gamma_{a,\Omega}^+(v)+\gamma_{a,\Omega}^-(v)=0
\text{ on }S_a^\Omega\}.
\end{equation}
In view of the Hardy type inequality proved in \cite{LW} (see also
\cite[Lemma 3.1, Remark 3.2]{FFT}), the embedding
$H^{1,a}(\Omega,\C) \hookrightarrow H^{1}(\Omega,\C)$ is continuous;
since $\Omega$ is a Lipschitz domain, it follows that the embedding
$H^{1,a}(\Omega,\C) \hookrightarrow L^2(\Omega,\C)$ is compact, so
that the gauge correspondence \eqref{eq:gauge} implies that, for every
$a\in \Omega$,
\begin{equation}\label{eq:compact}
  \text{the embedding }\mathcal H_a(\Omega) \hookrightarrow \hookrightarrow
  L^2(\Omega)\text{ is compact}.
\end{equation}
Problem \eqref{eq:eige_a} admits the following weak formulation:
$v\in H^1(\Omega\setminus S_a^\Omega)$ weakly solves \eqref{eq:eige_a} if
\begin{equation*}
  v\in\mathcal H_a(\Omega)\quad\text{and}\quad
  \int_{\Omega\setminus S_a^\Omega}\nabla v\cdot
  \nabla w\,dx=\lambda\int_\Omega p vw\,dx\quad\text{for all }w\in 
  \mathcal H_a(\Omega).
\end{equation*}
Hence problems \eqref{eq:eige_equation_a} and \eqref{eq:eige_a} are
spectrally equivalent: indeed, they have the same eigenvalues and 
the eigenfunctions correspond to each other through the gauge 
transformation \eqref{eq:gauge}, 
once those of \eqref{eq:eige_equation_a} are chosen to
satisfy \eqref{eq:propertyP}.

\subsection{Mosco type convergence of the complements of segments}

Let
\begin{equation}\label{eq:a_n}
  \{a_k\}_{k\in\N}\subset\Omega \text{ be such that
    $\lim_{k\to\infty}\mathop{\rm
  dist}(a_k,\partial\Omega)=0$},
\end{equation}
 i.e. $\{a_k\}_{n\in\N}$ is a sequence of points in
 $\Omega$ approaching $\partial\Omega$. 
Letting
\begin{equation}\label{eq:K}
  K=\bigcup_{k\in\N}S_{a_k}^\Omega,
\end{equation}
we observe that the $2$-dimensional Lebesgue measure of $K$ is zero;
furthermore, it can be easily verified that $\Omega\setminus K$ is an
open set.

\begin{proposition}\label{prop_mosco}
  Let $\{a_k\}_{k\in\N}$ and $K$ be as in \eqref{eq:a_n} and
  \eqref{eq:K}, respectively. If
  $\{v_k\}_k\subset H^1(\Omega \setminus K)$ and
  $v\in H^1(\Omega \setminus K)$ are such that
  $v_k\in H^1(\Omega \setminus S_{a_k}^\Omega)$ for every $k\in \N$ and
  $v_k\rightharpoonup v$ weakly in $H^1(\Omega \setminus K)$ as
  $k\to\infty$, then
\begin{equation}\label{eq:mosco}
  v\in H^1(\Omega).
\end{equation}
In addition, by assuming $v_k\in \mathcal H_{a_k}(\Omega)$ for every
$k\in \N$ along with the previous hypotheses, we have
\begin{equation}\label{eq:comp}
  v_k\to v \text{ strongly in }L^2(\Omega)\text{  as
  $k\to\infty$}.
\end{equation}
\end{proposition}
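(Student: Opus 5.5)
To prove \eqref{eq:mosco}, I will show that the distributional gradient of $v$ in $\Omega$ has no singular part concentrated on $K$. Since $|K|=0$, $v$ and $\nabla v$ (the gradient computed in $\Omega\setminus K$) define $L^2(\Omega)$ functions. It therefore suffices to prove that, for every $\phi\in C_c^\infty(\Omega,\R^2)$,
\begin{equation*}
\int_{\Omega} v\,\mathrm{div}\,\phi\,dx=-\int_{\Omega\setminus K}\nabla v\cdot\phi\,dx .
\end{equation*}

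Fix such a $\phi$. Since $\mathop{\rm dist}(a_k,\partial\Omega)\to0$ and each $S_{a_k}^\Omega$ has length $d_{a_k}^\Omega$, there is $k_0$ such that $S_{a_k}^\Omega\cap \mathrm{supp}\,\phi=\emptyset$ for all $k\ge k_0$. Indeed, every point of $S_{a_k}^\Omega$ lies within $d_{a_k}^\Omega$ of $P_{a_k}^\Omega\in\partial\Omega$, while $\mathrm{supp}\,\phi$ has positive distance from $\partial\Omega$.

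For $k\ge k_0$, the function $v_k$ belongs to $H^1(\Omega\setminus S_{a_k}^\Omega)$, and $\mathrm{supp}\,\phi\subset\Omega\setminus S_{a_k}^\Omega$, which is open. Hence
\begin{equation*}
\int v_k\,\mathrm{div}\,\phi\,dx=-\int\nabla v_k\cdot\phi\,dx .
\end{equation*}
Here $\nabla v_k$ is the weak gradient in $\Omega\setminus S_{a_k}^\Omega$, which coincides a.e. with the gradient in $\Omega\setminus K$. Weak convergence in $H^1(\Omega\setminus K)$ gives $v_k\rightharpoonup v$ and $\nabla v_k\rightharpoonup\nabla v$ weakly in $L^2$, so we can pass to the limit on both sides and obtain the identity above. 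This proves $v\in H^1(\Omega)$.

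For \eqref{eq:comp}, the difficulty is that $\Omega\setminus K$ need not have a compact embedding into $L^2$, and $\mathcal H_{a_k}(\Omega)$ depends on $k$. I plan to use the gauge transformation. Let $u_k=e^{i\Theta_{a_k}^\Omega}v_k\in H^{1,a_k}(\Omega,\C)$; then $|u_k|=|v_k|$. In $\Omega\setminus S_{a_k}^\Omega$ we have $(i\nabla+A_{a_k})u_k=ie^{i\Theta}\nabla v_k$, so $\|(i\nabla+A_{a_k})u_k\|_{L^2}=\|\nabla v_k\|_{L^2}$ is bounded. The diamagnetic inequality (appendix) then gives $|u_k|\in H^1(\Omega)$ with $\|\nabla|u_k|\|_{L^2}\le\|\nabla v_k\|_{L^2}$. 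Thus $\{|v_k|\}$ is bounded in $H^1(\Omega)$, and since $\Omega$ is Lipschitz, Rellich's theorem shows that $|v_k|$ is precompact in $L^2(\Omega)$.

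It remains to upgrade this to strong convergence of $v_k$ itself. Local compactness handles interior regions: for every $\delta>0$, on $\Omega_\delta=\{x\in\Omega:\mathrm{dist}(x,\partial\Omega)>\delta\}$ the crack $S_{a_k}^\Omega$ is absent for large $k$. So $v_k\in H^1(\Omega_\delta)$ with uniform bounds, and Rellich on a Lipschitz subdomain (or compactly contained open sets exhausting $\Omega$) gives $v_k\to v$ in $L^2_{\rm loc}(\Omega)$.

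Near the boundary, the precompactness of $|v_k|^2$ in $L^1$ provides uniform integrability:
\begin{equation*}
\sup_k\int_{\Omega\setminus\Omega_\delta}v_k^2\,dx\to0\quad\text{as }\delta\to0 ,
\end{equation*}
since a precompact family in $L^1$ is equi-integrable and $|\Omega\setminus\Omega_\delta|\to0$. Combining this tail control with the local convergence, and noting that $v\in L^2(\Omega)$, yields $v_k\to v$ in $L^2(\Omega)$.

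I expect the main obstacle to be the step justifying the use of the diamagnetic inequality. Specifically, one must check that the gauge map sends $\mathcal H_{a_k}(\Omega)$ into $H^{1,a_k}(\Omega,\C)$, using the jump condition $\gamma^++\gamma^-=0$ together with the $-1$ jump of $e^{i\Theta}$ across the crack, and that $|u_k|\in H^1(\Omega)$ with no jump across the crack. This is exactly where the condition $v_k\in\mathcal H_{a_k}(\Omega)$ is needed.
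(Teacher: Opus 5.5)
Your proof is correct. For the first part it is the paper's argument: test with $\varphi\in C^\infty_{\rm c}(\Omega)$, observe that its support misses $S_{a_k}^\Omega$ for $k$ large, and pass to the weak limit.

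For the strong convergence \eqref{eq:comp} you use the same key ingredient as the paper: the gauge transformation plus the diamagnetic inequality, giving a bound for $|v_k|$ in $H^1(\Omega)$. You close the argument differently, though.

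The paper's route is as follows. It extracts a subsequence $a_{k_j}\to\bar a\in\partial\Omega$. It uses compactness on $\Omega\setminus D(\bar a,\e)$, which meets only finitely many cracks and where the regularity of $\partial\Omega$ enters, together with a diagonal argument, to get $v_{k_j}\to v$ a.e. It then identifies the strong $L^2$-limit of $|g_j|$ with $|v|$ and deduces $\|v_{k_j}\|_{L^2(\Omega)}\to\|v\|_{L^2(\Omega)}$. It concludes from weak convergence plus convergence of norms and the Urysohn principle.

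Your route uses only interior compactness on the crack-free sets $\Omega_\delta$, where no boundary regularity is needed. You control the boundary layer $\Omega\setminus\Omega_\delta$ through the equi-integrability of $v_k^2=|v_k|^2$, which follows from the precompactness of $\{|v_k|\}$ in $L^2(\Omega)$. A uniform $L^q(\Omega)$ bound, $q>2$, from Sobolev's embedding would serve equally well.

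Your version is somewhat more economical. It needs no limit point $\bar a$, no diagonal extraction, no pointwise identification of $\lim|g_j|$, and no weak-plus-norm (Radon--Riesz) step. The paper's version, in exchange, gets strong convergence up to the boundary away from $\bar a$ directly from compactness on slit domains.

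The step you single out as the main obstacle is simply asserted in the paper, namely $g_j\in H^{1,a_{k_j}}(\Omega,\C)$. You can bypass the gauge entirely, because all you need is $|v_k|\in H^1(\Omega)$ with $\|\nabla|v_k|\|_{L^2(\Omega)}\le\|\nabla v_k\|_{L^2(\Omega\setminus S_{a_k}^\Omega)}$.
\begin{itemize}
\item Traces commute with the absolute value, so $\gamma^\pm_{a_k,\Omega}(|v_k|)=|\gamma^\pm_{a_k,\Omega}(v_k)|$.
\item The condition $\gamma^+_{a_k,\Omega}(v_k)=-\gamma^-_{a_k,\Omega}(v_k)$ therefore means $|v_k|$ has no jump across the crack.
\item Hence $|v_k|\in H^1(\Omega\setminus\{a_k\})=H^1(\Omega)$, since a point has zero capacity in the plane.
\end{itemize}
This also spares you from checking that the gauge is globally well defined, e.g.\ that the cut of $\Theta_{a_k}^\Omega$ along the half-line through $P_{a_k}^\Omega$ does not re-enter $\Omega$.
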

\begin{proof}
Since $v\in H^1(\Omega \setminus K)$, there exists
    $\mathbf{f}=(f_1,f_2)\in L^2(\Omega,\R^2)$ such that $\nabla v=\mathbf{f}$
    in $\mathcal D'(\Omega\setminus K)$, i.e.
    \begin{equation*}
      \int_{\Omega}v\frac{\partial \varphi}{\partial
        x_i}\,dx=-\int_\Omega f_i\varphi\,dx\quad\text{for all
      }\varphi\in C^\infty_{\rm c}(\Omega\setminus K),\quad i=1,2.
    \end{equation*}
    Furthermore, for every $k\in\N$, since
    $v_k\in H^1(\Omega \setminus S_{a_k}^\Omega)$, there exists
    $\mathbf{f}_k=(f_1^k,f_2^k)\in L^2(\Omega,\R^2)$ such that
    $\nabla v_k=\mathbf{f}_k$ in
    $\mathcal D'(\Omega\setminus S_{a_k}^\Omega)$, i.e.
    \begin{equation*}
      \int_{\Omega}v_k\frac{\partial \varphi}{\partial
        x_i}\,dx=-\int_\Omega f_i^k\varphi\,dx\quad\text{for all
      }\varphi\in C^\infty_{\rm c}(\Omega\setminus S_{a_k}^\Omega),\quad i=1,2.
    \end{equation*}
Let $\varphi\in C^\infty_{\rm c}(\Omega)$. Since $\lim_{k\to\infty}\mathop{\rm
  dist}(a_k,\partial\Omega)=0$, there exists $\bar k$ such that
$\varphi\in C^\infty_{\rm c}(\Omega\setminus S_{a_k}^\Omega)$ for all $k\geq
\bar k$. Then
    \begin{equation*}
      \int_{\Omega}v_k\frac{\partial \varphi}{\partial
        x_i}\,dx=-\int_\Omega f_i^k\varphi\,dx.
    \end{equation*}
    for all $k\geq \bar k$. By the weak convergence
    $v_k\rightharpoonup v$ in $H^1(\Omega \setminus K)$ it follows
    that $v_k\rightharpoonup v$ weakly in $L^2(\Omega)$ and
    $\mathbf{f}_k\rightharpoonup \mathbf{f}$ weakly in
    $L^2(\Omega,\R^2)$, so that, passing to the limit as $k\to\infty$
    in the above identity, we obtain
 \begin{equation*}
      \int_{\Omega}v\frac{\partial \varphi}{\partial
        x_i}\,dx=-\int_\Omega f_i\varphi\,dx
      \quad\text{for all
      }\varphi\in C^\infty_{\rm c}(\Omega),
    \end{equation*}
    thus implying that $\nabla v=\mathbf{f}$ in $\mathcal
    D'(\Omega)$ and proving \eqref{eq:mosco}.

    To prove \eqref{eq:comp}, we make the further assumption that
    $v_k\in \mathcal H_{a_k}(\Omega)$ for every $k\in \N$.  We first
    observe that there exists a subsequence $a_{k_j}$ such that
    $a_{k_j}\to\bar a$ as $j\to\infty$, for some point
    $\bar a\in\partial \Omega$. For every sufficiently small $\e>0$,
    $\Omega\setminus D(\bar a,\e)$ contains only a finite number of
    segments $S_{a_{k_j}}^\Omega$, hence the embedding
    $H^1((\Omega\setminus \bar K)\setminus D(\bar
    a,\e))\hookrightarrow \hookrightarrow L^2(\Omega\setminus D(\bar
    a,\e))$ is compact, where
    $\bar K=\bigcup_{j\in\N}S_{a_{k_j}}^\Omega\subseteq K$. Hence
    $v_{k_j}\to v$ strongly in $L^2(\Omega\setminus D(\bar a,\e))$ for
    every $\e>0$ small. By a diagonal argument, it easily follows that,
    up to extracting a further subsequence,
    \begin{equation}\label{eq:ae-conv}
      v_{k_j}\to v\text{ a.e. in $\Omega$}. 
    \end{equation}
For every $j$, let 
\begin{equation*}
    g_j:= e^{i\Theta^\Omega_{a_{k_j}}(x)}v_{k_j}(x).
\end{equation*}
Since $v_k\in \mathcal H_{a_k}(\Omega)$, we have $g_j\in
H^{1,a_k}(\Omega,\C)$. Moreover,
\begin{equation*}
  (i\nabla +A_{a_{k_j}})g_j=i e^{i\Theta^\Omega_{a_{k_j}}}\nabla v_{k_j}.
\end{equation*}
Hence, in view of the 
diamagnetic inequality stated in Lemma \ref{l:diam}, $|g_j|\in
H^1(\Omega)$ for all $j$ and
\begin{align*}
  \| v_{k_j}\|_{H^1(\Omega\setminus K)}^2&=
  \int_{\Omega\setminus S^\Omega_{a_{k_j}}}\left(|\nabla  v_{k_j}|^2+
                                           v_{k_j}^2\right)\,dx\\
  &=
  \int_{\Omega}\left(|(i\nabla
    +A_{a_{k_j}})g_j|^2+|g_j|^2\right)\,dx\geq \int_{\Omega}\left(\big|\nabla|g_j|\big|^2+|g_j|^2\right)\,dx.
\end{align*}
It follows that the sequence $\{|g_j|\}_j$ is bounded in
$H^1(\Omega)$. Therefore, by the compactness of the embedding
$H^1(\Omega)\hookrightarrow \hookrightarrow L^2(\Omega)$, there exist
a subsequence of $\{|g_j|\}_j$ , still denoted as $\{|g_j|\}_j$, and
some $G\in L^2(\Omega)$ such that $|g_j|\to G$ strongly in
$L^2(\Omega)$ and a.e.  in $\Omega$. Since $|g_j|=|v_{k_j}|$ a.e.,
\eqref{eq:ae-conv} implies that $G=|v|$ a.e. in $\Omega$. Hence
$|g_j|\to |v|$ strongly in $L^2(\Omega)$, so that
\begin{equation*}
  \int_\Omega|g_j|^2\,dx=\int_\Omega |v_{k_j}|^2\,dx\to
  \int_\Omega |v|^2\,dx\quad \text{as }j\to\infty.
\end{equation*}
The convergence of $L^2$-norms established above, together with the
weak convergence $v_{k_j}\rightharpoonup v$ in $L^2(\Omega)$, implies
that\begin{equation*} v_{k_j}\to v\text{ strongly in $L^2(\Omega)$}.
    \end{equation*}
    The conclusion \eqref{eq:comp} then follows from the Urysohn
    subsequence principle.
  \end{proof}

  \begin{remark}
    We observe that, in general, $\Omega\setminus K$ is not regular
    enough to ensure the compactness of the embedding
    $H^1(\Omega\setminus K) \hookrightarrow L^2(\Omega\setminus
    K)$. Therefore, weak convergence in $H^1(\Omega\setminus K)$ does
    not always directly imply strong convergence in
    $L^2(\Omega\setminus K)$. In Proposition \ref{prop_mosco}, it was
    possible to deduce strong $L^2$-convergence for $\{v_k\}$
    exploiting the additional condition that each $v_k$ jumps across
    only one segment.
\end{remark}

Arguing as in \cite[Lemma 3.4]{FNOS_multi}, we can obtain the following
result.
\begin{proposition}\label{prop_density_0}
  For any $b \in \partial \Omega$, the set
  $H_{0,b}^1(\Omega):=\{v\in H^1(\Omega):v\equiv0\text{ in a
    neighborhood of } b\}$ is dense in $H^1(\Omega)$.
\end{proposition}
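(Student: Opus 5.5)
The idea is that a point in the plane has zero $H^1$-capacity, and we will show this holds uniformly enough near a Lipschitz boundary point. Concretely, we approximate every $v\in H^1(\Omega)$ by $v(1-\eta_\e)$, where $\eta_\e$ is a logarithmic cut-off concentrated at $b$.

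First we reduce to bounded functions. Truncations $T_M v=\max\{-M,\min\{v,M\}\}$ belong to $H^1(\Omega)$ and converge to $v$ in $H^1(\Omega)$ as $M\to\infty$. Alternatively, since $\Omega$ is Lipschitz, $C^\infty(\overline\Omega)$ is dense in $H^1(\Omega)$. Either way it suffices to approximate $v\in H^1(\Omega)\cap L^\infty(\Omega)$, and with the second choice we may even take $v$ Lipschitz on $\overline\Omega$.

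For $0<\e<1$ we use the standard two-dimensional capacitary cut-off
\begin{equation*}
\eta_\e(x)=\begin{cases}1,&|x-b|\le\e^2,\\[2pt]
\dfrac{\log(|x-b|/\e)}{\log \e},&\e^2<|x-b|<\e,\\[4pt]
0,&|x-b|\ge\e.\end{cases}
\end{equation*}
This function is Lipschitz, takes values in $[0,1]$, and a direct computation in polar coordinates gives
\begin{equation*}
\int_{\R^2}|\nabla\eta_\e|^2\,dx=\frac{2\pi}{|\log\e|}\to0 .
\end{equation*}
Set $v_\e=(1-\eta_\e)v$. Then $v_\e\in H^1(\Omega)$ and $v_\e\equiv0$ on $D(b,\e^2)\cap\Omega$, which is a neighborhood of $b$ in $\Omega$, so $v_\e\in H^1_{0,b}(\Omega)$.

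To prove convergence, write
\begin{equation*}
v-v_\e=\eta_\e v,\qquad \nabla(\eta_\e v)=\eta_\e\nabla v+v\nabla\eta_\e .
\end{equation*}
The terms $\eta_\e v$ and $\eta_\e\nabla v$ tend to $0$ in $L^2(\Omega)$ by dominated convergence, since they are supported in $D(b,\e)$, whose measure tends to $0$. The remaining term satisfies
\begin{equation*}
\|v\nabla\eta_\e\|_{L^2(\Omega)}^2\le\|v\|_{L^\infty}^2\,\frac{2\pi}{|\log\e|}\to0 .
\end{equation*}
Combining this with the truncation step (or the smooth density step) and a diagonal argument gives density.

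The only delicate point is the step that uses boundedness of $v$. For a general $H^1$ function one would need a Hardy-type bound for $\int |v|^2|\nabla\eta_\e|^2$, which fails in dimension two. The reduction to bounded functions avoids this. Truncation is elementary and does not use the Lipschitz regularity at all, so the argument holds for any $b\in\partial\Omega$ (it even works verbatim for $b\in\overline\Omega$).
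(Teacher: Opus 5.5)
Your proof is correct and is essentially the argument the paper has in mind. The paper gives no proof of its own, deferring to \cite[Lemma 3.4]{FNOS_multi} and elsewhere invoking the null capacity of a point in $\R^2$; your logarithmic cut-off (the same as \eqref{eq:def-cutoff} with $r=\e^2$) applied after reducing to bounded functions is the standard concrete form of that argument, and your use of truncation rather than smooth density also shows that the Lipschitz assumption is not needed for this step.
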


\section{Spectral stability}\label{sec:spectral-stability}
This section is devoted to the proof of the stability result contained
in Proposition \ref{prop_spectral_stability}.
\begin{proof}[Proof of Proposition \ref{prop_spectral_stability}]
Let $\Omega$ and $p$ be fixed as in
\eqref{eq:ass-Omega}--\eqref{hp_p}.
For every $a\in\Omega$, let 
\begin{equation*}
R_{a}:L^2(\Omega) \to  \mathcal H_a(\Omega)\subset L^2(\Omega)  
\end{equation*}
be the linear operator defined as follows: for every
$f \in L^2(\Omega)$, $R_{a} f$ is the unique weak solution to the
problem
\begin{equation*}
\begin{cases}
-\Delta v +pv = pf, &\text{in }\Omega\setminus S_{a}^\Omega,\\
\frac{\partial v}{\partial \nu}=0, &\text{on }\partial\Omega,\\
\gamma_{a,\Omega}^+(v) + \gamma_{a,\Omega}^+(v)=0, &\text{on }S_{a}^\Omega,\\
\gamma_{a,\Omega}^+(\nabla v\cdot \nu_{a}^\Omega) + \gamma_{a,\Omega}^+(\nabla
v\cdot\nu_{a}^\Omega)=0,
&\text{on }S_{a}^\Omega,
\end{cases}
\end{equation*}
in the sense that $v =R_{a} f\in  \mathcal H_{a}(\Omega)$ and 
\begin{equation}\label{eq:eq-Raf}
  \int_{\Omega \setminus S_{a}^\Omega} \big(\nabla v \cdot \nabla w +pvw\big)
  \, dx = \int_{\Omega}  pf w \, dx \quad
  \text{for all } w \in  \mathcal H_{a}(\Omega).
\end{equation}
In the same way, we consider the linear operator 
\begin{equation*}
R:L^2(\Omega) \to  H^1(\Omega)\subset L^2(\Omega)  
\end{equation*}
defined as follows: for every
$f \in L^2(\Omega)$, $R f$ is the unique weak solution to the
problem
\begin{equation*}
\begin{cases}
-\Delta v +pv = pf, &\text{in }\Omega,\\
\frac{\partial v}{\partial \nu}=0, &\text{on }\partial\Omega.
\end{cases}
\end{equation*}
The operator $R_{a}$ is linear and bounded as an operator from
$L^2(\Omega)$ into $ \mathcal H_{a}(\Omega)$.  Moreover, as an operator from
$L^2(\Omega)$ into $L^2(\Omega)$, $R_{a}$ is compact and 
self-adjoint
if $L^2(\Omega)$ is endowed with the scalar product 
\begin{equation}\label{eq:scal-p}
\ps{L^2(\Omega,p)}{u}{v}:=\int_{\Omega}p uv \, dx,
\end{equation}
which is equivalent to the standard one by \eqref{hp_p}. The norm
corresponding to the scalar product \eqref{eq:scal-p} is
\begin{equation*}
  \|u\|_{L^2(\Omega,p)}
  :=\sqrt{\int_{\Omega}p u^2 \, dx}.
\end{equation*}
Finally,
testing \eqref{eq:eq-Raf} with $w=R_af$, we easily obtain that, by
\eqref{hp_p},
  \begin{equation}\label{eq:stima-Ra}
    \|R_a f\|_{H^1(\Omega\setminus S_a^\Omega)}\leq
 C_p   \|f\|_{L^2(\Omega)}\quad\text{for every }f\in L^2(\Omega),
\end{equation}
for some $C_p>0$ depending only on the weight $p$.

This implies in particular, that
$\norm{R_{a}}_{\mc{L}(L^2(\Omega))}\le C_p$, where
$\norm{R_{a}}_{\mc{L}(L^2(\Omega))}$ is the operator norm.

Let $f \in L^2(\Omega)$.  Let $\{a_k\}_{k\in\N}\subset\Omega$ be
  such that $\lim_{k\to\infty}\mathop{\rm dist}(a_k,\partial\Omega)=0$
  and $K=\bigcup_{k\in\N}S_{a_k}^\Omega$.  By \eqref{eq:stima-Ra},
$\{R_{a_k}f\}_{k\geq 1}$ is bounded in $H^1(\Omega\setminus
K)$. Hence, in view of Proposition \ref{prop_mosco}, there exist
$\bar a\in\partial\Omega$, $w \in H^1(\Omega)$, and a subsequence,
which we still label with $a_k$, such that
$\lim_{k\to\infty}a_k=\bar a$, $R_{a_k} f \rightharpoonup w$ weakly in
$H^1(\Omega\setminus K)$, and $R_{a_k} f \to w$ strongly in
$L^2(\Omega)$ as $k \to \infty$.  For any
$g \in H_{0,\bar a}^1(\Omega)$, we have $g\in \mathcal H_{a_k}(\Omega)$ for
sufficiently large $k$, so that, in view of \eqref{hp_p},
\begin{equation*}
  \int_{\Omega} pfg \, dx=
  \lim_{k\to \infty}\int_{\Omega\setminus S_{a_k}^\Omega} \big(\nabla (R_{a_k} f)
  \cdot \nabla g + p(R_{a_k}f) g\big) \, dx
  =\int_{\Omega} (\nabla w \cdot \nabla g +pw g ) \, dx.
\end{equation*}
We conclude that $w= R f$ by Proposition \ref{prop_density_0}.  Since
the limit of $\{R_{a_k}f\}$ in $L^2(\Omega)$ depends neither on the
sequence $\{a_k\}$ nor on the extracted subsequence, by the Urysohn
subsequence principle we conclude that, for every $f\in L^2(\Omega)$,
\begin{equation}\label{proof_spectral_stability_1}
  R_{a}f\to R f\quad\text{in }L^2(\Omega)\text{ as }
  \mathop{\rm dist}(a,\partial\Omega)\to 0.
\end{equation}
By compactness of the operator $R_{a}- R$, for every $a\in\Omega$
there exists $f_{a} \in L^2(\Omega)$ such that
$\norm{f_{a}}_{L^2(\Omega)}=1$ and
\begin{equation}\label{eq:normRa-R}
  \norm{R_{a}- R}_{\mc{L}(L^2(\Omega))}=
  \norm{R_{a}f_{a}- R f_{a}}_{ L^2(\Omega)}.
\end{equation}
Let $\{a_k\}_{k\in\N}\subset\Omega$ be such that
  $\lim_{k\to\infty}\mathop{\rm dist}(a_k,\partial\Omega)=0$ and
  $K=\bigcup_{k\in\N}S_{a_k}^\Omega$. Then there exists
$\bar a\in\partial\Omega$ and $f\in L^2(\Omega)$ such that, up to a
subsequence, $a_k\to\bar a$ and $f_{a_k} \rightharpoonup f$ weakly in
$L^2(\Omega)$ as $k \to \infty$. By compactness of $R$
\begin{equation}\label{eq:compR}
  R f_{a_k} \to R f \quad\text{as $k \to \infty$ in $L^2(\Omega)$}.
\end{equation}
In view of \eqref{eq:stima-Ra}, up to passing to a further
subsequence, there exists $w \in H^1(\Omega\setminus K)$ such that
$R_{a_k}f_{a_k} \rightharpoonup w$ weakly as $k \to \infty$ in
$H^1(\Omega \setminus K)$. By Proposition \ref{prop_mosco} we have
$w\in H^1(\Omega)$ and $R_{a_k}f_{a_k}\to w$ strongly in
$L^2(\Omega)$. Moreover, in view of the self-adjointness of $R_{a_k}$
and $R$ with respect to the scalar product \eqref{eq:scal-p}, and
\eqref{proof_spectral_stability_1}, for every $g \in L^2(\Omega)$ we
have
\begin{equation*}
\int_{\Omega} pwg \, dx=\lim_{k \to \infty}\int_{\Omega}  p(R_{a_k}
f_{a_k}) g  \, dx
=\lim_{k \to \infty}\int_{\Omega}  pf_{a_k} ( R_{a_k}g)  \, dx
=\int_{\Omega}  pf (R g)  \, dx=\int_{\Omega}p( Rf)  g  \, dx.
\end{equation*}
We conclude that $w=R f$, so that
\begin{equation}\label{eq:Rafa}
 R_{a_k}f_{a_k}\to Rf\quad\text{strongly in }L^2(\Omega) \quad\text{as }
  k\to\infty.
\end{equation}
From \eqref{eq:normRa-R}, \eqref{eq:compR}, and \eqref{eq:Rafa} it
follows that
\begin{equation*}
\|R_{a_k}- R\|_{\mc{L}(L^2(\Omega))}=\|R_{a_k}f_{a_k}- R
  f_{a_k}\|_{ L^2(\Omega)}
\leq \|R_{a_k}f_{a_k}-Rf\|_{L^2(\Omega)}+\| Rf-
  Rf_{a_k}\|_{ L^2(\Omega)}\to 0
\end{equation*}
as $k\to\infty$.
 Since
the limit of $\|R_{a_k}- R\|_{\mc{L}(L^2(\Omega))}$ depends neither on the
sequence $\{a_k\}$ nor on the extracted subsequence, by the Urysohn subsequence
principle we conclude that
\begin{equation}\label{oper_conv}
  \|R_{a}- R\|_{\mc{L}(L^2(\Omega))}\to 0\quad\text{as }
  \mathop{\rm dist}(a,\partial\Omega)\to 0.
\end{equation}
We observe that
\begin{equation*}
  \sigma( R_a)= \bigg\{\frac1{1+\la_k^a(\Omega,p)}\bigg\}_{k \in
  \mathbb{N}\setminus\{0\}} \cup \{0\},\quad
\sigma( R)= \bigg\{\frac1{1+\la_k(\Omega,p)}\bigg\}_{k \in
  \mathbb{N}\setminus\{0\}} \cup \{0\},
\end{equation*}
where $\sigma(R_a)$ and $\sigma(R)$ denote the spectrum of $R_a$ and
$R$, respectively.  Hence, \eqref{limit_eigen_conv} follows from
\cite[Chapter XI 9.5, Lemma 5, Page~1091]{DSJ_book} and \eqref{oper_conv}.

\end{proof}
\section{Expansion of eigenvalues}\label{sec_eigen_expansion}

Throughout this section, we let $p$ be as in \eqref{hp_p} and assume
that the domain $\Omega$ satisfies assumption~\eqref{eq:ass-Omega}.
For every $a\in\Omega$, we define the bilinear symmetric form
$q^a_{\Omega,p}: H^1(\Omega\setminus S_a^\Omega) \times
H^1(\Omega\setminus S_a^\Omega) \to\R$ as
\begin{equation}\label{eq:qf}
   q^a_{\Omega,p} (v,w):=\int_{\Omega\setminus S_a^\Omega}\big(\nabla v\cdot\nabla w+pv w)\,dx.
 \end{equation}
Furthermore,
 for every $a\in\Omega$ and
 $u\in \mathcal F(\Omega)=\{u\in H^1(\Omega):\Delta u\in
 L^2(\Omega)\}$, we consider the linear and bounded functional
 \begin{align}
   \notag &   L^{a,u}_{\Omega}: H^1(\Omega\setminus S_a^\Omega) \to \R,\\
\label{eq:cara_La}
          & L^{a,u}_{\Omega}(v):=
\int_{\Omega\setminus S_a^\Omega}\nabla u\cdot\nabla v\,dx+
   \int_{\Omega}(\Delta u)   v\,dx
\quad\text{for every }v\in
  H^1(\Omega\setminus S_a^\Omega).
 \end{align}
 We observe that, if $u\in \mathcal F(\Omega)\cap
 C^1(\overline{\Omega})$ and $\partial_\nu u=0$ on $\partial\Omega$, an integration by parts yields
 \begin{equation}\label{eq:cara_La2}
   L^{a,u}_{\Omega}(v)=-\int_{S_a^\Omega}(\nabla u\cdot \nu_a^\Omega)
   (\gamma_{a,\Omega}^+(v)-\gamma_{a,\Omega}^-(v))\,dS\quad\text{for every }v\in
   H^1(\Omega\setminus S_a^\Omega).
\end{equation}
For every $a\in\Omega$ and
 $u\in \mathcal F(\Omega)$,  we consider the quadratic functional
\begin{equation*}
  J^{a,u}_{\Omega,p}: H^1(\Omega\setminus S_a^\Omega)\to \R, \quad
  J^{a,u}_{\Omega,p}(v):=\frac12 q^{a}_{\Omega,p}(v,v)-L^{a,u}_{\Omega}(v).
\end{equation*}
By a standard minimization argument, for every $a\in\Omega$ and
$u\in \mathcal F(\Omega)$, there exists a unique function
$V^{a,u}_{\Omega,p}\in H^1(\Omega\setminus S_a^\Omega)$ such that
\begin{equation}\label{eq:def_Va}
  V^{a,u}_{\Omega,p}-u\in \mathcal H_a(\Omega)\quad\text{and}\quad
  J^{a,u}_{\Omega,p}(V^{a,u}_{\Omega,p})=\mathcal E^{a,u}_{\Omega,p},
\end{equation}
where 
\begin{equation}\label{eq:E_a}
  \mathcal E^{a,u}_{\Omega,p}:=
  \min\left\{J^{a,u}_{\Omega,p}(v):v\in H^1(\Omega\setminus S_a^\Omega), \
    v-u\in \mathcal H_a(\Omega)\right\}.
\end{equation}
Furthermore, $V^{a,u}_{\Omega,p}$ is the unique solution to the variational problem
\begin{equation}\label{eq:eq_Va}
  \begin{cases}
    V^{a,u}_{\Omega,p}-u\in \mathcal H_a(\Omega),\\
    q^{a}_{\Omega,p}(V^{a,u}_{\Omega,p},v)=L^{a,u}_{\Omega}(v),
    \quad\text{for all }v\in \mathcal H_a(\Omega).
  \end{cases}
\end{equation}
\begin{remark}\label{rem:comparison-weight}
  We observe that, if $p_1,p_2\in L^\infty(\Omega)$ satisfy
  \eqref{hp_p} and $p_1\leq p_2$ a.e. in $\Omega$, then
  \begin{equation*}
  \mathcal E^{a,u}_{\Omega,p_1}\leq  \mathcal
  E^{a,u}_{\Omega,p_2}.
\end{equation*}
\end{remark}

To estimate the quantity $\mathcal E^{a,u}_{\Omega,p}$, it is useful
to introduce the following cut--off functions.
  For every $r\in(0,1)$, let $\eta_{r} \in W^{1,\infty}(\R^2)$ be
  defined as
\begin{equation}\label{eq:def-cutoff}
  \eta_{r}(x)=
  \begin{cases}
    1, &\text{if } |x|<r,\\
    \frac{2\log|x|-\log r}{\log r}, &\text{if }
    r\leq |x|\leq \sqrt r, \\
    0, &\text{if } |x|>\sqrt r.
\end{cases}
\end{equation}
We observe that $0\le\eta_{r}(x)\le 1$ for all
$x\in\R^2$ and
\begin{equation}\label{eq:int-cut}
  \int_{\R^2}|\nabla
  \eta_r|^2\,dx=\frac{4\pi}{|\log r|}\quad\text{for all }r\in(0,1).
\end{equation}

\begin{example}
  Let us consider, for example, the case in which the function $u$ in
  the definition
\eqref{eq:E_a}   is identically equal to a constant $\kappa\in \R\setminus\{0\}$. In this
case $L^{a,\kappa}_{\Omega}\equiv0$ and
\begin{align*}
  \mathcal E^{a,\kappa}_{\Omega,p}&=\frac12 q^{a}_{\Omega,p}(
                                    V^{a,\kappa}_{\Omega,p}, V^{a,\kappa}_{\Omega,p})
                                    \leq \frac12 q^{a}_{\Omega,p}(\kappa\, \eta_{d_a^\Omega}(x-P_a), \kappa\,
                                    \eta_{d_a^\Omega}(x-P_a))\\
                                  &  \leq\frac{\kappa^2}2  \int_{\R^2}\big(|\nabla
                                    \eta_{d_a^\Omega}|^2+\|p\|_{L^\infty(\Omega)}\eta_{d_a^\Omega}^2\big)\,dx,
\end{align*}
where $\eta_{d_a^\Omega}$ is defined in \eqref{eq:def-cutoff}.
Hence, in view of \eqref{eq:int-cut} and \eqref{hp_p}
\begin{equation}\label{eq:const-case}
  \mathcal E^{a,\kappa}_{\Omega,p}
  =O\left(\frac1{|\log d_a^\Omega|}\right)\quad\text{and}
  \quad \| V^{a,\kappa}_{\Omega,p}\|_{H^1(\Omega\setminus S_a^\Omega)}=O\left(\frac1{\sqrt{|\log d_a^\Omega|}}\right)
  \quad\text{as }
  d_a^\Omega=\mathop{\rm dist}(a,\partial\Omega) \to0.
\end{equation}
\end{example}

\subsection{\texorpdfstring{$ \mathcal E^{a,u}_{\Omega,p}$}{E} for an eigenfunction
    \texorpdfstring{$u$}{u}}
In the remainder of this section, we focus on the case where $u$ is an
eigenfunction of problem \eqref{eq:eige_lapla} associated to a
possibly multiple eigenvalue $\lambda_n(\Omega,p)$.   
Let $n,m \in \N\setminus\{0\}$ be such that
\begin{equation}\label{lambda-n_multiple-m}
\text{$\lambda_n(\Omega,p)$ has
  multiplicity $m$ as an eigenvalue of \eqref{eq:eige_lapla}},
\end{equation}
i.e. 
\begin{equation}\label{lambda-n_multiple-m-bis}
  \lambda_{n-1}(\Omega,p)<\lambda_n(\Omega,p)=\lambda_{n+1}(\Omega,p)
  =\cdots=\lambda_{n+m-1}(\Omega,p)<\lambda_{n+m}(\Omega,p).
\end{equation}
Let $E(\lambda_n(\Omega,p))$ be the associated $m$-dimensional eigenspace, i.e.
\begin{equation*}
  E(\lambda_n(\Omega,p))=\big\{v\in H^1(\Omega): v\text{ weakly solves
    }\eqref{eq:eige_lapla} \text{ with }\lambda=\lambda_n(\Omega,p)\big\}.
\end{equation*}
Let
\begin{equation}\label{eq:basis}
\{u_{n}, u_{n+1},\dots, u_{n+m-1}\} \quad\text{be a basis of
  $E(\lambda_n(\Omega,p))$ orthonormal in $L^2(\Omega,p)$},
\end{equation}
i.e., orthonormal with respect to the scalar product \eqref{eq:scal-p}.
For every $j\in\{1,\dots,m\}$, since $u_{n+j-1}$ is an eigenfunction
of the weighted Neumann Laplacian in a Lipschitz domain, it is well
known that
\begin{equation}\label{eq:reg_u_n}
  u_{n+j-1} \in C^{1,\alpha}(\overline{\Omega})\quad\text{for all }\alpha\in (0,1], 
\end{equation}
see e.g. \cite{winkert2010}.

If $u\in E(\lambda_n(\Omega,p))$, 
$L^{a,u}_{\Omega}$ can be characterized as in  \eqref{eq:cara_La2}
by \eqref{eq:reg_u_n}; moreover, by \eqref{eq:eige_lapla} we have 
 \begin{equation}
   \label{eq:def_La}
   L^{a,u}_{\Omega}(v)= q^a_{\Omega,p}(u,v)-(\lambda_n(\Omega,p)+1) \int_{\Omega} pu
   v\,dx
   =\int_{\Omega\setminus S_a^\Omega}\nabla u\cdot\nabla v\,dx
   -\lambda_n(\Omega,p) \int_{\Omega}p u   v\,dx.
 \end{equation}
For every $a\in\Omega$, we consider the bilinear form 
\begin{align}
\notag  &r_{\Omega,p,n}^a: E(\lambda_n(\Omega,p)) \times E(\lambda_n(\Omega,p))\to\R,\\
 \label{eq:r-a} &  r_{\Omega,p,n}^a (u,w) :=(\lambda_n(\Omega,p)+1)\int_\Omega p V_{\Omega,p}^{a,u}(w-V_{\Omega,p}^{a,w})\,dx .
\end{align}

\begin{remark}\label{r_r-a-symm}
  Let $r_{\Omega,p,n}^a$ be the bilinear form defined in \eqref{eq:r-a}. For every
  $u,w\in  E(\lambda_n(\Omega,p))$ we have 
  \begin{align*}
    r_{\Omega,p,n}^a (u,w)&-r_{\Omega,p,n}^a (w,u)=(\lambda_n(\Omega,p)+1)\int_\Omega p
                       w V_{\Omega,p}^{a,u}\,dx-(\lambda_n(\Omega,p)+1)\int_\Omega p
                       u V_{\Omega,p}^{a,w} \,dx\\
    &=q_{\Omega,p}^a(w,
      V_{\Omega,p}^{a,u})-L^{a,w}_\Omega(V_{\Omega,p}^{a,u})-\left(
      q_{\Omega,p}^a(u, V_{\Omega,p}^{a,w})-L^{a,u}_\Omega(V_{\Omega,p}^{a,w})\right)\\
            &=q_{\Omega,p}^a(V_{\Omega,p}^{a,u},w-V_{\Omega,p}^{a,w})+q_{\Omega,p}^a(V_{\Omega,p}^{a,u},
              V_{\Omega,p}^{a,w})-L_\Omega^{a,w}(V_{\Omega,p}^{a,u})\\
    &\qquad-
              \left(q_{\Omega,p}^a(V_{\Omega,p}^{a,w},u-V_{\Omega,p}^{a,u})+q_{\Omega,p}^a(V_{\Omega,p}^{a,w},
              V_{\Omega,p}^{a,u})-
              L^{a,u}_\Omega( V_{\Omega,p}^{a,w})\right)\\
    &=L^{a,u}_\Omega (w)-L^{a,u}_\Omega (V_{\Omega,p}^{a,w})
      -L^{a,w}_\Omega(V_{\Omega,p}^{a,u})-L^{a,w}_\Omega
      (u)+L^{a,w}_\Omega (V_{\Omega,p}^{a,u}) +L^{a,u}_\Omega (V_{\Omega,p}^{a,w})=0
  \end{align*}
  by
  \eqref{eq:def_La}, \eqref{eq:eq_Va}, and the fact that
  $L^{a,u}_\Omega (w)=L^{a,w}_\Omega (u)=0$ because both $u$ and $w$  solve \eqref{eq:eige_lapla} with $\lambda=\lambda_n(\Omega,p)$.
Hence
\begin{equation*}
  r_{\Omega,p,n}^a: E(\lambda_n(\Omega,p)) \times E(\lambda_n(\Omega,p))\to\R \quad \text{is a symmetric
  bilinear form}.
\end{equation*}
\end{remark}

\medskip\noindent For notational convenience, we set, for every
$a\in\Omega$ and $u\in E(\lambda_n(\Omega,p))$,
\begin{align}
\notag&  \lambda_n=\lambda_n(\Omega,p),\quad  d_a=d_a^\Omega, \quad P_a=P_a^\Omega, \quad S_a=S_a^\Omega,\\
\label{eq:simply-notation} &q_a=q^a_{\Omega,p}, \quad L^u_a= L^{a,u}_{\Omega},\quad J^u_a=
   J^{a,u}_{\Omega,p},\quad 
   \mathcal E^u_a=\mathcal E^{a,u}_{\Omega,p},\quad
                             V^u_a=V^{a,u}_{\Omega,p},\quad r_a=r_{\Omega,p,n}^a, 
\end{align}
omitting the dependence on $\Omega$ and $p$
 in the notation, while keeping it implicitly understood.

\begin{proposition}\label{prop_La}
  For every $a\in\Omega$ and $u\in E(\lambda_n)$, the map $L^u_a$
  defined in \eqref{eq:cara_La} and \eqref{eq:simply-notation} belongs to the dual space
  $(H^1(\Omega\setminus S_a))^*$. Furthermore, there exists $\delta_1>0$ and, for every
  $u\in E(\lambda_n)$, there exists a constant $C_1(u,\Omega,p)>0$ (which depends  on $u, \Omega,p$
  but is independent of $a$) such that, for all $a\in\Omega$ with
  $d_a\in(0,\delta_1)$,
\begin{equation}\label{eq_norm_La}
\|L^u_a\|_{(H^1(\Omega\setminus
  S_a))^*}\leq \frac{C_1(u, \Omega,p)}{\sqrt{|\log d_a|}}.
\end{equation}
\end{proposition}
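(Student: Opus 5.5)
Boundedness of $L^u_a$ on $H^1(\Omega\setminus S_a)$ follows directly from \eqref{eq:cara_La}. Since $u\in H^1(\Omega)$ and $\Delta u=-\lambda_n pu\in L^2(\Omega)$, Cauchy--Schwarz gives
\[
|L^u_a(v)|\le \big(\|\nabla u\|_{L^2(\Omega)}+\lambda_n\|p\|_{L^\infty(\Omega)}\|u\|_{L^2(\Omega)}\big)\|v\|_{H^1(\Omega\setminus S_a)}.
\]
This bound does not decay in $d_a$, so the quantitative estimate \eqref{eq_norm_La} needs a cut-off argument that exploits the fact that $L^u_a$ only sees the jump of $v$ across $S_a$. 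This is visible in \eqref{eq:cara_La2}, valid by \eqref{eq:reg_u_n}.

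Let $\eta=\eta_{d_a}(\cdot-P_a)$, with $\eta_r$ as in \eqref{eq:def-cutoff}, and assume $d_a<\delta_1<1$. The segment $S_a$ lies in $\overline{D(P_a,d_a)}$, where $\eta\equiv1$. Hence $(1-\eta)v\in H^1(\Omega)$: it vanishes near $S_a$, so it has no jump there, and it is a legitimate test function. Since $u$ solves \eqref{eq:eige_lapla} weakly, $L^u_a((1-\eta)v)=0$. Therefore
\[
L^u_a(v)=L^u_a(\eta v)=\int_{\Omega\setminus S_a}\nabla u\cdot\nabla(\eta v)\,dx-\lambda_n\int_\Omega pu\,\eta v\,dx .
\]
Both integrals are over $D(P_a,\sqrt{d_a})\cap\Omega$. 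Expanding $\nabla(\eta v)=\eta\nabla v+v\nabla\eta$ and using $\|\nabla u\|_{L^\infty}+\|u\|_{L^\infty}\le C(u)$ (again by \eqref{eq:reg_u_n}), I bound
\[
|L^u_a(v)|\le C(u)\Big(|D(P_a,\sqrt{d_a})|^{1/2}\big(\|\nabla v\|_{L^2}+\|v\|_{L^2}\big)+\int_\Omega|v|\,|\nabla\eta|\,dx\Big).
\]
The first term is $O(\sqrt{d_a})\|v\|_{H^1(\Omega\setminus S_a)}$, which is much smaller than $|\log d_a|^{-1/2}$.

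The delicate term is $\int_\Omega|v||\nabla\eta|\,dx$. By Cauchy--Schwarz and \eqref{eq:int-cut} it is at most $\|v\|_{L^2(\Omega)}\,(4\pi/|\log d_a|)^{1/2}$, which is exactly the desired rate. Combining the two terms gives $|L^u_a(v)|\le C_1(u,\Omega,p)\,|\log d_a|^{-1/2}\|v\|_{H^1(\Omega\setminus S_a)}$ for $d_a$ small, with $C_1$ independent of $a$.

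The main obstacle is making the identity $L^u_a((1-\eta)v)=0$ rigorous. In the weak form of \eqref{eq:eige_lapla} the test function must lie in $H^1(\Omega)$. The function $(1-\eta)v$ does: it vanishes on $D(P_a,d_a)\cap\Omega$, a neighbourhood of $S_a$ relative to $\Omega$, so its distributional gradient in $\Omega$ has no singular part on $S_a$. One also uses $\eta\in W^{1,\infty}$. A second check is that the constants are uniform in $a$; this holds because they depend only on $\|u\|_{C^1(\overline\Omega)}$, $\lambda_n$, $\|p\|_{L^\infty}$, and the universal value in \eqref{eq:int-cut}. No regularity of $\Omega$ near $P_a$ is needed, since every estimate is an interior integral bound.
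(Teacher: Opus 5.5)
Your proof is correct and follows essentially the paper's own argument: the same splitting $v=(1-\eta)v+\eta v$ with $\eta=\eta_{d_a}(\cdot-P_a)$, the vanishing of $L^u_a$ on $H^1(\Omega)$ by the weak eigenvalue equation, and the same three estimates giving $O(\sqrt{d_a})+O(|\log d_a|^{-1/2})$. One small correction: $a$ lies on $\partial D(P_a,d_a)$, so $D(P_a,d_a)\cap\Omega$ is a neighbourhood of $S_a\setminus\{a\}$ only, not of all of $S_a$; the claim $(1-\eta)v\in H^1(\Omega)$ still holds, because a single point has null capacity in $\R^2$ (equivalently, both traces of $(1-\eta)v$ vanish a.e.\ on $S_a$).
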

\begin{proof}
For every $v\in H^1(\Omega\setminus S_a)$, the function 
\begin{equation*}
  w(x):=v(x)\big(1-\eta_{d_a}(x-P_a)\big), \quad x\in\Omega,
\end{equation*}
belongs to $H^1(\Omega)$, where $\eta_{d_a}$ is defined in
\eqref{eq:def-cutoff}.  Hence, by \eqref{eq:def_La} and the fact that
$u$ solves \eqref{eq:eige_lapla} with $\lambda=\lambda_n$, we have
\begin{align*}
  L^u_a(v)&=\int_{\Omega}\nabla u\cdot\nabla w\,dx
            -\lambda_n\int_{\Omega}p u   w\,dx\\
  &\quad +
            \int_{\Omega\setminus S_a}\nabla u\cdot\nabla (v \eta_{d_a}(\cdot-P_a))\,dx
    -\lambda_n \int_{\Omega}p u   v \eta_{d_a}(\cdot-P_a)\,dx\\
  &=\int_{\Omega\setminus S_a}\eta_{d_a}(\cdot-P_a)\nabla u\cdot\nabla v\,dx+
    \int_{\Omega\setminus S_a}v\nabla u\cdot\nabla
    \eta_{d_a}(\cdot-P_a)\,dx\\
  &\quad
    -\lambda_n \int_{\Omega}p u   v \eta_{d_a}(\cdot-P_a)\,dx.
\end{align*}
Hence, taking into account \eqref{eq:int-cut}, \eqref{hp_p} and the fact
that $u,|\nabla u|\in L^\infty(\Omega)$ in
view of \eqref{eq:reg_u_n},
\begin{align*}
  &\big|L^u_a(v)\big|\\
  &\leq \|v\|_{H^1(\Omega\setminus
                         S_a)}\left(\|\nabla u\|_{L^\infty(\Omega)}
                         \sqrt{\pi d_a}+\|\nabla
                         u\|_{L^\infty(\Omega)}\sqrt\frac{4 \pi}{|\log
                                      d_a|}+
                                      \lambda_n
                         \|u\|_{L^\infty(\Omega)}\|p\|_{L^\infty(\Omega)}\sqrt{\pi
                         d_a}\right)\\
  &\leq \|v\|_{H^1(\Omega\setminus
                         S_a)}\left(\|\nabla u\|_{L^\infty(\Omega)}
                         +\|\nabla
                         u\|_{L^\infty(\Omega)}+\lambda_n
    \|u\|_{L^\infty(\Omega)}\|p\|_{L^\infty(\Omega)}\right)
    \sqrt\frac{4 \pi}{|\log d_a|}
\end{align*}
for every $v\in H^1(\Omega\setminus S_a)$, provided $d_a$ is
sufficiently small (independently of $u$ and $v$). This completes the proof.
\end{proof}
\begin{proposition}\label{prop_Ee_limit}
  For every $a\in\Omega$ and $u\in E(\lambda_n)$, let $\mathcal E^u_a$
  and $V^u_a$ be defined in \eqref{eq:E_a},
  \eqref{eq:def_Va}, and \eqref{eq:simply-notation}.
There exists $\delta_2>0$ and, for every
  $u\in E(\lambda_n)$, there exists a constant $C_2(u,\Omega,p)>0$ (which depends  on $u$,
  $\Omega$ and $p$,
  but is independent of $a$) such that, for all $a\in\Omega$ with
  $d_a\in(0,\delta_2)$,
\begin{equation}\label{ineq_Ea_upper-lower}
|\mathcal E^u_a|\leq 
\frac{C_2(u,\Omega,p)}{|\log d_a|}
\end{equation}
and 
\begin{equation}\label{eq:Vato0}
\|V^u_a\|_{H^1(\Omega\setminus S_a)}\leq 
\frac{C_2(u,\Omega,p)}{\sqrt{|\log d_a|}}.
\end{equation}
In particular, for every $u\in E(\lambda_n)$,
\begin{equation}\label{eq:Ea-to-zero}
  \mathcal E^u_a\to0\quad\text{and}\quad
\|V^u_a\|_{H^1(\Omega\setminus S_a)}\to 0
  \quad\text{as }
\mathop{\rm dist}(a,\partial\Omega) \to0.
\end{equation}
\end{proposition}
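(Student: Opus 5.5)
The plan is to compare $V^u_a$ with the explicit competitor $v_0:=u\,\eta_{d_a}(\cdot-P_a)$, where $\eta_{d_a}$ is the cut-off from \eqref{eq:def-cutoff}, and to combine this with Proposition \ref{prop_La}.

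First, $v_0$ is admissible in \eqref{eq:E_a}. Indeed, $S_a\subset\overline{D(P_a,d_a)}$, so $v_0=u$ in a neighbourhood of $S_a$ relative to $\Omega$, apart from the endpoint $P_a$. Hence $v_0-u=u(\eta_{d_a}(\cdot-P_a)-1)$ vanishes near $S_a$. Also $u\in H^1(\Omega)$, so $v_0-u\in H^1(\Omega)\subset\mathcal H_a(\Omega)$, since its traces on the two sides of $S_a$ both vanish.

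\textbf{Step 1 (energy of the test function).} By \eqref{eq:reg_u_n}, $u,|\nabla u|\in L^\infty(\Omega)$. Together with \eqref{eq:int-cut}, this gives
\begin{equation*}
q_a(v_0,v_0)\le 2\|u\|_{L^\infty}^2\frac{4\pi}{|\log d_a|}+2\|\nabla u\|_{L^\infty}^2\pi d_a+\|p\|_{L^\infty}\|u\|_{L^\infty}^2\pi d_a\le \frac{C}{|\log d_a|}.
\end{equation*}
Proposition \ref{prop_La} then yields $|L^u_a(v_0)|\le C_1\|v_0\|_{H^1}/\sqrt{|\log d_a|}\le C/|\log d_a|$. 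Since $\mathcal E^u_a\le J^u_a(v_0)$, this gives the upper bound $\mathcal E^u_a\le C/|\log d_a|$.

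\textbf{Step 2 (norm of $V^u_a$).} Write $V^u_a=v_0+\phi$ with $\phi:=V^u_a-v_0\in\mathcal H_a(\Omega)$. Testing \eqref{eq:eq_Va} with $\phi$ gives $q_a(v_0+\phi,\phi)=L^u_a(\phi)$. Using the coercivity of $q_a$ on $H^1(\Omega\setminus S_a)$, which holds by \eqref{hp_p} with a constant depending only on $p$, we get
\begin{equation*}
\|\phi\|^2_{H^1(\Omega\setminus S_a)}\le C\big(\|v_0\|_{H^1}+\|L^u_a\|_{(H^1(\Omega\setminus S_a))^*}\big)\|\phi\|_{H^1},
\end{equation*}
so that $\|\phi\|_{H^1}\le C/\sqrt{|\log d_a|}$. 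This yields \eqref{eq:Vato0}.

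\textbf{Step 3 (lower bound).} Again using \eqref{eq:eq_Va}, compute
\begin{equation*}
\mathcal E^u_a=J^u_a(V^u_a)=\tfrac12 q_a(V^u_a,V^u_a)-L^u_a(V^u_a).
\end{equation*}
Then
\begin{equation*}
\mathcal E^u_a\ge -\|L^u_a\|_{*}\,\|V^u_a\|_{H^1}\ge -\frac{C_1 C}{|\log d_a|}.
\end{equation*}
This completes \eqref{ineq_Ea_upper-lower}, and \eqref{eq:Ea-to-zero} follows immediately.

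The only delicate point is the admissibility check in the first paragraph, namely that $S_a$ lies in the region where $\eta_{d_a}(\cdot-P_a)\equiv1$. This holds because $|x-P_a|\le d_a$ on $S_a$. There is a slight issue at the boundary $|x-P_a|=d_a$, that is, at the point $a$ itself. To handle it, I would use $\eta_{2d_a}$ instead, which only changes the constants, or simply note that $\eta\equiv1$ on the closed disk by continuity. Finally, take $\delta_2\le\delta_1$ small enough that $\sqrt{2d_a}<1$.
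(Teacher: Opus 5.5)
Your proof is correct and follows essentially the paper's route: the same competitor $u\,\eta_{d_a}(\cdot-P_a)$ gives the upper bound, and Proposition \ref{prop_La} together with the coercivity of $q_a$ gives the rest. The only (immaterial) differences are that you bound $\|V^u_a\|_{H^1(\Omega\setminus S_a)}$ via the splitting $V^u_a=v_0+\phi$ and then deduce the lower bound from $\mathcal E^u_a\ge -\|L^u_a\|_*\|V^u_a\|$, whereas the paper gets both at once from $q_a(V^u_a,V^u_a)=2\mathcal E^u_a+2L^u_a(V^u_a)$ and Young's inequality. One wording slip: $H^1(\Omega)\not\subset\mathcal H_a(\Omega)$ in general, but what you actually use is correct, namely that $v_0-u\in H^1(\Omega)$ has zero trace on $S_a$ because $\eta_{d_a}(\cdot-P_a)\equiv1$ on $\overline{D(P_a,d_a)}\supset S_a$.
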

\begin{proof}
  For every $r\in(0,1)$, let $\eta_{r} \in W^{1,\infty}(\R^2)$ be
  defined in \eqref{eq:def-cutoff}.
For any $u\in E(\lambda_n)$ and $a\in\Omega$, the function 
\begin{equation*}
  v(x):=u(x)\eta_{d_a}(x-P_a), \quad x\in\Omega,
\end{equation*}
satisfies
\begin{equation*}
v\in H^1(\Omega)\quad\text{and}\quad 
    v-u\in \mathcal H_a(\Omega).
  \end{equation*}
  Hence, by \eqref{eq:E_a}, \eqref{eq:int-cut}, and the fact that
  $u,|\nabla u|\in L^\infty(\Omega)$ in view of \eqref{eq:reg_u_n},
  \begin{align*}
    \mathcal E^u_a&\leq J^{a,u}_{\Omega,p}(v)\\
                  &=\frac12\int_{\Omega\setminus S_a}|\nabla
                  u(x)|^2 \eta_{d_a}^2(x-P_a)\,dx+
                  \frac12\int_{\Omega\setminus S_a}u^2(x)|\nabla
                  \eta_{d_a}(x-P_a)|^2\,dx\\
    &\quad +
                  \int_{\Omega\setminus S_a}u(x)
                  \eta_{d_a}(x-P_a)\nabla u(x)\cdot 
      \nabla \eta_{d_a}(x-P_a)\,dx
      +
                 \frac12  \int_{\Omega\setminus
      S_a}p(x)u^2(x)\eta_{d_a}^2
      (x-P_a)\,dx\\
    &\quad- \int_{\Omega\setminus S_a}\nabla u(x)
\cdot ( \eta_{d_a}(x-P_a)\nabla u(x)+u(x)
      \nabla \eta_{d_a}(x-P_a))\,dx \\
    &\quad+\lambda_n
       \int_{\Omega\setminus
      S_a}p(x)u^2(x)\eta_{d_a}
      (x-P_a)\,dx\\
&\leq \left(\|\nabla u\|^2_{L^\infty(\Omega)}+\|
      u\|^2_{L^\infty(\Omega)}\right)
      \left(\Big(\norm{p}_{L^\infty(\Omega)}(\lambda_n+\tfrac12)+2\Big)\pi d_a+\frac{4\pi}{|\log d_a|}
+\frac12 \sqrt{\pi d_a}\sqrt \frac{4\pi}{|\log
                      d_a|}\right)\\
    &\leq \frac{C(u,\Omega,p)}{|\log d_a|} 
  \end{align*}
  provided $d_a$ is sufficiently small (independently of $u$), for
  some constant $C(u,\Omega,p)>0$ depending only on $u,\Omega,p$.

On the other hand,  
 by \eqref{hp_p} and \eqref{eq:def_Va}
\begin{align*}
  \min\{1,c\}\|V^u_a\|_{H^1(\Omega\setminus S_a)}^2&\leq q^a_{\Omega,p}(V^u_a,V^u_a)=
  2\mathcal E^u_a+2 L^u_a(V^u_a)\\
  &\leq 2 \mathcal E^u_a +
  2 \|L^u_a\|_{(H^1(\Omega\setminus S_a))^*}\|V^u_a\|_{H^1(\Omega\setminus S_a)}\\
  &\leq
   2 \mathcal E^u_a+\tfrac{2}{\min\{1,c\}} \|L^u_a\|_{(H^1(\Omega\setminus
    S_a))^*}^2+\tfrac{\min\{1,c\}}2  \|V^u_a\|_{H^1(\Omega\setminus S_a)}^2,
\end{align*}
so that 
\begin{equation}\label{eq:est-Ea+La}
   \mathcal E^u_a+\tfrac1{\min\{1,c\}}\|L^u_a\|_{(H^1(\Omega\setminus
    S_a))^*}^2\geq \tfrac{\min\{1,c\}}4 \|V^u_a\|_{H^1(\Omega\setminus S_a)}^2\geq0.
\end{equation}
The above estimate implies that, if $d_a\in(0,\delta_1)$, then 
\begin{equation*}
  \mathcal E^u_a \geq -\tfrac1{\min\{1,c\}}\|L^u_a\|_{(H^1(\Omega\setminus
    S_a))^*}^2\geq -\frac{C_1^2(u,\Omega,p)}{\min\{1,c\}|\log d_a|}
\end{equation*}
in view of \eqref{eq_norm_La}. Estimate \eqref{ineq_Ea_upper-lower} is
thereby proved.

To conclude, we observe that \eqref{eq:Vato0} follows from
\eqref{eq:est-Ea+La}, taking into account \eqref{eq_norm_La} and
\eqref{ineq_Ea_upper-lower}. Finally \eqref{eq:Ea-to-zero} is a direct
consequence of \eqref{ineq_Ea_upper-lower} and \eqref{eq:Vato0}.
\end{proof}

The following proposition clarifies the relationship between the
quantity $\mathcal E^u_a$ and the bilinear form $r_a$.

\begin{proposition}\label{p:relation_E_r}
  For every $a\in\Omega$ and $u\in E(\lambda_n)$, let $r_a$,
  $\mathcal E^u_a$, and $V^u_a$ be defined in \eqref{eq:r-a},
  \eqref{eq:E_a}, \eqref{eq:def_Va}, and \eqref{eq:simply-notation}.
  Then, for every $u\in E(\lambda_n)$,
  \begin{equation*}
  r_a(u,u)=2\mathcal E^u_a+O\left(\|V^u_a\|_{L^2(\Omega)}^2\right)
  \quad\text{as }
\mathop{\rm dist}(a,\partial\Omega) \to0.
\end{equation*}  
\end{proposition}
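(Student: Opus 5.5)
The plan is to show that the claimed expansion is in fact an exact identity,
\begin{equation*}
  r_a(u,u)=2\mathcal E^u_a-(\lambda_n+1)\int_\Omega p\,(V^u_a)^2\,dx ,
\end{equation*}
from which the statement follows at once. By \eqref{hp_p} the last term is bounded in absolute value by $(\lambda_n+1)\|p\|_{L^\infty(\Omega)}\|V^u_a\|^2_{L^2(\Omega)}$. No asymptotic analysis is needed, only the variational characterization \eqref{eq:eq_Va} and the fact that $u$ is an eigenfunction.

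First, expand the definition \eqref{eq:r-a} with $w=u$:
\begin{equation*}
r_a(u,u)=(\lambda_n+1)\int_\Omega p\,uV^u_a\,dx-(\lambda_n+1)\int_\Omega p\,(V^u_a)^2\,dx .
\end{equation*}
Only the first term remains to be identified. For it, I would use \eqref{eq:def_La} with $v=V^u_a$, which is admissible because $L^u_a$ is defined on all of $H^1(\Omega\setminus S_a)$. This gives
\begin{equation*}
(\lambda_n+1)\int_\Omega p\,uV^u_a\,dx=q_a(u,V^u_a)-L^u_a(V^u_a).
\end{equation*}

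Next, I would test \eqref{eq:eq_Va} with $v=V^u_a-u$, which lies in $\mathcal H_a(\Omega)$ by \eqref{eq:def_Va}. This yields $q_a(V^u_a,V^u_a)-q_a(V^u_a,u)=L^u_a(V^u_a)-L^u_a(u)$. Since $u\in H^1(\Omega)$, the integral over $\Omega\setminus S_a$ coincides with the one over $\Omega$. Hence, by \eqref{eq:def_La},
\begin{equation*}
L^u_a(u)=\int_\Omega|\nabla u|^2\,dx-\lambda_n\int_\Omega p\,u^2\,dx=0,
\end{equation*}
because $u$ weakly solves \eqref{eq:eige_lapla} with $\lambda=\lambda_n$ (test the equation with $u$ itself). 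Therefore $q_a(u,V^u_a)=q_a(V^u_a,V^u_a)-L^u_a(V^u_a)$.

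Substituting this into the previous display gives
\begin{equation*}
(\lambda_n+1)\int_\Omega p\,uV^u_a\,dx=q_a(V^u_a,V^u_a)-2L^u_a(V^u_a)=2J^u_a(V^u_a)=2\mathcal E^u_a .
\end{equation*}
Combining this with the expansion of $r_a(u,u)$ above proves the exact identity.

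The main point to be careful about is legitimacy rather than difficulty. The test function $V^u_a-u$ must belong to $\mathcal H_a(\Omega)$, which holds by \eqref{eq:def_Va}. The evaluation $L^u_a(u)=0$ must use the form \eqref{eq:def_La}, which is valid thanks to the regularity \eqref{eq:reg_u_n}. Once these two checks are in place, the argument is purely algebraic and involves no error terms other than the explicit $L^2$ term.
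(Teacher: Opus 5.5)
Your proof is correct and follows the paper's own argument essentially step for step. You split $r_a(u,u)$ via \eqref{eq:r-a}, rewrite $(\lambda_n+1)\int_\Omega p\,uV^u_a\,dx$ through \eqref{eq:def_La}, test \eqref{eq:eq_Va} with $\pm(u-V^u_a)$, and use $L^u_a(u)=0$ to obtain exactly $2\mathcal E^u_a$; the paper's computation is likewise an exact identity whose only remainder is $-(\lambda_n+1)\int_\Omega p\,(V^u_a)^2\,dx$. One small correction to your closing remark: \eqref{eq:def_La} follows from \eqref{eq:cara_La} and the weak equation $-\Delta u=\lambda_n p u$ alone, so the regularity \eqref{eq:reg_u_n} is needed only for the boundary representation \eqref{eq:cara_La2}, not for this proof.
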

\begin{proof}
  In view of \eqref{eq:r-a}, for every $u\in E(\lambda_n)$
\begin{equation*}
  r_a(u,u)=(\lambda_n+1)\int_\Omega p
  V^{u}_a(u-V^{u}_a)\,dx=
  (\lambda_n+1)\int_\Omega  p\, u
  V^{u}_a\,dx+O\left(\|V_a^{u}\|_{L^2(\Omega)}^2\right)
\end{equation*}
as $\mathop{\rm dist}(a,\partial\Omega)\to 0$. 
Moreover, by \eqref{eq:def_La},
\eqref{eq:eq_Va}, and \eqref{eq:def_Va},
\begin{align*}
  (\lambda_n+1)\int_\Omega p u V_a^{u}\,dx&=
                                              q_a(u,V_a^{u})-L_a^{u}(V_a^{u})\\
  &=q_a(V_a^{u},u-V_a^{u})+q_a(V_a^{u},V_a^{u}) -L_a^{u}(V_a^{u})\\
  &=L_a^{u}(u-V_a^{u})+q_a(V_a^{u},V_a^{u}) -L_a^{u}(V_a^{u})\\
  &=L_a^{u}(u)-2L_a^{u}(V_a^{u})+2\big(\mathcal E_a^{u}+L_a^{u}(V_a^{u})\big)=2\mathcal
    E_a^{u}+L_a^{u}(u)=
    2\mathcal E_a^{u},
\end{align*}
where, in the last identity, we used the fact that $L_a^{u}(u)=0$,
because $u$ is an eigenfunction of \eqref{eq:eige_lapla} associated to
$\lambda_n$. The proof is thereby complete.
\end{proof}

  For every $a\in\Omega$, let
  \begin{equation}\label{eq:Ta}
    \mathcal T_a:E(\lambda_n)\to \mathcal H_a,\quad
    \mathcal T_a(u)=u-V^u_a,
  \end{equation}
  with $V^u_a$ as in \eqref{eq:def_Va} and \eqref{eq:simply-notation}. 
We observe that $\mathcal T_a$ is a linear operator. Moreover, the
following proposition establishes that  $\mathcal T_a$ is close to the
identity, by estimating the operator norm of the difference.
\begin{proposition}\label{p:op-norm}
  For every $a\in\Omega$, let
  \begin{equation}\label{eq:def-tau-a}
    \tau_a:=\|\mathop{\rm Id}-\mathcal T_a\|_{\mathcal
      L(E(\lambda_n),L^2(\Omega))},
  \end{equation}
  where $\mathop{\rm Id}$ is the identity map on $L^2(\Omega)$,
  $E(\lambda_n)$ is meant as a subspace of $L^2(\Omega)$, and both
  $E(\lambda_n)$ and $L^2(\Omega)$ are endowed with the scalar product
  \eqref{eq:scal-p}, so that
  \begin{equation*}
   \tau_a=\sup\{\|u-\mathcal T_a(u)\|_{L^2(\Omega,p)}:u\in
   E(\lambda_n) \text{ and }\|u\|_{L^2(\Omega,p)}=1\}.
  \end{equation*}
  Then
  \begin{equation}\label{eq:est-tau-1}
    \tau_a\leq \|p\|_{L^\infty(\Omega)}^{1/2}\sum_{j=1}^m\left\|V^{u_{n+j-1}}_a\right\|_{L^2(\Omega)},
  \end{equation}
  where $\{u_{n+j-1}\}_{j=1}^m$ is a $L^2(\Omega,p)$-orthonormal basis
  of $E(\lambda_n)$ as in \eqref{eq:basis}. Furthermore,
  \begin{equation}\label{eq:est-tau-2}
    \tau_a=O\bigg(\frac1{\sqrt{|\log d_a|}}\bigg)
  \end{equation}
as $d_a=\mathop{\rm dist}(a,\partial\Omega) \to0$.
\end{proposition}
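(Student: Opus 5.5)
The argument rests on two facts: $\mathop{\rm Id}-\mathcal T_a$ is linear on $E(\lambda_n)$, and $u\mapsto V^u_a$ is linear. Linearity of $u\mapsto V^u_a$ follows from the uniqueness of the solution to \eqref{eq:eq_Va}, since both the constraint $V-u\in\mathcal H_a(\Omega)$ and the right-hand side $L^u_a$ depend linearly on $u$. Hence $u-\mathcal T_a(u)=V^u_a$ for every $u\in E(\lambda_n)$.

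Fix $u\in E(\lambda_n)$ with $\|u\|_{L^2(\Omega,p)}=1$. Expand it in the orthonormal basis \eqref{eq:basis} as $u=\sum_{j=1}^m c_j u_{n+j-1}$. Orthonormality gives $\sum_j c_j^2=1$, so $|c_j|\le 1$ for every $j$. By linearity, $V^u_a=\sum_j c_jV^{u_{n+j-1}}_a$. Using \eqref{hp_p}, we have $\|w\|_{L^2(\Omega,p)}\le \|p\|_{L^\infty(\Omega)}^{1/2}\|w\|_{L^2(\Omega)}$, and therefore
\begin{equation*}
\|u-\mathcal T_a(u)\|_{L^2(\Omega,p)}\le \sum_{j=1}^m|c_j|\,\|V^{u_{n+j-1}}_a\|_{L^2(\Omega,p)}\le \|p\|_{L^\infty(\Omega)}^{1/2}\sum_{j=1}^m\|V^{u_{n+j-1}}_a\|_{L^2(\Omega)} .
\end{equation*}
Taking the supremum over all such $u$ yields \eqref{eq:est-tau-1}.

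For \eqref{eq:est-tau-2}, bound each $L^2$ norm by the corresponding $H^1(\Omega\setminus S_a)$ norm. Proposition \ref{prop_Ee_limit}, specifically \eqref{eq:Vato0} applied to each $u_{n+j-1}$, gives $\|V^{u_{n+j-1}}_a\|_{L^2(\Omega)}\le C_2(u_{n+j-1},\Omega,p)/\sqrt{|\log d_a|}$ whenever $d_a<\delta_2$. Summing these finitely many bounds gives $\tau_a=O(|\log d_a|^{-1/2})$. The only point that needs care is the linearity of $u\mapsto V^u_a$ via uniqueness; the rest is direct.
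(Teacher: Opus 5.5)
Your proof is correct and follows essentially the same argument as the paper. You expand $u$ in the $L^2(\Omega,p)$-orthonormal basis, use linearity to write $(\mathop{\rm Id}-\mathcal T_a)u=\sum_j c_j V^{u_{n+j-1}}_a$, bound this by the triangle inequality and $\|p\|_{L^\infty(\Omega)}^{1/2}$, and then invoke \eqref{eq:Vato0}; your justification of the linearity of $u\mapsto V^u_a$ through uniqueness in \eqref{eq:eq_Va} spells out a point the paper only asserts.
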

\begin{proof}
  We have
  \begin{align*}
    \|\mathop{\rm Id}-\mathcal T_a\|_{\mathcal
    L(E(\lambda_n),L^2(\Omega))}&=
                                  \sup_{\substack{(\alpha_1,\dots,\alpha_m)\in
                                  \R^m\\\sum_{j=1}^m
    \alpha_j^2=1}}\left\|(\mathop{\rm Id}-\mathcal T_a)\Big(\sum\nolimits_{j=1}^m
    \alpha_j u_{n+j-1}\Big)\right\|_{L^2(\Omega,p)}\\
    &=\sup_{\substack{(\alpha_1,\dots,\alpha_m)\in
                                  \R^m\\\sum_{j=1}^m
    \alpha_j^2=1}} \left\|\sum\nolimits_{j=1}^m    \alpha_j
    V_a^{u_{n+j-1}}\right\|_{L^2(\Omega,p)}\le \|p\|_{L^\infty(\Omega)}^{1/2}\sum_{j=1}^m\left\|V^{u_{n+j-1}}_a\right\|_{L^2(\Omega)},
  \end{align*}
thus proving \eqref{eq:est-tau-1}. Estimate \eqref{eq:est-tau-2}
directly follows from \eqref{eq:est-tau-1} and \eqref{eq:Vato0}.
\end{proof}

For every $j\in\N\setminus\{0\}$ and $a\in\Omega$ with small
$\mathop{\rm dist}(a,\partial\Omega)$, we denote by
$E(\lambda_j^a)\subset \mathcal H_a(\Omega)$ the eigenspace of problem
\eqref{eq:eige_a} corresponding to the $j$-th eigenvalue
$\lambda_j^a=\lambda_j^a(\Omega,p)$. Then we consider the direct sum of the eigenspaces
corresponding to the eigenvalues $\{\lambda_{n+j-1}^a\}_{j=1}^{m}$,
i.e.
\begin{equation*}
  E_a=\bigoplus_{j=1}^{m}E(\lambda_{n+j-1}^a),
\end{equation*}
and the orthogonal projection 
\begin{equation}\label{eq:projection}
  \Pi_a: \  L^2(\Omega)\to E_a\subset \mathcal H_a(\Omega)
\end{equation}
with respect to the scalar product \eqref{eq:scal-p}.

The following theorem provides an asymptotic expansion of the
eigenvalue variation $\lambda_{n+j-1}^a -\lambda_n$ as
$\mathop{\rm dist}(a,\partial\Omega)\to 0$. We provide a proof based
on the \emph{Lemma on small eigenvalues} by Y. Colin de Verdi\`ere
\cite{ColindeV1986}, whose statement is recalled in the appendix for
completeness.

  \begin{theorem}\label{theo_asymp_abstract}
    Let assumptions
    \eqref{lambda-n_multiple-m}--\eqref{lambda-n_multiple-m-bis} be
    satisfied. For every $a\in \Omega$ and $j\in\{1,\dots,m\}$,
    the $j$-th eigenbranch emanating from $\lambda_n$ can be expanded as
\begin{equation}\label{eq:expa1} 
  \lambda_{n+j-1}^a=\lambda_n+\mu_j^a+O(\tau_a^2)\quad 
  \text{as }\mathop{\rm dist}(a,\partial\Omega)\to 0,
\end{equation}
and
\begin{equation}\label{eq:expa2} 
  \lambda_{n+j-1}^a=\lambda_n+O(\tau_a)\quad 
  \text{as }\mathop{\rm dist}(a,\partial\Omega)\to 0,
\end{equation}where $\tau_a$ is defined in \eqref{eq:def-tau-a} and
$\{\mu_j^a\}_{j=1}^m=\{\mu_j^a(\Omega,p)\}_{j=1}^m$ are the eigenvalues (taken in non-decreasing
order) of the bilinear form $r_a=r_{\Omega,p,n}^a$ introduced in \eqref{eq:r-a}.
Furthermore, 
\begin{equation}\label{eq_energy_eigenfunctions-a}
  \|\mathcal T_a-\Pi_a\circ \mathcal T_a\|_{\mathcal
    L(E(\lambda_n),H^1(\Omega\setminus S_a))}
  =\sup_{\substack{u\in
      E(\lambda_n)\\
      \|u\|_{L^2(\Omega,p)}=1}}
  \|(\mathcal T_a-\Pi_a\circ \mathcal T_a)u\|_{H^1(\Omega\setminus
    S_a)} =O(\tau_a)
  \end{equation}
  as
$\mathop{\rm dist}(a,\partial\Omega)\to 0$.
\end{theorem}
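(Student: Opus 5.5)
We apply the Lemma on small eigenvalues of Colin de Verdière (recalled in the appendix) to the operator of problem \eqref{eq:eige_a}. It is convenient to work with the shifted quadratic form $q_a$ on $\mathcal H_a(\Omega)$, whose eigenvalues are $\lambda_j^a+1$, in the Hilbert space $L^2(\Omega)$ with the scalar product \eqref{eq:scal-p}. The form domain $\mathcal H_a(\Omega)$ has compact embedding by \eqref{eq:compact}. As test space we take the $m$-dimensional space $F_a:=\mathcal T_a(E(\lambda_n))\subset\mathcal H_a(\Omega)$. This is a subspace of $\mathcal H_a(\Omega)$ because $u-V_a^u\in -\mathcal H_a(\Omega)$, by \eqref{eq:def_Va}. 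By Proposition \ref{p:op-norm}, $\tau_a\to0$, so $\mathcal T_a$ is injective for small $d_a$. The spectral gap hypothesis of the lemma follows from Proposition \ref{prop_spectral_stability}: for $d_a$ small, exactly $m$ eigenvalues $\lambda^a_{n},\dots,\lambda^a_{n+m-1}$ lie in a fixed neighbourhood of $\lambda_n$, and the remaining ones stay at distance at least $\frac12\min\{\lambda_n-\lambda_{n-1},\lambda_{n+m}-\lambda_n\}$.

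The key quantity in the lemma is the defect
\[
\delta_a:=\sup\Big\{ |q_a(\mathcal T_a u,w)-(\lambda_n+1)(\mathcal T_a u,w)_{L^2(\Omega,p)}| : u\in E(\lambda_n),\ \|u\|_{L^2(\Omega,p)}=1,\ w\in\mathcal H_a(\Omega),\ \|w\|_{H^1(\Omega\setminus S_a)}=1\Big\}.
\]
For $w\in\mathcal H_a(\Omega)$, equations \eqref{eq:def_La} and \eqref{eq:eq_Va} give
\[
q_a(u-V_a^u,w)=L_a^u(w)+(\lambda_n+1)\int_\Omega p u w\,dx-L_a^u(w)=(\lambda_n+1)\int_\Omega puw\,dx ,
\]
so that
\[
q_a(\mathcal T_a u,w)-(\lambda_n+1)(\mathcal T_au,w)_{L^2(\Omega,p)}=(\lambda_n+1)\int_\Omega pV_a^u w\,dx .
\]
Hence $\delta_a\le(\lambda_n+1)\tau_a\,\|p\|^{1/2}_{L^\infty(\Omega)}$ up to constants, that is, $\delta_a=O(\tau_a)$. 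This is the structural point: $\mathcal T_a u$ is an almost-eigenfunction with residual controlled in the dual norm by $\|V_a^u\|_{L^2(\Omega)}$ only. The lemma then yields the following.
\begin{enumerate}[(i)]
\item The eigenvalues of $q_a$ restricted to $F_a$, taken relative to the $L^2(\Omega,p)$ product on $F_a$, approximate $\lambda_{n+j-1}^a+1$ with error $O(\delta_a^2)=O(\tau_a^2)$.
\item The statement \eqref{eq_energy_eigenfunctions-a} holds, namely $\|\mathcal T_a-\Pi_a\circ\mathcal T_a\|=O(\delta_a)$ in the form norm, which is equivalent to the $H^1(\Omega\setminus S_a)$ norm by \eqref{hp_p}.
\end{enumerate}

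It remains to compute the restricted eigenvalue problem in the basis $\mathcal T_a u_{n+i-1}$. Testing the identity above with $w=\mathcal T_a u'$ gives
\[
q_a(\mathcal T_au,\mathcal T_au')=(\lambda_n+1)\int_\Omega pu\,(u'-V_a^{u'})\,dx,
\]
and
\[
(\mathcal T_au,\mathcal T_au')_{L^2(\Omega,p)}=\delta_{uu'}-\int_\Omega p\big(uV_a^{u'}+V_a^uu'\big)\,dx+\int_\Omega pV_a^uV_a^{u'}\,dx .
\]
Subtracting $(\lambda_n+1)$ times the Gram matrix, the matrix $q_a-(\lambda_n+1)\,\mathrm{Gram}$ is exactly $r_a(u,u')$ of \eqref{eq:r-a}, by direct expansion and the symmetry of Remark \ref{r_r-a-symm}. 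The Gram matrix equals $I+O(\tau_a)$, and $r_a=O(\tau_a)$. The generalized eigenvalues of $\big((\lambda_n+1)\mathrm{Gram}+r_a,\ \mathrm{Gram}\big)$ are therefore $\lambda_n+1+\mathrm{eig}(\mathrm{Gram}^{-1/2}r_a\mathrm{Gram}^{-1/2})$, and these equal $\lambda_n+1+\mu_j^a+O(\tau_a^2)$ by Weyl's inequality. This proves \eqref{eq:expa1}. Finally, $|\mu_j^a|\le\|r_a\|=O(\tau_a)$, which gives \eqref{eq:expa2}.

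The main obstacle will be the last matrix step. One must check that the bound $r_a=O(\tau_a)$ actually holds; the concern is that $\int_\Omega puV_a^{u'}$ is only $O(\tau_a)$, not $O(\tau_a^2)$, and this first-order term is precisely what $\mu^a_j$ retains. One must also make sure that no stray $O(\tau_a)$ cross term survives outside $r_a$. The other point requiring care is matching the precise hypotheses and norms in the appendix version of Colin de Verdière's lemma, in particular the use of the form norm rather than the $L^2$ norm for the residual.
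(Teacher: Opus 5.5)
Your derivation of \eqref{eq:expa1}--\eqref{eq:expa2} follows the paper's route. It rests on three ingredients: the residual identity $\tilde q_a(\mathcal T_au,w)=(\lambda_n+1)\int_\Omega pV_a^uw\,dx$ for $\tilde q_a=q_a-(\lambda_n+1)(\cdot,\cdot)_{L^2(\Omega,p)}$, the exact identity $\tilde q_a(\mathcal T_au,\mathcal T_au')=r_a(u,u')$, and $\mathsf C_a=\mathbb I+O(\tau_a)$. Your symmetrization $\mathsf C_a^{-1/2}\mathsf R_a\mathsf C_a^{-1/2}$ followed by Weyl's inequality is arguably a cleaner finish than working with $\mathsf C_a^{-1}\mathsf R_a$.

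The worry in your last paragraph is unfounded. Directly from \eqref{eq:r-a}, $|r_a(u,u')|\le(\lambda_n+1)\tau_a(1+\tau_a)$ for normalized $u,u'$. Since the identity for $\tilde q_a$ on $\mathcal T_a(E(\lambda_n))$ is exact, no cross term survives.

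Two adjustments are needed to match Lemma \ref{lemma:CdV}:
\begin{itemize}
\item Apply the lemma to $\tilde q_a$, not to $q_a$, so that (H1) holds with the cluster centred at $0$.
\item In (H2), both $w$ and $\varphi=\mathcal T_au$ must be normalized in $L^2(\Omega,p)$; you normalized $w$ in $H^1$ and $u$ rather than $\mathcal T_au$. Your identity gives the required stronger bound anyway: $\delta\le(\lambda_n+1)\tau_a/(1-\tau_a)$, using $\|\mathcal T_au\|_{L^2(\Omega,p)}\ge(1-\tau_a)\|u\|_{L^2(\Omega,p)}$.
\end{itemize}

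The genuine gap is your item (ii). Lemma \ref{lemma:CdV} controls the projection error only in the $H$-norm, see \eqref{eq:cdv-eigenfunctions}. For $f:=(\mathcal T_a-\Pi_a\circ\mathcal T_a)u$ with $\|u\|_{L^2(\Omega,p)}=1$, it gives $\|f\|_{L^2(\Omega,p)}=O(\tau_a)$, but nothing in the form norm. So \eqref{eq_energy_eigenfunctions-a} is not a consequence of the lemma, and you supply no other argument for it. An additional energy estimate is required, as in the paper, and it is short.

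Since $f\in\mathcal H_a(\Omega)$ is $L^2(\Omega,p)$-orthogonal to $E_a$, which is spanned by eigenfunctions, both $q_a(\Pi_a\mathcal T_au,f)$ and $\int_\Omega p\,\Pi_a(\mathcal T_au)f\,dx$ vanish. Writing $u=\Pi_a\mathcal T_au+f+V_a^u$ and using \eqref{eq:eq-Tau}, we get
\begin{align*}
  q_a(f,f)&=q_a(\mathcal T_au,f)=(\lambda_n+1)\int_\Omega p\,uf\,dx
  =(\lambda_n+1)\Big(\int_\Omega pV_a^uf\,dx+\int_\Omega pf^2\,dx\Big)\\
  &\le(\lambda_n+1)\big(\tau_a\|f\|_{L^2(\Omega,p)}+\|f\|_{L^2(\Omega,p)}^2\big)=O(\tau_a^2).
\end{align*}
Coercivity of $q_a$, which follows from \eqref{hp_p}, then yields the $H^1(\Omega\setminus S_a)$ bound. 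Alternatively, you could prove a dual-norm version of the lemma via the spectral resolution on $E_a^\perp$, but that is not the lemma you are quoting.
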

\begin{proof}
  For every $a\in\Omega$, let $\mathcal T_a$ be defined in \eqref{eq:Ta}.
  We apply Lemma \ref{lemma:CdV} with
  \begin{align*}
    & \mathcal D=\mathcal H_a(\Omega),\quad H=L^2(\Omega)
      \text{ endowed with the scalar product \eqref{eq:scal-p}},\\
    & q(u,v)=\tilde q_a(u,v)=q_a(u,v)-(\lambda_n+1)\int_{\Omega}p uv\,dx,\\
    & \nu_j=\lambda_j^a-\lambda_n,\quad 
      F=\mathcal T_a(E(\lambda_n)).
  \end{align*}
  We recall from \eqref{eq:compact} that the embedding
  $\mathcal H_a(\Omega) \hookrightarrow \hookrightarrow L^2(\Omega)$ is
  compact.  Moreover, by Proposition \ref{p:op-norm},
  $\|\mathop{\rm Id}-\mathcal T_a\|_{\mathcal
    L(E(\lambda_n),L^2(\Omega))}\to 0$ as
  $\mathop{\rm dist}(a,\partial\Omega)\to 0$. Hence, if
  $\{u_{n+j-1}\}_{j=1}^m$ is a $L^2(\Omega,p)$-orthonormal basis of
  $E(\lambda_n)$ as in \eqref{eq:basis},
  $\{\mathcal T_a(u_{n+j-1})\}_{j=1}^m$ are linearly independent in
  $L^2(\Omega)$ provided $\mathop{\rm dist}(a,\partial\Omega)$ is
  sufficiently small, so that $F=\mathcal T_a(E(\lambda_n))$ is a
  $m$-dimensional subspace of $\mathcal H_a(\Omega)$.

Since $\lambda_n$ has multiplicity
$m$ and, by Proposition \ref{prop_spectral_stability}, $\lambda_k^a\to\lambda_k$ as $d_a\to0$
for every $k\in\N\setminus\{0\}$, assumption (H1) in Lemma
\ref{lemma:CdV} is satisfied, provided
$\mathop{\rm dist}(a,\partial\Omega)$ is small enough, with the choice 
\begin{equation*}
	\gamma:={\frac12}\min\{\lambda_n-\lambda_{n-1},\lambda_{n+m}-\lambda_n\}
\end{equation*}
when $n\geq2$, whereas $\gamma:={ \frac12}
	\left(\lambda_{2}-\lambda_{1}\right)$ when $n=1$ (recall that
        $\lambda_1=0$, with multiplicity $1$).

Let us now estimate the value $\delta$ in assumption (H2). For every
$v\in\mathcal H_a(\Omega)$ and $u\in E(\lambda_n)$, by \eqref{eq:eq_Va} and \eqref{eq:def_La}
  \begin{align*}
|\tilde q_a(\mathcal T_a(u),v)|&=
   \left| q_a(u-V^u_a,v)-(\lambda_n+1)\int_{\Omega} p (u-V^u_a)v\,dx\right|
                      =(\lambda_n+1)\left|\int_\Omega p V_a^uv\,dx\right|\\
    &\leq (\lambda_n+1) \|(\mathop{\rm Id}-\mathcal
      T_a)(u)\|_{L^2(\Omega,p)}\|v\|_{L^2(\Omega,p)}\\
&      \leq (\lambda_n+1)\tau_a\|u\|_{L^2(\Omega,p)}\|v\|_{L^2(\Omega,p)}
      \leq (\lambda_n+1)\frac{\tau_a}{1-\tau_a}\|\mathcal T_a (u)\|_{L^2(\Omega,p)}\|v\|_{L^2(\Omega,p)},
  \end{align*}
  where $\tau_a$ is defined in \eqref{eq:def-tau-a}; in the last
  inequality above, we have used the fact that
  \begin{align*}
    \|\mathcal T_a (u)\|_{L^2(\Omega,p)}&\geq
    \|u\|_{L^2(\Omega,p)}-\|(\mathop{\rm Id}-\mathcal T_a) (u)\|_{L^2(\Omega,p)}\\
&  \geq
    \|u\|_{L^2(\Omega,p)}-\tau_a\|u\|_{L^2(\Omega,p)}=
  (1-\tau_a)\|u\|_{L^2(\Omega,p)}.
  \end{align*}
  Hence
  \begin{align}\notag
    \delta:&=\sup\{|\tilde q_a(\mathcal T_a(u),v)|\colon v\in
             \mathcal H_a(\Omega),u\in
     E(\lambda_n),\|v\|_{L^2(\Omega,p)}
     =\|\mathcal
             T_a(u)\|_{L^2(\Omega,p)}=1\}\\
    \label{eq:scelt-delta}&\leq
     (\lambda_n+1) \frac{\tau_a}{1-\tau_a}
  \end{align}
        and assumption (H2) of Lemma \ref{lemma:CdV} is satisfied
        provided $\mathop{\rm dist}(a,\partial\Omega)$ is small
        enough, in view of  \eqref{eq:est-tau-2}.

Therefore, we can apply Lemma \ref{lemma:CdV} and infer the
following expansion for every $j\in\{1,\dots,m\}$:
\begin{equation}\label{eq:asympt_proof}
  \lambda_{N+j-1}^a=\lambda_n+\xi_j^a +O(\tau_a^2)\quad\text{as }
  \mathop{\rm dist}(a,\partial\Omega)\to0,
\end{equation}
where $\{ \xi_j^a \}_{i=1}^m$  are the eigenvalues (in ascending order)
of the restriction of $\tilde q_a$ to $\mathcal T_a(E(\lambda_n))$.

In order to characterize the values $\{\xi_j^a\}_{i=1,\dots,m}$
appearing in \eqref{eq:asympt_proof}, we consider the bilinear
form $r_a$ defined in \eqref{eq:r-a} and observe that, by
\eqref{eq:eq_Va}, \eqref{eq:def_La}, and \eqref{eq:r-a}, 
 \begin{align*}
      \tilde q_a&(\mathcal T_a(u), \mathcal T_a(w))=
       q_a(u-V^u_a,w-V^w_a)-(\lambda_n+1)\int_\Omega p (u-V^u_a)(w-V^w_a)\,dx\\
    &=q_a(u,w-V^w_a)- q_a(V^u_a,w-V^w_a)-  (\lambda_n+1)\int_\Omega p
      u(w-V^w_a)\,dx +
      (\lambda_n+1)\int_\Omega  p V^u_a(w-V^w_a)\,dx\\
   &=q_a(u,w-V^w_a)-L^u_a(w-V^w_a)-  (\lambda_n+1)\int_\Omega p
   u(w-V^w_a)\,dx+
     (\lambda_n+1)\int_\Omega p V^u_a(w-V^w_a)\,dx\\
   &=(\lambda_n+1)\int_\Omega  p V^u_a(w-V^w_a)\,dx=r_a(u,w)
    \end{align*}
    for every $u,w\in E(\lambda_n)$.  Hence, the bilinear form
    $\tilde q_a$ restricted to $\mathcal T_a(E(\lambda_n))$ coincides
    with the bilinear form
    \begin{equation*}
      (\varphi,\psi)\in  \mathcal T_a(E(\lambda_n)) \times \mathcal T_a(E(\lambda_n)) \ \mapsto\  r_a\big( \mathcal T_a^{-1}(\varphi),
      \mathcal T_a^{-1} (\psi)\big).
  \end{equation*}
  It follows that  the eigenvalues $\{\xi_j^a\}_{j=1,\dots,m}$ of
  $\tilde q_a$
  restricted to $\mathcal T_a(E(\lambda_n))$ are precisely the eigenvalues of the
  $m\times m$ symmetric real matrix
  \begin{equation}\label{eq:xi_mu_1}
         \mathsf{B}_a=\mathsf{C}_a^{-1}\mathsf{R}_a,
    \end{equation}
    where
    \begin{equation*}
  \mathsf{R}_a=\big(r_a(u_{n+j-1}, u_{n+k-1})\big)_{jk}
\end{equation*}
and 
  \begin{equation*}
    \mathsf{C}_a=\big(c_{jk}^a \big)_{jk},\quad
    c_{jk}^a=\int_\Omega p\,\mathcal T_a(u_{n+j-1}) \mathcal T_a(u_{n+k-1})\,dx,
\end{equation*}
being   $\{u_{n+j-1}\}_{j=1}^m$ as in \eqref{eq:basis}. From
\eqref{eq:basis} and \eqref{eq:def-tau-a} we have
\begin{equation*}
  \mathsf{C}_a=\mathbb{I}+O(\tau_a)\quad\text{as }
  \mathop{\rm dist}(a,\partial\Omega)\to0,
\end{equation*}
with $\mathbb{I}$ being the identity $m\times m$ matrix, in the sense
that all the entries of the matrix $\mathsf{C}_a-\mathbb{I}$ are
$O(\tau_a)$ as $\mathop{\rm dist}(a,\partial\Omega)\to0$. Hence
$\mathsf{C}_a^{-1}=\mathbb{I}+O(\tau_a)$ as 
  $\mathop{\rm dist}(a,\partial\Omega)\to0$.
Furthermore, by \eqref{eq:r-a} and \eqref{eq:def-tau-a}, 
\begin{equation}\label{eq:expRa}
  \mathsf{R}_a=O(\tau_a)\quad\text{as }
  \mathop{\rm dist}(a,\partial\Omega)\to0,
\end{equation}
i.e.  all the entries of the matrix $\mathsf{R}_a$ are
$O(\tau_a)$ as $\mathop{\rm dist}(a,\partial\Omega)\to0$.
It then follows  from
\eqref{eq:xi_mu_1}  that
\begin{equation}\label{eq:exp-matr}
  \mathsf{B}_a=(\mathbb{I}+O(\tau_a))\mathsf{R}_a=\mathsf{R}_a+O(\tau_a^2) \quad\text{as }
  \mathop{\rm dist}(a,\partial\Omega)\to0.
\end{equation}
Since the eigenvalues$\{\mu_j^a\}_{j=1}^m$ of the bilinear form $r_a$
coincide with the eigenvalues of the matrix $\mathsf{R}_a$, the min-max
characterization of eigenvalues and  \eqref{eq:exp-matr} yield the
expansion 
\begin{equation}\label{eq:expa-xi-mu}
  \xi_j^a=\mu_j^a +O(\tau_a^2) \quad\text{as }
  \mathop{\rm dist}(a,\partial\Omega)\to0,
\end{equation}
for all $j\in \{1,\dots,m\}$. Combining \eqref{eq:asympt_proof} with
\eqref{eq:expa-xi-mu}, we finally obtain \eqref{eq:expa1}, which also
implies \eqref{eq:expa2} due to the fact that
\begin{equation*}
  \mu_j^a=O(\tau_a)\quad\text{as }
  \mathop{\rm dist}(a,\partial\Omega)\to0,
\end{equation*}
by \eqref{eq:expRa} and the min-max
characterization of eigenvalues.

To prove \eqref{eq_energy_eigenfunctions-a}, we first observe that
estimate \eqref{eq:cdv-eigenfunctions} in
Lemma \ref{lemma:CdV} and \eqref{eq:scelt-delta} directly imply 
\begin{equation}\label{eq:energy_eige-L2}
   \|\mathcal T_a-\Pi_a\circ \mathcal T_a\|_{\mathcal
      L(E(\lambda_n),L^2(\Omega))}=
    O(\tau_a)\quad\text{as }\mathop{\rm dist}(a,\partial\Omega)\to 0,
  \end{equation}
  where in the operator norm $\|\mathcal T_a-\Pi_a\circ \mathcal
    T_a\|_{\mathcal L(E(\lambda_n),L^2(\Omega))}$ we mean that both
    $E(\lambda_n)$ and  $L^2(\Omega)$ are endowed
  with the scalar product \eqref{eq:scal-p}.
  Moreover, for every $u\in E(\lambda_n)$ and $\varphi\in \mathcal
  H_a(\Omega)$, \eqref{eq:eq_Va} and \eqref{eq:def_La} yield 
  \begin{equation}\label{eq:eq-Tau}
    q_a(\mathcal T_a(u),\varphi) =q_a(u-V^u_a,\varphi)=q_a(u,
    \varphi)-L^u_a(\varphi)=(\lambda_n+1)\int_\Omega p u\varphi\,dx.
  \end{equation}
For every $u\in E(\lambda_n)$, we denote $f_a^u:= \mathcal T_a
u-\Pi_a(\mathcal T_a u)\in\mathcal H_a(\Omega)$.
From \eqref{eq:eq-Tau} it follows that, for every $u\in E(\lambda_n)$,
  \begin{align}
   \notag q_a(f_a^u, f_a^u)&=
    (\lambda_n+1)\int_\Omega p u f_a^u \,dx-
    q_a(\Pi_a(\mathcal T_a u), f_a^u)\\
   \notag &=(\lambda_n+1)\int_\Omega p  V^u_a f^u_a\,dx+
      (\lambda_n+1)\int_\Omega p|f^u_a|^2\,dx\\
   \label{eq:st-qff} &\quad +
       (\lambda_n+1)\int_\Omega p  \Pi_a(\mathcal T_a(u)) f^u_a\,dx-
    q_a(\Pi_a(\mathcal T_a( u)), f_a^u).
  \end{align}
  We have
  \begin{multline}\label{eq:st-qff1}
    \left| \int_\Omega p V^u_a f^u_a\,dx+
      \int_\Omega p |f^u_a|^2\,dx\right|=
    \left| \int_\Omega p  (u-\mathcal T_a(u)) f^u_a\,dx+
      \int_\Omega  p |f^u_a|^2\,dx\right|\\
    \leq 
    \tau_a
    \|\mathcal T_a-\Pi_a\circ \mathcal T_a\|_{\mathcal
      L(E(\lambda_n),L^2(\Omega))}\|u\|_{L^2(\Omega,p)}^2
      +    \|\mathcal T_a-\Pi_a\circ \mathcal T_a\|_{\mathcal
      L(E(\lambda_n),L^2(\Omega))}^2\|u\|_{L^2(\Omega,p)}^2.
  \end{multline}
  On the other hand, let us consider a basis $\{u_{n+j-1}^a\}_{j=1}^m$
  of $E_a$, orthonormal with respect to the scalar product
  \eqref{eq:scal-p}, such that $u_{n+j-1}^a$ is an eigenfunction of
  \eqref{eq:eige_a} associated to $\lambda_{n+j-1}^a$. For any fixed
  $u\in E(\lambda_n)$, we define
  \begin{equation*}
    \alpha^a_j=(\Pi_a( \mathcal T_a(u)), u_{n+j-1}^a)_{L^2(\Omega,p)}\quad
    \text{for all $j\in\{1,\dots,m\}$,}
  \end{equation*}
  so that
  \begin{equation*}
  \Pi_a( \mathcal T_a(u))=\sum_{j=1}^m \alpha^a_j
  u_{n+j-1}^a\quad\text{and}\quad
  \|\Pi_a( \mathcal T_a(u))\|_{L^2(\Omega,p)}^2=\sum_{j=1}^m |\alpha^a_j|^2.
\end{equation*}
Hence, by \eqref{eq:expa2},
\begin{align}
\notag  \bigg|(\lambda_n+1)\int_\Omega & p\Pi_a(\mathcal T_a(u)) f^u_a\,dx-
    q_a(\Pi_a(\mathcal T_a( u)),
  f_a^u)\bigg|=\bigg|\sum_{j=1}^m\alpha_j^a(\lambda_n-\lambda_{n+j-1}^a)\int_\Omega
                p f^u_a  u_{n+j-1}^a\,dx\bigg|\\
 \notag &\leq 
    \Big(\max_{j\in\{1,\dots,m\}}|\lambda_n-\lambda_{n+j-1}^a|\Big)\|\Pi_a(\mathcal
    T_a(u)) \|_{L^2(\Omega,p)}\sqrt{m}
    \|f^u_a\|_{L^2(\Omega,p)}\\
  \label{eq:st-qff2} &\leq C
                       \tau_a(1+\tau_a)
                       \|\mathcal T_a-\Pi_a\circ \mathcal T_a\|_{\mathcal
      L(E(\lambda_n),L^2(\Omega))}\|u\|_{L^2(\Omega,p)}^2
\end{align}
for some constant $C>0$ independent of $a$ and $u$, provided $\mathop{\rm
  dist}(a,\partial\Omega)$ is sufficiently small.
From \eqref{eq:st-qff}, \eqref{eq:st-qff1}, \eqref{eq:st-qff2}, \eqref{eq:energy_eige-L2}
it follows that
\begin{align*}
     \|\mathcal T_a-\Pi_a\circ \mathcal T_a&\|_{\mathcal
      L(E(\lambda_n),H^1(\Omega\setminus S_a))}=\sup_{\substack{u\in
                                                 E(\lambda_n)\\
  \|u\|_{L^2(\Omega,p)}=1}}
  \|(\mathcal T_a-\Pi_a\circ \mathcal T_a)u\|_{H^1(\Omega\setminus
  S_a)}\\
  &\leq \frac1{\sqrt{\min\{1,\inf_\Omega p\}}}
  \sup_{\substack{u\in E(\lambda_n)\\\|u\|_{L^2(\Omega,p)}}=1}\sqrt{
  q_a(f_a^u, f_a^u)}=O(\tau_a)
  \quad\text{as }\mathop{\rm dist}(a,\partial\Omega)\to 0,
\end{align*}
thus proving \eqref{eq_energy_eigenfunctions-a}.
\end{proof}

As a first notable consequence of Theorem \ref{theo_asymp_abstract},
we are now in a position to prove Theorem \ref{t:rough_estimate}.
\begin{proof}[Proof of Theorem \ref{t:rough_estimate}]
  It immediately follows by combining \eqref{eq:expa2} with \eqref{eq:est-tau-2}.
\end{proof}

\subsection{The case of simple eigenvalues}
In the case of simple eigenvalues, Theorem \ref{theo_asymp_abstract} can be formulated as follows.
Let $n \in \N\setminus\{0\}$ be such that \eqref{hp_la_0n_simple} is
satisfied, i.e.
$\lambda_n=\lambda_n(\Omega,p)$ is simple
as an eigenvalue of \eqref{eq:eige_lapla}.  The stability result
established in Proposition \ref{prop_spectral_stability} implies that
$\lambda_n^a=\lambda_n^a(\Omega,p)$ is  simple as an eigenvalue of
\eqref{eq:eige_equation_a} as well, provided
$\mathop{\rm dist}(a,\partial\Omega)$ is sufficiently small.

Let $u_n$ be
an eigenfunction associated to $\lambda_n$ such that
\begin{equation*}
 \|u_n\|_{L^2(\Omega,p)}=1. 
\end{equation*}
For every $a\in\Omega$ with small $\mathop{\rm
  dist}(a,\partial\Omega)$, let $v_n^a\in\mathcal H_a(\Omega)\setminus\{0\}$
be the unique eigenfunction of \eqref{eq:eige_a} associated to
$\lambda_n^a$ satisfying
\begin{equation*}
  \int_\Omega p|v_n^a|^2\,dx=1\quad\text{and}\quad
  \int_\Omega pv_n^a u_n\,dx\geq 0.
\end{equation*}
In this case, the orthogonal projection $\Pi_a$ introduced in
\eqref{eq:projection} acts as
\begin{equation*}
  \Pi_a: \  L^2(\Omega)\to \mathcal H_a(\Omega),\quad 
  w \mapsto \left(\int_\Omega p w v_n^a\,dx\right)v_n^a.
\end{equation*}
The following result is a more complete version of Theorem \ref{th:simple}. 

 \begin{corollary}\label{cor:simple}
Let assumption \eqref{hp_la_0n_simple} be satisfied. Then
\begin{equation}
  \label{eq:expa-Esimple}
  \lambda_n^a -\lambda_n=2\mathcal
  E_a^{u_n}+O\left(\|V_a^{u_n}\|_{L^2(\Omega)}^2
  \right)
  \quad \text{as  }\mathop{\rm
    dist}(a,\partial\Omega)\to 0,
\end{equation}
where $V_a^{u_n}$ and $\mathcal E^{u_n}_a$ are introduced in
\eqref{eq:def_Va} and \eqref{eq:E_a} respectively (see also
\eqref{eq:simply-notation}).  Furthermore, as
$\mathop{\rm dist}(a,\partial\Omega)\to 0$,
\begin{align}\label{eq_energy_eigenfunctions-a-simple}
  & \|u_n-V_a^{u_n}-\Pi_a(u_n-V_a^{u_n})\|_{H^1(\Omega\setminus S_a)}=
    O\left(\|V_a^{u_n}\|_{L^2(\Omega)}\right),\\
  &\label{eq_energy_eigenfunctions3-a-simple}\norm{\Pi_a(u_n-V_a^{u_n})}^2_{L^2(\Omega)}
    =1+O\left(\|V_a^{u_n}\|_{L^2(\Omega)}\right).
\end{align}
\end{corollary}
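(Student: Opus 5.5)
The plan is to specialize Theorem~\ref{theo_asymp_abstract} to $m=1$ and then sharpen the remainder from $O(\tau_a^2)$ to $O(\|V_a^{u_n}\|_{L^2(\Omega)}^2)$.

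\emph{Step 1: the error scale.} When $m=1$, estimate \eqref{eq:est-tau-1} of Proposition~\ref{p:op-norm} gives
\[
\tau_a\le \|p\|_{L^\infty(\Omega)}^{1/2}\|V_a^{u_n}\|_{L^2(\Omega)}.
\]
Conversely, testing the supremum defining $\tau_a$ with $u=u_n$ gives $\tau_a\ge \|V^{u_n}_a\|_{L^2(\Omega,p)}$. Hence $\tau_a$ is comparable to $\|V_a^{u_n}\|_{L^2(\Omega)}$, and every $O(\tau_a)$ or $O(\tau_a^2)$ in Theorem~\ref{theo_asymp_abstract} may be replaced by $O(\|V_a^{u_n}\|_{L^2(\Omega)})$ or $O(\|V_a^{u_n}\|_{L^2(\Omega)}^2)$, respectively.

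\emph{Step 2: the eigenvalue expansion \eqref{eq:expa-Esimple}.} The form $r_a$ acts on the one-dimensional space spanned by $u_n$, and $\|u_n\|_{L^2(\Omega,p)}=1$, so its unique eigenvalue is $\mu_1^a=r_a(u_n,u_n)$. Proposition~\ref{p:relation_E_r} gives
\[
r_a(u_n,u_n)=2\mathcal E_a^{u_n}+O\big(\|V_a^{u_n}\|^2_{L^2(\Omega)}\big).
\]
Inserting this into \eqref{eq:expa1} and using Step 1 yields \eqref{eq:expa-Esimple}. This also proves Theorem~\ref{th:simple}.

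\emph{Step 3: the eigenfunction estimates.} Since $E(\lambda_n)$ is spanned by $u_n$ and $\mathcal T_a u_n=u_n-V_a^{u_n}$, the operator norm in \eqref{eq_energy_eigenfunctions-a} is exactly $\|u_n-V_a^{u_n}-\Pi_a(u_n-V_a^{u_n})\|_{H^1(\Omega\setminus S_a)}$. So \eqref{eq_energy_eigenfunctions-a} together with Step 1 gives \eqref{eq_energy_eigenfunctions-a-simple}.

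For \eqref{eq_energy_eigenfunctions3-a-simple}, I use that $\Pi_a$ is an orthogonal projection in $L^2(\Omega,p)$ and write
\[
\|\Pi_a\mathcal T_a u_n\|^2_{L^2(\Omega,p)}=\|\mathcal T_au_n\|^2_{L^2(\Omega,p)}-\|\mathcal T_au_n-\Pi_a\mathcal T_au_n\|^2_{L^2(\Omega,p)}.
\]
Expanding the first term, $\|\mathcal T_au_n\|^2_{L^2(\Omega,p)}=1-2\int_\Omega p\,u_nV_a^{u_n}\,dx+\|V_a^{u_n}\|_{L^2(\Omega,p)}^2=1+O(\|V_a^{u_n}\|_{L^2(\Omega)})$, by Cauchy--Schwarz and $\|u_n\|_{L^2(\Omega,p)}=1$. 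The subtracted term is $O(\|V_a^{u_n}\|^2_{L^2(\Omega)})$ by \eqref{eq:energy_eige-L2} and Step 1. The $L^2(\Omega,p)$ norm and the $L^2(\Omega)$ norm in the statement differ by the weight $p$, so I read the normalization in the weighted norm, consistent with $\|u_n\|_{L^2(\Omega,p)}=1$.

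The only genuinely new point is the comparability in Step 1; everything else is bookkeeping on top of results already proved.
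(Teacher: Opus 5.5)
Your proposal is correct and follows the paper's proof essentially step by step: you specialize Theorem~\ref{theo_asymp_abstract} to $m=1$, bound $\tau_a$ by $\|V_a^{u_n}\|_{L^2(\Omega)}$ via \eqref{eq:est-tau-1}, and invoke Proposition~\ref{p:relation_E_r} and \eqref{eq_energy_eigenfunctions-a}. The differences are cosmetic: the lower bound on $\tau_a$ is not needed, and for \eqref{eq_energy_eigenfunctions3-a-simple} you use the Pythagorean identity for $\Pi_a$ where the paper expands $\|u_n-(u_n-\Pi_a\mathcal T_a u_n)\|^2$; your reading of that norm as the $p$-weighted one is also what the paper's own identity implicitly requires.
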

\begin{proof}
  In the case $m=1$, Theorem \ref{theo_asymp_abstract} yields
  \begin{equation}\label{eq:si1}
  \lambda_{n}^a=\lambda_n+\mu_1^a+O(\tau_a^2)\quad 
  \text{as }\mathop{\rm dist}(a,\partial\Omega)\to 0,
\end{equation}
with
\begin{equation}\label{eq:si2}
  \mu_1^a=r_a(u_n,u_n).
\end{equation}
Moreover, \eqref{eq:est-tau-1} implies that
$\tau_a=O\left(\|V_a^{u_n}\|_{L^2(\Omega)}\right)$ as
$\mathop{\rm dist}(a,\partial\Omega)\to 0$, so that
\eqref{eq:expa-Esimple} follows from \eqref{eq:si1}, \eqref{eq:si2},
and Proposition \ref{p:relation_E_r}.

From \eqref{eq_energy_eigenfunctions-a} and \eqref{eq:est-tau-1} it
follows that
\begin{equation*}
  \|u_n-V_a^{u_n}-\Pi_a(u_n-V_a^{u_n})\|_{H^1(\Omega\setminus S_a)}=
  \|(\mathcal T_a-\Pi_a\circ\mathcal T_a)u_n \|_{H^1(\Omega\setminus
    S_a)}=O(\tau_a)
  =O\left(\|V_a^{u_n}\|_{L^2(\Omega)}\right)
\end{equation*}
as $\mathop{\rm dist}(a,\partial\Omega)\to 0$, thus proving \eqref{eq_energy_eigenfunctions-a-simple}.

Finally, \eqref{eq_energy_eigenfunctions-a-simple}
implies that 
\begin{equation*}
  \|u_n
  -\Pi_a(u_n-V_a^{u_n})\|_{L^2(\Omega)}=
  O\left(\|V_a^{u_n}\|_{L^2(\Omega)}\right)
\end{equation*}
as $\mathop{\rm dist}(a,\partial\Omega)\to 0$. From this 
and the identity
\begin{equation*}
  \|\Pi_a(u_n-V_a^{u_n})\|_{L^2(\Omega)}^2=1+
  \|u_n-\Pi_a(u_n-V_a^{u_n})\|_{L^2(\Omega)}^2
  -2\int_{\Omega}  u_n (u_n-\Pi_a(u_n-V_a^{u_n}) \, dx
\end{equation*}
we directly obtain \eqref{eq_energy_eigenfunctions3-a-simple}.
\end{proof}

\section{Pole approaching a fixed boundary point}\label{sec_asymptoic_precise}
In the case of the pole approaching a fixed point of the boundary, we
provide a more precise asymptotic expansion of the eigenvalue
variation than the ones obtained in Theorem \ref{theo_asymp_abstract},
Theorem \ref{t:rough_estimate}, Corollary \ref{cor:simple}.

Throughout this section, we require the following additional
regularity assumption on the domain $\Omega$:
\begin{equation}\label{eq:c1a}
  \text{$\Omega\subset \R^2$ is a planar Jordan
    domain with $C^{1,\alpha}$ boundary},
\end{equation}
for some $\alpha\in(0,1)$. Under assumption \eqref{eq:c1a} we make a
conformal change of variables to locally straighten the boundary of
$\Omega$. Then we perform a gauge transformation as in Section
\ref{sec_preliminaries}. Finally, a rewriting of the problem in
elliptic coordinates allows us to describe the asymptotic behaviour of
the quantities $\{\mu_j^a\}$ appearing in \eqref{eq:expa1}.

  More precisely, we consider a sequence of poles
  $\{a_\ell\}_{\ell \in \mathbb{N}\setminus \{0\}}\subset\Omega$ such that
\begin{equation}\label{eq:sequence}
  a_\ell \to a_0 \quad\text{as $\ell\to\infty$,}\quad  
  \text{ for some } a_0 \in \partial \Omega.
\end{equation}

\subsection{An equivalent problem}\label{subsec_equiv_prob}
The following well-known result about the conformal equivalence of planar
domains follow by combining the classical Riemann mapping theorem
with the Kellogg--Warschawski theorem, see e.g. \cite[Theorem
3.6]{pommerenke}.

\begin{theorem}\label{theor_conf}
  If $\Omega_1$ and $\Omega_2$ are planar Jordan domains with
  $C^{1,\alpha}$ boundaries, for some $\alpha\in(0,1)$, then there
  exists a map $\Phi:\overline{\Omega}_1 \to \overline{\Omega}_2$ such
  that \begin{enumerate}[(i)]
  \item $\Phi\in C^{1,\alpha}(\overline{\Omega}_1)$ and $\Phi$ is
    holomorphic in $\Omega_1$,
\item $\Phi: \overline{\Omega}_1 \to \overline{\Omega}_2$ is a homeomorphism,
\item $\Phi^{-1}\in C^{1,\alpha}(\overline{\Omega}_2)$ and $\Phi^{-1}$
  is holomorphic in $\Omega_2$.
\end{enumerate}
\end{theorem}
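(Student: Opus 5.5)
The plan is to reduce to the unit disk $D=\{z\in\C:|z|<1\}$ and then compose. The ingredients are three classical results: the Riemann mapping theorem, Carath\'eodory's extension theorem, and the Kellogg--Warschawski theorem. For $i=1,2$, the Riemann mapping theorem gives a biholomorphic map $f_i:D\to\Omega_i$. Since $\Omega_i$ is a Jordan domain, Carath\'eodory's theorem extends $f_i$ to a homeomorphism $f_i:\overline D\to\overline{\Omega}_i$. We then set
\begin{equation*}
\Phi:=f_2\circ f_1^{-1}:\overline{\Omega}_1\to\overline{\Omega}_2 .
\end{equation*}
This map is a homeomorphism of the closures and is holomorphic in $\Omega_1$. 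Its inverse $f_1\circ f_2^{-1}$ is holomorphic in $\Omega_2$. So (ii) holds, and it remains to prove the $C^{1,\alpha}$ regularity up to the boundary claimed in (i) and (iii).

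For the regularity we invoke the Kellogg--Warschawski theorem (see \cite[Theorem 3.6]{pommerenke}). Since $\partial\Omega_i$ is a $C^{1,\alpha}$ Jordan curve, $f_i'$ extends continuously to $\overline D$ and belongs to $C^{0,\alpha}(\overline D)$. Moreover $f_i'\neq0$ on $\overline D$. Hence $f_i\in C^{1,\alpha}(\overline D)$ and, by compactness, $0<c\le|f_i'|\le C$ on $\overline D$.

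Next we show that $g_i:=f_i^{-1}$ lies in $C^{1,\alpha}(\overline{\Omega}_i)$. In $\Omega_i$ we have $g_i'=1/(f_i'\circ g_i)$, so $|g_i'|\le 1/c$. To turn this into a Lipschitz bound up to the boundary, we use that a $C^{1}$ Jordan domain is quasiconvex. That is, any two points of $\Omega_i$ can be joined inside $\Omega_i$ by a path of length comparable to their distance. Integrating $g_i'$ along such paths shows that $g_i$ is Lipschitz on $\overline{\Omega}_i$. Then $f_i'\circ g_i$ is the composition of an $\alpha$-H\"older function with a Lipschitz one, so it is $\alpha$-H\"older. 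Since it is bounded below by $c$, its reciprocal $g_i'$ is $\alpha$-H\"older and extends continuously to $\overline{\Omega}_i$. Thus $g_i\in C^{1,\alpha}(\overline{\Omega}_i)$.

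Finally, $\Phi'=(f_2'\circ g_1)\,g_1'$ is a product of bounded $\alpha$-H\"older functions on $\overline{\Omega}_1$, because $f_2'$ is H\"older and $g_1$ is Lipschitz. Hence $\Phi\in C^{1,\alpha}(\overline{\Omega}_1)$. The same argument with the roles exchanged gives $\Phi^{-1}=f_1\circ g_2\in C^{1,\alpha}(\overline{\Omega}_2)$.

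The main analytic content, and the step I expect to be the real obstacle, is the Kellogg--Warschawski theorem: H\"older continuity of $f_i'$ up to the boundary and, above all, its non-vanishing on $\partial D$. I would not reprove it. It is classically obtained by writing $\log f_i'$ in terms of the H\"older continuous tangent angle of $\partial\Omega_i$ and applying Privalov's theorem on the conjugate function, and we would simply cite it. The only remaining point needing care is the Lipschitz (quasiconvexity) argument for $g_i$ up to the boundary.
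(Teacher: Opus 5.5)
Your proposal is correct and follows the same route as the paper, which simply states that the result is obtained by combining the Riemann mapping theorem with the Kellogg--Warschawski theorem \cite[Theorem 3.6]{pommerenke}. You fill in details the paper leaves implicit: Carath\'eodory's extension for (ii), and the Lipschitz/H\"older argument that carries the $C^{1,\alpha}$ regularity from $f_i$ to $f_i^{-1}$, and then to $\Phi=f_2\circ f_1^{-1}$ and $\Phi^{-1}$ (the Lipschitz bound for $f_i^{-1}$ also follows directly from $f_i$ being bi-Lipschitz on the convex set $\overline{D}$, so quasiconvexity of $\Omega_i$ is not needed).
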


By Theorem \ref{theor_conf}, arguing as in
\cite[Lemma~3.1]{NMT-ab-bd}, we can prove the following result.
\begin{proposition}\label{prop_conf}
  Let $\Omega_1$ and $\Omega_2$ be planar Jordan domains with
  $C^{1,\alpha}$ boundaries, for some $\alpha\in(0,1)$.  Let $\Phi$ be
  the biholomorphic map from $\Omega_1$ to $\Omega_2$ given by Theorem
  \ref{theor_conf}.  Then, if $u$ is a solution of
  \eqref{eq:weaksense} in $\Omega=\Omega_1$ with $p$ as in
  \eqref{hp_p}, there exists
  $\chi \in C^{1,\alpha}(\overline{\Omega}_2\setminus\{\Phi(a)\})$
  such that
  \begin{equation}
  w:=e^{-i\chi} u\circ \Phi^{-1} \in
  H^{1,\Phi(a)}(\Omega_2,\C)\label{eq:corr-eig}
\end{equation}
  and
\begin{equation*}
  \int_{\Omega_2 }\left(i\nabla +A_{\Phi(a)} \right)w\cdot
  \overline{\left(i\nabla+ A_{\Phi(a)}\right)\varphi}\, dx
  =\la \int_{\Omega_2} \tilde p\,  w \overline{\varphi} dx \quad
  \text{for all }
  \varphi \in H^{1,\Phi(a)}(\Omega_2,\C),
\end{equation*}
where, letting $\Phi:=(\Phi_1,\Phi_2)$,  $\tilde p \in L^\infty(\Omega_2)$ is given by  
\begin{equation}\label{def_p}
  \tilde
  p(x):=\frac{p(\Phi^{-1}(x))}{\left|\pd{\Phi_1}{x_1}(\Phi^{-1}(x))\right|^2
    +\left|\pd{\Phi_1}{x_2}(\Phi^{-1}(x))\right|^2}
\end{equation}
and satisfies $\tilde p(x)\geq \tilde c>0$ a.e. in $\Omega_2$ for
  some positive constant $\tilde c$.
\end{proposition}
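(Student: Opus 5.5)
The plan is to pull the magnetic quadratic form on $\Omega_1$ back through the conformal map and to absorb the resulting discrepancy in the vector potential into a gauge factor. Write $\Phi$ as a holomorphic function $z\mapsto \Phi(z)$. For $y=\Phi(x)$, the Jacobian $D\Phi$ is the conformal matrix with $|\det D\Phi| = |\Phi'|^2 = (\partial_{x_1}\Phi_1)^2+(\partial_{x_2}\Phi_1)^2$ by the Cauchy--Riemann equations. Since $\Phi$ is conformal, the Dirichlet-type integral $\int |(i\nabla+B)\psi|^2$ is invariant under the change of variables: if $\psi = f\circ\Phi$ and $B = D\Phi^{T}(\tilde B\circ\Phi)$ (pullback of the $1$-form), then $\int_{\Omega_1}|(i\nabla+B)\psi|^2dx=\int_{\Omega_2}|(i\nabla+\tilde B)f|^2dy$. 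The zero-order term transforms instead as $\int_{\Omega_1}p\,\psi\bar\varphi\,dx=\int_{\Omega_2}p(\Phi^{-1}(y))|\Phi'(\Phi^{-1}(y))|^{-2}f\bar g\,dy$, and this is exactly $\tilde p$ in \eqref{def_p}. Theorem~\ref{theor_conf} shows that $\Phi'$ is continuous and nonvanishing on the compact set $\overline\Omega_1$, because $\Phi^{-1}\in C^{1,\alpha}$ and $(\Phi^{-1})'(\Phi(z))\Phi'(z)=1$. Hence $|\Phi'|$ is bounded above and below, which gives $\tilde p\in L^\infty$ and $\tilde p\ge \tilde c>0$.

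The key step is to compare the pulled-back potential $\tilde A:=(D\Phi^{-1})^{T}(A_a\circ\Phi^{-1})$ with $A_{\Phi(a)}$ on $\Omega_2\setminus\{\Phi(a)\}$. Both are curl-free there, so their difference is closed. Circulation is preserved under pullback, so $\tilde A$ has circulation $1/2$ around $b:=\Phi(a)$, and $A_b$ has the same circulation. The difference therefore has zero circulation around $b$. Since $\Omega_2$ is simply connected, $\Omega_2\setminus\{b\}$ has fundamental group generated by a loop around $b$, and the difference is exact: $\tilde A - A_b=\nabla\chi$. To get the regularity up to the boundary, I will write it in complex form. The $1$-form $A_a$ equals $\frac12 d\arg(z-a)=\frac12\,\mathrm{Im}\,\frac{dz}{z-a}$, so its pullback under $\Phi^{-1}$ is $\frac12\,\mathrm{Im}\,\frac{(\Phi^{-1})'(w)\,dw}{\Phi^{-1}(w)-a}$. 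Then
\[
\chi=\tfrac12\arg\frac{\Phi^{-1}(w)-a}{w-b},
\]
where $h(w)=(\Phi^{-1}(w)-a)/(w-b)$ is holomorphic and nonvanishing in $\Omega_2$, since the singularity at $b$ is removable with $h(b)=(\Phi^{-1})'(b)\ne0$. Simple connectivity gives a continuous branch of $\arg h$. The function $h$ is $C^{1,\alpha}$ on $\overline\Omega_2\setminus\{b\}$ and nonvanishing on $\overline\Omega_2$, because $\Phi^{-1}$ is a homeomorphism and $a$ is interior. This yields $\chi\in C^{1,\alpha}(\overline\Omega_2\setminus\{b\})$, and the proof of \cite[Lemma~3.1]{NMT-ab-bd} can be followed here.

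With $\chi$ in hand, set $w=e^{-i\chi}u\circ\Phi^{-1}$. Then $(i\nabla+A_b)w=e^{-i\chi}(i\nabla+\tilde A)(u\circ\Phi^{-1})$. Given a test function $\varphi\in H^{1,b}(\Omega_2,\C)$, I take $\psi=(e^{i\chi}\varphi)\circ\Phi$ as the test function in \eqref{eq:weaksense}. Changing variables, using the invariance above, and noting that the unimodular factors cancel, gives the stated weak identity with weight $\tilde p$.

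The remaining point is membership in the spaces: I need $w\in H^{1,b}(\Omega_2,\C)$, and I need the correspondence $\varphi\leftrightarrow\psi$ to map $H^{1,b}$ onto $H^{1,a}$. For this, $|\Phi(x)-b|\simeq|x-a|$ holds by bi-Lipschitz continuity, so the Hardy weight is preserved. The chain rule with $C^{1,\alpha}$ maps and the bound $|\nabla\chi|\le C/|w-b|$, which comes from $\nabla\chi=\tilde A-A_b$, then controls the gradients, using the Hardy term. I expect this functional-space bookkeeping, together with the existence of the global boundary-regular gauge $\chi$, to be the main technical obstacle.
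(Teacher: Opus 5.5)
Your proposal is correct and is essentially the argument the paper delegates to \cite[Lemma~3.1]{NMT-ab-bd}: conformal invariance of the magnetic Dirichlet form, the weight $\tilde p=(p\circ\Phi^{-1})\,|\Phi'\circ\Phi^{-1}|^{-2}$ coming from the Jacobian, and a gauge $\chi$ absorbing the exact difference between the pulled-back potential and $A_{\Phi(a)}$. As a minor remark, your $\chi=\tfrac12\arg h$ is harmonic in all of $\Omega_2$, including at $\Phi(a)$ where $h$ is holomorphic and nonvanishing, so in fact $\chi\in C^{1,\alpha}(\overline{\Omega}_2)$; hence $\nabla\chi$ is bounded and you do not need the Hardy term to control it.
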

We apply Theorem \ref{theor_conf} and Proposition \ref{prop_conf} by
taking $\Omega_1$ to be the domain $\Omega$ as introduced in
\eqref{eq:c1a}, and $\Omega_2$ to be any domain $\Omega_s$ that
satisfies, in addition to \eqref{eq:c1a}, the following conditions:
\begin{equation}\label{eq:Omega-s}
  \{0\}\times [-1,1] = \partial \Omega_s\cap \{x=(x_1,x_2)\in \R^2:
  x_1=0\} \quad \text{and}
  \quad \Omega_s \subset \{x=(x_1,x_2) \in \R^2: x_1>0 \}.
\end{equation}
Let $\Phi$  be the biholomorphic map from $\Omega$ to $\Omega_s$
provided by
  Theorem \ref{theor_conf}. We observe that, up to suitable rotations, it is possible to choose
  $\Phi$ so that 
\begin{equation}\label{eq:phi-of-a0}
  (0,0)=\Phi(a_0),
\end{equation}
where $a_0\in\partial\Omega$ is a fixed boundary point as in  \eqref{eq:sequence}.

\begin{proposition}\label{p:Phi1asd}
  Let $\Omega\subset\R^2$ be a bounded open set satisfying
  \eqref{eq:c1a} and $\Omega_s$ be as in \eqref{eq:Omega-s}.  Let
  $\{a_\ell\}_{\ell \in \mathbb{N}\setminus \{0\}}\subset\Omega$ and
  $a_0\in\partial\Omega$ satisfy \eqref{eq:sequence}. Let
  $\Phi=(\Phi_1,\Phi_2)$ be the biholomorphic map from $\Omega_1=\Omega$
  to $\Omega_2=\Omega_s$ provided by Theorem \ref{theor_conf}, chosen
  in such a way that \eqref{eq:phi-of-a0} is satisfied. Then
  \begin{equation*}
    \log \Phi_1(a_\ell)=\log \mathop{\rm dist}(a_\ell,\partial\Omega)+
    o\big(\log \mathop{\rm dist}(a_\ell,\partial\Omega)\big)\quad\text{as }\ell\to\infty.
  \end{equation*}
\end{proposition}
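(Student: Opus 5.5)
The plan is to show that $\Phi_1(a_\ell)$ is comparable, up to multiplicative constants, to $d_\ell:=\mathop{\rm dist}(a_\ell,\partial\Omega)$. In fact a much weaker comparison suffices: we only need
\[
c\, d_\ell^{1+\e}\le \Phi_1(a_\ell)\le C d_\ell^{1-\e}
\]
for every $\e>0$ and $\ell$ large. Taking logarithms, this gives $\log\Phi_1(a_\ell)=\log d_\ell+o(\log d_\ell)$, since $\log d_\ell\to-\infty$. By Theorem \ref{theor_conf}, both $\Phi$ and $\Phi^{-1}$ are $C^{1,\alpha}$ up to the boundary, hence globally Lipschitz on the closures. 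The natural route is therefore a two-sided Lipschitz comparison of distances to the boundary, and it should in fact yield $c\,d_\ell\le \Phi_1(a_\ell)\le C d_\ell$ for $\ell$ large.

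\textbf{Step 1 (localization).} Since $a_\ell\to a_0$ and $\Phi$ is a homeomorphism of closures, we have $\Phi(a_\ell)\to\Phi(a_0)=(0,0)$. The boundary $\partial\Omega_s$ contains the segment $\{0\}\times[-1,1]$, and $\Omega_s\subset\{x_1>0\}$. For $\ell$ large, $\Phi(a_\ell)$ lies in a small disk $D(0,\rho)$ with $\rho<1/2$. Because $\Omega_s$ is a Jordan domain whose boundary near the origin is the vertical segment, the distance from $\Phi(a_\ell)$ to $\partial\Omega_s$ equals $\Phi_1(a_\ell)$. Indeed, $\Phi_1(a_\ell)>0$ is the distance to the segment, and points of $\partial\Omega_s$ away from that segment lie at distance bounded below by a positive constant, for $\rho$ small.

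\textbf{Step 2 (upper bound).} Let $P_\ell\in\partial\Omega$ with $|a_\ell-P_\ell|=d_\ell$. Then $\Phi(P_\ell)\in\partial\Omega_s$, so by Lipschitz continuity of $\Phi$ on $\overline\Omega$,
\[
\Phi_1(a_\ell)=\mathop{\rm dist}(\Phi(a_\ell),\partial\Omega_s)\le |\Phi(a_\ell)-\Phi(P_\ell)|\le \|\nabla\Phi\|_\infty d_\ell.
\]
One subtlety is that Lipschitz continuity along the segment $S_{a_\ell}$ requires convexity. This is harmless here, because the segment $[a_\ell,P_\ell]$ lies in $\overline\Omega$ and $\Phi\in C^1(\overline\Omega)$, so the mean value inequality applies along it.

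\textbf{Step 3 (lower bound).} Take $Q_\ell=(0,\Phi_2(a_\ell))\in\partial\Omega_s$. The horizontal segment $[\Phi(a_\ell),Q_\ell]$ lies in $\overline{\Omega_s}$ for $\ell$ large, since near the origin $\Omega_s$ contains a half-disk to the right of the flat part. This containment is the point I expect to need some care: it should follow from the Jordan curve structure plus the flat segment, via a small rectangle argument. Given it, $\Phi^{-1}(Q_\ell)\in\partial\Omega$, and the mean value inequality for $\Phi^{-1}$ along that segment gives
\[
d_\ell\le|a_\ell-\Phi^{-1}(Q_\ell)|\le\|\nabla\Phi^{-1}\|_\infty\,\Phi_1(a_\ell).
\]

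Combining Steps 2 and 3 yields $|\log\Phi_1(a_\ell)-\log d_\ell|=O(1)=o(|\log d_\ell|)$, which concludes the argument. The main obstacle is the localization geometry, namely checking that the relevant segments stay inside the closures; the analytic content is only the $C^1$ regularity up to the boundary from Theorem \ref{theor_conf}.
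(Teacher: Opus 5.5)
Your proof is correct, but it takes a different route from the paper.

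\textbf{The paper's route.} It argues at first order using conformality. It writes $J_\Phi=gR$ and expands $\Phi(a_\ell)-\Phi(P_{a_\ell})$ and $\Phi(Q_{a_\ell})-\Phi(P_{a_\ell})$ around the nearest point $P_{a_\ell}$. Here $Q_{a_\ell}\in\partial\Omega$ is an auxiliary point with $|Q_{a_\ell}-P_{a_\ell}|=d_{a_\ell}$. Combining $(P_{a_\ell}-a_\ell)/d_{a_\ell}\to\nu(a_0)$ with the asymptotic tangency of the chord $Q_{a_\ell}-P_{a_\ell}$, it shows $|\Phi_2(a_\ell)-\Phi_2(P_{a_\ell})|=o(d_{a_\ell})$. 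In other words, the image of the normal segment is asymptotically horizontal. It then recovers $\Phi_1(a_\ell)$ from $|\Phi(a_\ell)-\Phi(P_{a_\ell})|\asymp d_{a_\ell}$ by Pythagoras.

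\textbf{Your route.} You make a symmetric distance comparison instead. The upper bound pushes the segment $[a_\ell,P_\ell]$ forward by $\Phi$, and the lower bound pulls the horizontal segment $[\Phi(a_\ell),Q_\ell]$ back by $\Phi^{-1}$. This needs neither conformality nor any tangency argument. It only uses that $\Phi$ and $\Phi^{-1}$ are $C^1$ up to the boundary and map boundary onto boundary, and that $\partial\Omega_s$ is flat near $0$. It yields the explicit bounds $\|\nabla\Phi^{-1}\|_\infty^{-1}d_\ell\le\Phi_1(a_\ell)\le\|\nabla\Phi\|_\infty d_\ell$, which is all the statement needs. The paper's expansion actually contains the sharper fact $\Phi_1(a_\ell)/d_{a_\ell}\to|\Phi'(a_0)|$, but this is never used later.

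\textbf{Step 3 containment.} The containment you flagged as the main obstacle follows at once from Step 1. Since $\mathop{\rm dist}(\Phi(a_\ell),\partial\Omega_s)=\Phi_1(a_\ell)$, the open disk of that radius centred at $\Phi(a_\ell)$ is connected, misses $\partial\Omega_s$, and meets $\Omega_s$. Hence it lies in $\Omega_s$ and contains the horizontal segment minus $Q_\ell$. This is the same reason that $S_a^\Omega\setminus\{P_a^\Omega\}\subset\Omega$, so no rectangle argument is needed.
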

\begin{proof}
  According to notation \eqref{eq:simply-notation}, we denote
  $P_{a_\ell}=P_{a_\ell}^\Omega$
  and $d_{a_\ell}=
  \mathop{\rm dist}(a_\ell,\partial\Omega)$. For every $\ell$ sufficiently large,
  let us fix some $Q_{a_\ell}\in\partial\Omega$ such that
  $|P_{a_\ell}-Q_{a_\ell}|=d_{a_\ell}$. Since
  \begin{equation*}
    \lim_{\ell\to\infty}\frac{P_{a_\ell}-a_\ell}{d_{a_\ell}}=\nu(a_0),
  \end{equation*}
  we have
  \begin{equation}\label{eq:tangort}
    \lim_{\ell\to\infty}\frac{Q_{a_\ell}-P_{a_\ell}}{|Q_{a_\ell}-P_{a_\ell}|}\cdot
    \frac{P_{a_\ell}-a_\ell}{d_{a_\ell}}=0.
  \end{equation}
  Moreover, since $\Phi$ is a conformal map, $J_{\Phi}(x)=g(x)R_x$,
  where $g(x)=\det(J_{\Phi}(x))\in C^0(\overline{\Omega},\R)$,
  $g(x)\neq0$ for all $x\in \overline{\Omega}$, and $R_x$ is a
  $2\times2$ orthogonal matrix.  Hence
\begin{align}
\label{eq:expa1n}  &\Phi(Q_{a_\ell})-\Phi(P_{a_\ell})=J_\Phi(P_{a_\ell})(Q_{a_\ell}-P_{a_\ell})+o(|Q_{a_\ell}-P_{a_\ell}|)=
    g(P_{a_\ell})R_{P_{a_\ell}}(Q_{a_\ell}-P_{a_\ell})+o(d_{a_\ell}),\\
 \label{eq:expa2n} &\Phi(a_\ell)-\Phi(P_{a_\ell})=J_\Phi(P_{a_\ell})(a_\ell-P_{a_\ell})+o(|a_\ell-P_{a_\ell}|)=
    g(P_{a_\ell})R_{P_{a_\ell}}(a_\ell-P_{a_\ell})+o(d_{a_\ell}),
\end{align}
as $\ell\to\infty$.  
This implies that there exist constants $0<D_1<D_2$ such that
\begin{equation}\label{eq:uplo}
  D_1d_{a_\ell}\leq |\Phi(Q_{a_\ell})-\Phi(P_{a_\ell})| \leq
  D_2d_{a_\ell}\quad\text{and}\quad
  D_1d_{a_\ell}\leq |\Phi(a_\ell)-\Phi(P_{a_\ell})| \leq
  D_2d_{a_\ell}.
\end{equation}
Furthermore, from \eqref{eq:tangort}, \eqref{eq:expa1n},
\eqref{eq:expa2n}, and \eqref{eq:uplo} it follows that
  \begin{align}
\notag    |\Phi_2(a_\ell)-\Phi_2(P_{a_\ell})|&=\left|
(\Phi(a_\ell)-\Phi(P_{a_\ell}))\cdot \frac{\Phi(Q_{a_\ell})-\Phi(P_{a_\ell})}{|\Phi(Q_{a_\ell})-\Phi(P_{a_\ell})|}
                                   \right|\\
  \label{eq:stimaPhi2}  &=
    \frac{
(g(P_{a_\ell}))^2 (Q_{a_\ell}-P_{a_\ell})\cdot (a_\ell-P_{a_\ell})+o(d_{a_\ell}^2)
    }{|\Phi(Q_{a_\ell})-\Phi(P_{a_\ell})|}=o(d_{a_\ell})
  \end{align}
  as $\ell\to\infty$. In view of \eqref{eq:stimaPhi2} and \eqref{eq:uplo}
  we finally obtain
  \begin{equation*}
    D_2^2d_{a_\ell}^2+o(d_{a_\ell}^2)\geq |\Phi_1(a_\ell)|^2=
    |\Phi(a_\ell)-\Phi(P_{a_\ell})|^2- |\Phi_2(a_\ell)-\Phi_2(P_{a_\ell})|^2
    \geq     D_1^2d_{a_\ell}^2+o(d_{a_\ell}^2),
  \end{equation*}
  as $\ell\to\infty$. This directly implies
  \begin{equation*}
\log d_{a_\ell}+O(1)\geq    \log |\Phi_1(a_\ell)|\geq \log d_{a_\ell}+O(1)
  \end{equation*}
  as $\ell\to\infty$, thus completing the proof.  
\end{proof}

In view of Proposition \ref{prop_conf}, under assumption
  \eqref{eq:c1a}, for every $a\in\Omega$  the eigenvalue problem
  \eqref{eq:eige_equation_a}  is equivalent to the eigenvalue problem
  $\big(E_{\Omega_s,\tilde p}^{\Phi(a)}\big)$, in the sense that
  \begin{equation}\label{eq:equiv-sp-a}
    \lambda_k^a(\Omega,p)= \lambda_k^{\Phi(a)} (\Omega_s,\tilde
    p)\quad\text{for all }k\in\N\setminus\{0\},
  \end{equation}
  with eigenfunctions that are matched through the transformation
  \eqref{eq:corr-eig} and weight $\tilde p$ given by \eqref{def_p}.
  Similarly, by the conformality of the map $\Phi$,
  \eqref{eq:eige_lapla} is equivalent to $(E_{\Omega_s,\tilde p})$, so
  that
\begin{equation}\label{eq:equiv-sp}
  \lambda_k(\Omega,p)= \lambda_k (\Omega_s,\tilde
  p)\quad\text{for all }k\in\N\setminus\{0\},
  \end{equation}
  with eigenfunctions matched through the transformation $\Phi$.

  Let $n,m \in \N\setminus\{0\}$ be such that
  \eqref{lambda-n_multiple-m-bis} holds, i.e. $\lambda_n(\Omega,p)$
  has multiplicity $m$ as an eigenvalue of \eqref{eq:eige_lapla}.
Let $E(\lambda_n(\Omega,p))$ be the associated $m$-dimensional
eigenspace and let $\{u_{n}, u_{n+1},\dots, u_{n+m-1}\}$ be as in
\eqref{eq:basis}, i.e. $\{u_{n}, u_{n+1},\dots, u_{n+m-1}\}$ is a
basis of $E(\lambda_n(\Omega,p))$ orthonormal with respect to the
scalar product \eqref{eq:scal-p}.  Hence
$\lambda_n(\Omega,p)=\lambda_n(\Omega_s,\tilde p)$ has multiplicity
$m$ also as an eigenvalue of $(E_{\Omega_s,\tilde p})$ and the
functions 
\begin{equation}\label{eq:w-u}
  w_{n+i-1}=u_{n+i-1}\circ\Phi^{-1}\in H^1(\Omega_s),\quad i=1,\dots,m,
\end{equation}
form a basis of the associated eigenspace
$E(\lambda_n(\Omega_s,\tilde p))$ orthonormal in
$L^2(\Omega_s,\tilde p)$, i.e.
\begin{equation*}
  \int_{\Omega_s}\tilde p\,  w_{n+i-1} w_{n+j-1}\,dx=
  \begin{cases}
    1,&\text{if }i=j,\\
  0,&\text{if }i\neq j.
  \end{cases}
\end{equation*}
Let
$\{a_\ell\}_{\ell\in\N\setminus\{0\}}$ and $a_0\in\partial\Omega$ be as in
\eqref{eq:sequence}.  From \eqref{eq:phi-of-a0}, we have
\begin{equation*}
  \Phi(a_\ell)\to (0,0)\quad\text{as }\ell\to\infty.
\end{equation*}

\begin{remark}\label{rem:cf}
  Since the boundary of $\Omega_s$ is flat near $(0,0)$ and each
  function $w_{n+i-1}$ in \eqref{eq:w-u} satisfies a Neumann condition
  on $\partial \Omega_s$, it is possible to perform an even reflection
  of $w_{n+i-1}$ with respect to the variable $x_1$, thus obtaining a
  solution of an equation of type $(E_{\Omega_s,\tilde p})$ in a whole
  neighborhood of the point $(0,0)$. By applying well--known results
  on the asymptotic behavior near the zeros of solutions to elliptic
  PDEs (see, for example, \cite{cf}), we deduce that, if an
  eigenfunction $w_{n+i-1}$ vanishes at $(0,0)$, then there cannot be
  other zeros on $\partial \Omega_s$ in a neighborhood of
  $(0,0)$. Therefore, the zeros of the eigenfunction
  $u_{n+i-1}$ on $\partial\Omega$ are
  isolated, and hence they are finite in number.
\end{remark}

From \eqref{eq:equiv-sp-a}, \eqref{eq:equiv-sp}, Theorem
\ref{theo_asymp_abstract}, and \eqref{eq:est-tau-1}, it follows that,
for every $1\leq j\leq m$,
\begin{align}
 \notag \lambda_{n+j-1}^{a_\ell}(\Omega,p) -\lambda_n (\Omega,p)&=
\lambda_{n+j-1}^{\Phi(a_\ell)}(\Omega_s,\tilde p) -\lambda_n
                                                                  (\Omega_s,\tilde p)\\
  \label{eq:first-est-aj}&=
                           \mu_j^{\Phi(a_\ell)}(\Omega_s,\tilde p)+O\bigg(
                           \sum_{j=1}^m\left\|V^{w_{n+j-1}}_{\Phi(a_\ell)}\right\|_{L^2(\Omega_s)}^2\bigg)
                           \quad \text{as  }\ell\to\infty,
\end{align}
with $\{\mu_j^{\Phi(a_\ell)}(\Omega_s,\tilde p)\}_{j=1,\dots,m}$ being
the eigenvalues of the $m\times m$ matrix
\begin{equation*}
  \left(r_{\Omega_s,\tilde p,n}^{\Phi(a_\ell)} ( w_{n+i-1}, w_{n+j-1}) \right)_{ij}.
\end{equation*}
With the aim of identifying the asymptotics of
$r_{\Omega_s,\tilde p,n}^{\Phi(a_\ell)} ( w_{n+i-1}, w_{n+j-1})$ as $\ell\to\infty$, we
first study the behavior of
$\mathcal E^{\Phi(a_\ell),w}_{\Omega_s,\tilde p}$ 
for any $w\in E(\lambda_n(\Omega_s,\tilde p))$.
To this end, we combine an explicit computation in elliptic coordinates 
with a comparison argument with respect to domain inclusions. More
precisely, we are going to prove the following result.
\begin{proposition}\label{p:asyE}
  If $w\in E(\lambda_n(\Omega_s,\tilde p))$ and
  $\int_{\Omega_s}\tilde pw^2\,dx=1$, then
  \begin{equation}\label{eq:asyE1}
    \mathcal E^{\Phi(a_\ell),w}_{\Omega_s,\tilde p}=
    \frac{\pi(w(0))^2}{2|\log \Phi_1(a_\ell)|}+
    o\left(\frac{1}{|\log \Phi_1(a_\ell)|}\right)
\end{equation}
and
\begin{equation}\label{eq:asyE2}
  \big\|V^{\Phi(a_\ell),w}_{\Omega_s,\tilde
    p}\big\|_{L^2(\Omega_s)}^2=
  o\left(\frac{1}{|\log \Phi_1(a_\ell)|}\right)
\end{equation}
as $\ell\to\infty$.
\end{proposition}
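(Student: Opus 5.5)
Write $b_\ell=\Phi(a_\ell)$ and $\varepsilon_\ell=\Phi_1(b_\ell)$ for its distance from $\{x_1=0\}$. For $\ell$ large, $b_\ell$ lies near the origin and the segment $S_{b_\ell}$ is horizontal, running from $b_\ell$ to $(0,\Phi_2(b_\ell))$; by Proposition \ref{p:Phi1asd}, $|\log\varepsilon_\ell|\sim|\log d_{a_\ell}|$. The plan is to establish matching upper and lower bounds for $\mathcal E:=\mathcal E^{b_\ell,w}_{\Omega_s,\tilde p}$ of the form $\frac{\pi w(0)^2}{2|\log\varepsilon_\ell|}(1+o(1))$.

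\textbf{Upper bound.} I would use a test function modelled on the explicit Neumann capacitary potential in elliptic coordinates. Reflect evenly across $\{x_1=0\}$; the crack then becomes the segment from $b_\ell$ to its mirror image, of length $2\varepsilon_\ell$, centred at $(0,\Phi_2(b_\ell))$. With elliptic coordinates $(\xi,\eta)$ based on this segment, take $\psi_\ell$ to be the harmonic function that equals $1$ on the segment and decays logarithmically in $\xi$, cut off at $|x|\sim\sqrt{\varepsilon_\ell}$ in the spirit of $\eta_r$ in \eqref{eq:def-cutoff}. The test function is $v=w\,\psi_\ell\cdot(\text{sign jump})$, in the same way as $u\eta_{d_a}$ in the proof of Proposition \ref{prop_Ee_limit}, but with the sign arranged so that $v-w\in\mathcal H_{b_\ell}$. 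Concretely, $v=w\psi_\ell$ on one side of the crack, with the sign flipped across it. Half of the whole-plane Dirichlet energy of the logarithmic profile gives $\frac12\cdot\frac{2\pi}{|\log\varepsilon_\ell|}w(0)^2$ in the half-plane, and the factor $\frac12$ in $J$ then gives $\frac{\pi w(0)^2}{2|\log\varepsilon_\ell|}$. The cross terms with $\nabla w$, the $\tilde p$ term, and the $\lambda_n$ term are all $O(\sqrt{\varepsilon_\ell})$ or $o(1/|\log\varepsilon_\ell|)$ by continuity of $w$ and boundedness of $\nabla w$. In practice it is cleaner simply to take $v=w(0)\eta$ with the half-disk cut-off $\eta$ of radii $\varepsilon_\ell$ and $\sqrt{\varepsilon_\ell}$, adjusted by a jump, and to check that the energy is exactly $\frac{2\pi}{|\log\varepsilon_\ell|}(1+o(1))$ in the half-plane.

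\textbf{Lower bound.} This is the main obstacle. First I would freeze coefficients: replace $w$ by $w(0)$ on a small half-disk $B_\rho^+$, at a cost that is controlled through $\|L\|_{*}$ (Proposition \ref{prop_La}) and \eqref{eq:Vato0}. Then I would use the monotonicity of Remark \ref{rem:comparison-weight} to replace $\tilde p$ by a constant $c$, and restrict the integrals to $B_\rho^+$. This reduces the problem to a minimization for the jump datum $w(0)$ in a half-disk, with an additional zero-order term $cv^2$. This zero-order term is exactly the difficulty the paper mentions: it produces a singular potential in elliptic coordinates. I would handle it through the intermediate auxiliary problem alluded to in the introduction. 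That is, I would solve the problem without the $pv^2$ term but with a Dirichlet condition on the outer arc $|x|=\rho$, computing the minimizer explicitly in elliptic coordinates; its energy is $\frac{\pi w(0)^2}{2\log(\rho/\varepsilon_\ell)}(1+o(1))$. I would then show that the zero-order term changes the energy only by $o(1/|\log\varepsilon_\ell|)$, since the explicit minimizer has $L^2$ norm of order $1/|\log\varepsilon_\ell|$, which is smaller. Duality, i.e. writing $\mathcal E=-\frac12 q(V,V)$ as a maximum over dual fields, would convert the explicit solution into a lower bound.

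\textbf{Estimate \eqref{eq:asyE2}.} From the identity $q(V,V)=-2\mathcal E+\ldots$ and the equality of the upper and lower bounds, $V$ is asymptotically the explicit profile. Its $L^2$ norm is concentrated on a ball of radius $\sqrt{\varepsilon_\ell}$ with height $O(1)$, plus a logarithmic tail of size $O(1/|\log\varepsilon_\ell|)$ in $L^2$. This gives $\|V\|_{L^2}^2=o(1/|\log\varepsilon_\ell|)$. Alternatively, a Poincaré-type compactness argument shows that $V\,|\log\varepsilon_\ell|^{1/2}\to0$ in $L^2$, because its normalized weak limit lies in $H^1$ with zero jump datum.
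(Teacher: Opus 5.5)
\textbf{The lower bound, which you rightly call the crux, does not work as described: the comparison with the Dirichlet problem goes the wrong way.} After freezing and replacing $\tilde p$ by a constant $c$, restricting the true minimizer $V$ to $B_\rho^+$ only gives $\mathcal E\ge\frac12\min\{\int_{B_\rho^+\setminus S}(|\nabla v|^2+cv^2)\,dx:\gamma^+v+\gamma^-v=2w(0)\text{ on }S\}+o(|\log\varepsilon_\ell|^{-1})$. This problem has a \emph{free} condition on the arc $|x|=\rho$, because $V$ does not vanish there. Its value is dominated by the Dirichlet one, not the other way round, and without $cv^2$ it is $0$ (take $v\equiv w(0)$). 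The smallness of the $L^2$ norm of the explicit Dirichlet minimizer only shows that adding $cv^2$ to the Dirichlet problem costs little, which again bounds that problem from above.

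What is missing is information on the \emph{actual} minimizer. You need $\|V\|_{L^2}^2=o(|\log\varepsilon_\ell|^{-1})$, which is \eqref{eq:asyE2} itself. It is exactly the compactness statement you mention only at the end (Proposition \ref{p:negli} combined with \eqref{eq:Vato0}). If you prove it first, it closes the gap. Take a cut-off $\chi$ equal to $1$ on $B_{\rho/2}$ and supported in $B_\rho$. Then $\|\nabla(\chi V)\|_{L^2}^2\le(1+\delta)\|\nabla V\|_{L^2}^2+C_\delta\rho^{-2}\|V\|_{L^2}^2$, and $\chi$ times the frozen minimizer is admissible for the Dirichlet problem.

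The duality remark does not substitute for this. The identity $\mathcal E=-\frac12 q(V,V)$ is false: for constant datum $L\equiv0$ and $\mathcal E=\frac12q(V,V)$, and in general $\mathcal E=q(V,w)-\frac12q(V,V)$. Moreover, an admissible dual field must have zero normal flux on the outer Neumann boundary. So the flux of the Dirichlet potential through the arc must be absorbed by a divergence term paired with $cv^2$, which is precisely the estimate you have not carried out.

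The paper avoids the issue by never leaving the Neumann setting. After reducing to constant datum (Proposition \ref{p:confronto-costante}), it sandwiches $\tilde p$ between constants and $\Omega_s$ between translated half-ellipses; both are exact monotonicities. It then evaluates the half-ellipse problem by comparison with the explicitly solvable weight $\rho_\varepsilon$ (Proposition \ref{prop_WZ}), where the needed smallness of the true minimizer is Lemma \ref{lemma_Zjo}.

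\textbf{Two further steps are wrong as stated.} First, the upper bound. With radii $\varepsilon_\ell$ and $\sqrt{\varepsilon_\ell}$, the half-plane Dirichlet energy of the cut-off is $\frac{2\pi}{|\log\varepsilon_\ell|}$ (half of \eqref{eq:int-cut}), as your own last sentence says. Hence $J(v)\approx\frac{\pi w(0)^2}{|\log\varepsilon_\ell|}$, twice the target, and your two computations contradict each other. Take the outer radius $\rho$ fixed instead. Then the half-plane energy is $\pi/\log(\rho/\varepsilon_\ell)$. Since $\eta$ is $O(|\log\varepsilon_\ell|^{-1})$ on most of $B_\rho$, the terms $\int\eta^2(|\nabla w|^2+\tilde pw^2)\,dx$ and the cross term are $O(|\log\varepsilon_\ell|^{-2})$.

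No sign flip is needed, and none is allowed. The condition $v-w\in\mathcal H_{b_\ell}$ means $\gamma^+v+\gamma^-v=2w$ on $S_{b_\ell}$, which the continuous function $w\eta$ satisfies, whereas a flip gives $\gamma^+v+\gamma^-v=0$. Also $L(w\eta)=0$ because $w\eta\in H^1(\Omega_s)$, so $J(w\eta)=\frac12q(w\eta,w\eta)$.

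Second, the freezing cost cannot be controlled by Proposition \ref{prop_La} and \eqref{eq:Vato0}: their product is $O(|\log\varepsilon_\ell|^{-1})$, the same order as the main term. Use instead \eqref{eq:cara_La2}, which gives $L(V)=O(\sqrt{\varepsilon_\ell}\,\|V\|_{H^1})$. Then subtract $(w-w(0))\eta_{\varepsilon_\ell}(\cdot-P_{b_\ell})$, whose $H^1$ norm is $o(|\log\varepsilon_\ell|^{-1/2})$.
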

The following expansion for
$r_{\Omega_s,\tilde p,n}^{\Phi(a_\ell)} ( w_{n+i-1}, w_{n+j-1})$ will
be deduced as a consequence of Proposition~\ref{p:asyE}.
\begin{proposition}\label{p:asy-r}
  Letting $\{w_{n+i-1}\}_{i=1}^m$ be as in \eqref{eq:w-u}, we have
  \begin{equation*}
    r_{\Omega_s,\tilde p,n}^{\Phi(a_\ell)} ( w_{n+i-1}, w_{n+j-1})=
    \frac{\pi }{|\log \Phi_1(a_\ell)|}\, w_{n+i-1}(0) w_{n+j-1}(0)
    +o\left(\frac{1}{|\log \Phi_1(a_\ell)|}\right)
\end{equation*}
as $\ell\to\infty$, for every $i,j\in\{1,\dots,m\}$.
\end{proposition}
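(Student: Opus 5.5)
We derive Proposition~\ref{p:asy-r} from Proposition~\ref{p:asyE} by polarization, using the bilinearity and symmetry of $r^{\Phi(a_\ell)}_{\Omega_s,\tilde p,n}$ established in Remark~\ref{r_r-a-symm}. Throughout we write $r_\ell=r^{\Phi(a_\ell)}_{\Omega_s,\tilde p,n}$, and we abbreviate $\mathcal E^{w}_\ell=\mathcal E^{\Phi(a_\ell),w}_{\Omega_s,\tilde p}$ and $V^w_\ell=V^{\Phi(a_\ell),w}_{\Omega_s,\tilde p}$.

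\emph{Step 1: diagonal terms.} Let $w\in E(\lambda_n(\Omega_s,\tilde p))$ with $\int_{\Omega_s}\tilde p w^2\,dx=1$. Proposition~\ref{p:relation_E_r} applies in $\Omega_s$ with weight $\tilde p$, since $\Omega_s$ is Lipschitz and $\tilde p$ satisfies \eqref{hp_p}. It gives
\begin{equation*}
r_\ell(w,w)=2\mathcal E^{w}_\ell+O\big(\|V^{w}_\ell\|^2_{L^2(\Omega_s)}\big).
\end{equation*}
Inserting \eqref{eq:asyE1} and \eqref{eq:asyE2}, we obtain
\begin{equation*}
r_\ell(w,w)=\frac{\pi\,(w(0))^2}{|\log\Phi_1(a_\ell)|}+o\Big(\frac{1}{|\log\Phi_1(a_\ell)|}\Big).
\end{equation*}
If $w\in E(\lambda_n(\Omega_s,\tilde p))$ is not normalized, we apply this to $w/\|w\|_{L^2(\Omega_s,\tilde p)}$. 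Since $r_\ell$ is quadratic and $w(0)^2$ scales in the same way, the same formula holds for every nonzero $w$.

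\emph{Step 2: polarization.} For $i\neq j$, set $w=w_{n+i-1}+w_{n+j-1}$ and $w'=w_{n+i-1}-w_{n+j-1}$. Both lie in the eigenspace and both have $L^2(\Omega_s,\tilde p)$-norm squared equal to $2$ by orthonormality. By symmetry and bilinearity of $r_\ell$,
\begin{equation*}
r_\ell(w_{n+i-1},w_{n+j-1})=\tfrac14\big(r_\ell(w,w)-r_\ell(w',w')\big).
\end{equation*}
Applying Step 1 to $w$ and $w'$, the leading terms combine through
\begin{equation*}
\tfrac14\big[(w_{n+i-1}(0)+w_{n+j-1}(0))^2-(w_{n+i-1}(0)-w_{n+j-1}(0))^2\big]=w_{n+i-1}(0)\,w_{n+j-1}(0).
\end{equation*}
The errors remain $o(1/|\log\Phi_1(a_\ell)|)$ because only finitely many fixed functions are involved.

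\emph{Main obstacle.} The only delicate point is that the remainders in Proposition~\ref{p:asyE} must be uniform enough to handle the rescaled combinations $w/\sqrt2$ and $w'/\sqrt2$. This is not an issue here, because each combination is a fixed function independent of $\ell$, so Proposition~\ref{p:asyE} applies to it individually. All the real analytical difficulty is therefore contained in Proposition~\ref{p:asyE}, which we take as given.
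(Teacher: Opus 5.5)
Your proof is correct and is essentially the paper's argument. The paper also polarizes the symmetric bilinear form $r^{\Phi(a_\ell)}_{\Omega_s,\tilde p,n}$, reduces each quadratic term via Proposition~\ref{p:relation_E_r}, and concludes with \eqref{eq:asyE1}--\eqref{eq:asyE2}. The only difference is cosmetic: the paper polarizes with the normalized combinations $(w_{n+i-1}\pm w_{n+j-1})/\sqrt2$, while you use $w_{n+i-1}\pm w_{n+j-1}$ together with quadratic homogeneity. You also treat $i=j$ directly in Step 1, which avoids the non-normalized function $\sqrt2\,w_{n+i-1}$ that the paper's single formula produces in that case.
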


The rest of this section is devoted to the proof of Propositions
\ref{p:asyE} and \ref{p:asy-r}, and to the derivation of Theorem
\ref{t:toward-a-fixed-point} from them.

\subsection{\texorpdfstring{$L^2$}{L2}-norm versus energy of potentials}

As a first step, we note that if the boundary of the domain contains a
segment as in \eqref{eq:Omega-s}, and a sequence of poles $\{b_\ell\}$
is approaching a fixed point on the straight part of the boundary,
then the $L^2$--norm of the potentials attaining
$\mathcal E^{b_\ell,w}_{\Omega_s,\zeta}$ is negligible compared to their
energy.
\begin{proposition}\label{p:negli}
  Let $\Omega_s$ satisfy \eqref{eq:c1a} and \eqref{eq:Omega-s}. Let
  $w\in \mathcal F(\Omega_s)\cap C^1(\overline{\Omega_s})$ with
  $\partial_\nu w=0$ on $\partial\Omega_s$,
  $\zeta\in L^\infty(\Omega_s)$ satisfy
  \eqref{hp_p}, and $\{b_\ell\}\subset\Omega_s$ be such that $b_\ell\to 0$
  as $\ell\to\infty$. Then
  \begin{equation*}
    \big\|V^{b_\ell,w}_{\Omega_s, \zeta}\big\|_{L^2(\Omega_s)}=o
    \left(\big\|\nabla
      V^{b_\ell,w}_{\Omega_s, \zeta}\big\|_{L^2(\Omega_s\setminus
      S_{b_\ell}^{\Omega_s})}\right)\quad\text{as }\ell\to\infty.
  \end{equation*}
\end{proposition}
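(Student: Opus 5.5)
Set $V_\ell:=V^{b_\ell,w}_{\Omega_s,\zeta}$ and $S_\ell:=S^{\Omega_s}_{b_\ell}$. The argument is by contradiction combined with a normalization. Suppose the claim fails. Then, along a subsequence,
\[
\|V_\ell\|_{L^2(\Omega_s)}\geq c_0\,\|\nabla V_\ell\|_{L^2(\Omega_s\setminus S_\ell)}
\]
for some $c_0>0$. Note that $V_\ell\not\equiv 0$ eventually; otherwise $w\in\mathcal H_{b_\ell}(\Omega_s)$, which forces $w=0$ on $S_\ell$ and makes the claim trivial. Define the normalized functions $Z_\ell:=V_\ell/\|V_\ell\|_{L^2(\Omega_s)}$. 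These satisfy $\|Z_\ell\|_{L^2}=1$ and are bounded in $H^1(\Omega_s\setminus K)$, where $K=\bigcup_\ell S_\ell$.

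Since $b_\ell\to0$ lies on the flat part of $\partial\Omega_s$, for large $\ell$ the segment $S_\ell$ is horizontal, joins $b_\ell$ to $(0,(b_\ell)_2)$, and has length $\Phi_1$-type size $(b_\ell)_1\to0$. By Proposition \ref{prop_mosco}, up to a subsequence $Z_\ell\rightharpoonup Z$ weakly in $H^1(\Omega_s\setminus K)$ with $Z\in H^1(\Omega_s)$. However, $Z_\ell\notin\mathcal H_{b_\ell}$, since $V_\ell-w\in\mathcal H_{b_\ell}$; so the strong $L^2$ convergence \eqref{eq:comp} does not apply directly. To recover it, split $Z_\ell=(V_\ell-w\,\eta_\ell)/\|V_\ell\|+w\eta_\ell/\|V_\ell\|$, where $\eta_\ell$ is a cutoff equal to $1$ near $S_\ell$ such as $\eta_{(b_\ell)_1}(\cdot-P_{b_\ell})$. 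The first piece lies in $\mathcal H_{b_\ell}$ and is handled by the diamagnetic argument of Proposition \ref{prop_mosco}, applied exactly as there. For the second piece we need $\|\eta_\ell\|_{L^2}/\|V_\ell\|_{L^2}\to0$, which is delicate. A more robust alternative is to rerun the proof of Proposition \ref{prop_mosco} directly, using a reflection trick. Since the boundary is flat, even reflection across $x_1=0$ turns $S_\ell$ into a segment of length $2(b_\ell)_1$ that does not touch the boundary, and $V_\ell$ satisfies a sign-jump plus a shift by $w$ across it. A compactness argument on $\Omega_s\setminus D(0,\e)$ plus uniform smallness of $\int_{D(0,\e)}Z_\ell^2$ (via a Poincaré/Sobolev bound in the reflected disc, in which a function in $H^1$ of a disc minus a short segment has $L^q$ norm controlled uniformly) gives strong $L^2$ convergence. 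Hence $\|Z\|_{L^2}=1$.

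Next, identify the limit equation. Dividing \eqref{eq:eq_Va} by $\|V_\ell\|_{L^2}$ and testing with $\varphi\in H^1_{0,0}(\Omega_s)$ gives $L^{b_\ell,w}(\varphi)=0$ for large $\ell$, by \eqref{eq:cara_La2}, and $\varphi\in\mathcal H_{b_\ell}$. Therefore $q(Z,\varphi)=0$ for all such $\varphi$, and by density (Proposition \ref{prop_density_0}) $Z\equiv0$. This contradicts $\|Z\|_{L^2}=1$ and proves the proposition.

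The main obstacle is the strong $L^2$ compactness of $Z_\ell$ near the origin, i.e.\ excluding concentration of $L^2$ mass at $0$. I would address it as follows. Reflect evenly in $x_1$ so the crack becomes interior, of length $\sim(b_\ell)_1$. On a fixed disc $D(0,\e)$, write $Z_\ell=\tilde Z_\ell+c_\ell$, where $c_\ell$ is the mean on the annulus $D(0,\e)\setminus D(0,\e/2)$, which is crack-free for large $\ell$. Then use a Poincaré inequality on $D(0,\e)\setminus S_\ell$. Its constant is uniform, because the slit domain is a union of two Lipschitz halves (upper and lower half-discs cut along the line of $S_\ell$) with a uniform constant, glued along the long arc part. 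This gives
\[
\int_{D(0,\e)}Z_\ell^2\leq C\e^2\|\nabla Z_\ell\|^2+C\int_{\text{annulus}}Z_\ell^2 .
\]
Letting $\e\to0$ after $\ell\to\infty$ yields tightness, and hence strong convergence.
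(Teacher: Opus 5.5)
Your proof is correct, but the key compactness step (ruling out concentration of $L^2$-mass at the origin) goes by a different route from the paper.

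\textbf{The paper's route.} It avoids any Poincar\'e estimate on slit domains. It translates each $V^{b_{\ell_k},w}_{\Omega_s,\zeta}$ vertically by $-b_{\ell_k,2}$, so that all cracks lie inside one fixed segment $s=[0,r_0]\times\{0\}$. It then extends to a fixed rectangle $R$ by uniformly bounded extension operators and uses the standard compact embedding $H^1(R\setminus s)\hookrightarrow L^2(R)$ for a single fixed slit domain. The price is bookkeeping: the moving domains $\omega_k$ are handled by small symmetric difference together with H\"older and Sobolev. The translated weight $\zeta(\cdot,\cdot+b_{\ell_k,2})$ is handled by weak-$*$ convergence when passing to the limit in the equation.

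\textbf{Your route.} You keep $\Omega_s$ fixed. You get strong convergence on $\Omega_s\setminus D(0,\varepsilon)$, which is crack-free for large $\ell$. Near the origin you use a uniform, scale-invariant Poincar\'e inequality on a disc minus a short segment: cut along the line of the segment into two uniformly Lipschitz pieces and compare their means through the crack-free annulus. This gives $\int_{D(0,\varepsilon)}Z_\ell^2\le C\varepsilon^2\|\nabla Z_\ell\|^2+C\int_{\text{annulus}}Z_\ell^2$, which is exactly the tightness needed. The limit equation then follows from weak convergence alone, and density of functions vanishing near $0$ (Proposition \ref{prop_density_0}) gives $Z\equiv0$.

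\textbf{Comparison.} Your argument is more direct, and it only uses that the cracks shrink to a point, not that they become collinear after translation. The paper's argument trades your slit-domain Poincar\'e estimate for a textbook compact embedding on one fixed domain.

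\textbf{Minor remarks.}
\begin{itemize}
\item The even reflection is unnecessary; the same cut-and-glue argument works directly on the half-disc.
\item The cutoff-splitting idea you mention first can simply be dropped.
\item Take the contradiction hypothesis with strict inequality, as the negation of $o(\cdot)$ naturally gives. Then $V_\ell\not\equiv 0$ holds automatically along the subsequence. Your claim that $V_\ell\not\equiv0$ eventually is not justified for a general $w$ (e.g.\ $w$ vanishing near $0$), but it is not needed.
\end{itemize}
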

\begin{proof}
  Arguing by contradiction, assume that, for some subsequence
  $\{b_{\ell_k}\}_k$, with $b_{\ell_k}=(b_{\ell_k,1},b_{\ell_k,2})$, and some constant $C>0$,
     \begin{equation}\label{eq:ass-by-contradiction}
    \Big\|V^{b_{\ell_k},w}_{\Omega_s, \zeta}\Big\|_{L^2(\Omega_s)}>C
    \Big\|\nabla
      V^{b_{\ell_k},w}_{\Omega_s, \zeta}\Big\|_{L^2\big(\Omega_s\setminus
      S_{b_{\ell_k}}^{\Omega_s}\big)}\quad\text{for all }k\in\N.
  \end{equation}
  For every $k\in\N$, let
  \begin{equation*}
    \omega_k:=\{(x_1,x_2)\in\R^2:(x_1,x_2+b_{\ell_k,2})\in\Omega_s\}
  \end{equation*}
  and
  \begin{equation*}
    W_k:\omega_k\to\R,\quad 
    W_k(x_1,x_2)= V^{b_{\ell_k},w}_{\Omega_s, \zeta}(x_1,x_2+b_{\ell_k,2}).
  \end{equation*}
  Since $V^{b_{\ell_k},w}_{\Omega_s, \zeta}\in H^1(\Omega_s\setminus
  S^{\Omega_s}_{b_{\ell_k}})$ and, if $k$ is sufficiently large, $S^{\Omega_s}_{b_{\ell_k}}=\{(t,
  b_{\ell_k,2}):0\leq t\leq b_{\ell_k,1}\}$, we
  have
  \begin{equation*}
    W_k\in H^1(\omega_k\setminus s_k)
  \end{equation*}
  where $s_k=\{(t,0): 0\leq t\leq b_{\ell_k,1}\}$. Moreover, since
  $b_{\ell_k}\to 0$ as $k\to+\infty$, there exist $R_1,R_2,r_0>0$ such that
  \begin{equation*}
    \Omega_s,\omega_k\subset R=(0,R_1)\times
    (-R_2,R_2)\quad\text{and}\quad
s_k\subset s:=\{(t,0): 0\leq t\leq r_0\}\subset\Omega_s\cup\{0\}
\end{equation*}
for $k$ sufficiently large.  Since the domains $\omega_k$ are small
vertical translations of the $C^{1,\alpha}$ fixed domain $\Omega_s$, it is easily seen that there
exist uniformly bounded extension operators, i.e., for every $k$
sufficiently large there exists $\mathcal P_k:H^1(\omega_k\setminus
s)\to H^1(R\setminus s)$ such that 
\begin{equation}\label{eq:uniform-ext}
  \mathcal P_k(v)\big|_{\omega_k\setminus s} =v \quad \text{ and } \quad
  \|\mathcal P_k(v)\|_{H^1(R\setminus s)}\leq K\|v\|_{H^1(\omega_k\setminus
    s)}
\end{equation}
for some positive constant $K$ that does not depend on
$k$. Furthermore, denoting with  $|\cdot|$ the Lebesgue measure in
$\R^2$, we have
\begin{equation}\label{eq:meas-to-zero}
\lim_{k\to\infty} \Big( |\omega_k\setminus\Omega_s|+  |\Omega_s \setminus\omega_k|\Big)=0,
\end{equation}
due to the fact that $b_{\ell_k}\to 0$ as $k\to+\infty$.

For $k$ sufficiently large, let
\begin{equation*}
  V_k=\frac{\mathcal P_k(W_k)}{\|W_k\|_{L^2(\omega_k)}}.
\end{equation*}
From \eqref{eq:ass-by-contradiction} and \eqref{eq:uniform-ext} it
follows that $\{V_k\}_k$ is uniformly bounded in $H^1(R\setminus
s)$. Hence there exists $V\in H^1(R\setminus s)$ such that, along a
subsequence still denoted as $\{V_k\}_k$, $V_k\rightharpoonup V$
weakly in $H^1(R\setminus s)$. Since $V_k\in H^1(R\setminus s_k)$,
Proposition \ref{prop_mosco} implies that $V\in H^1(R)$. Moreover, by
compactness of the embedding
$H^1(R\setminus s)\hookrightarrow \hookrightarrow L^2(R)$,
\begin{align*}
  1&=\int_{\omega_k}V_k^2\,dx=\int_{\Omega_s}V_k^2\,dx+\int_{\omega_k\setminus\Omega_s}V_k^2\,dx
     -\int_{\Omega_s\setminus \omega_k}V_k^2\,dx\\
  &=\int_{\Omega_s}V^2\,dx+o(1)\quad\text{as }k\to\infty,
\end{align*}
where we have used the fact, for every $\tau\in(2,+\infty)$, 
\begin{align*}
  \left| \int_{\omega_k\setminus\Omega_s}V_k^2\,dx
    -\int_{\Omega_s\setminus \omega_k}V_k^2\,dx\right|&\leq \Big(
  |\omega_k\setminus\Omega_s|+ |\Omega_s
  \setminus\omega_k|\Big)^{\frac{\tau-2}\tau}\|V_k\|_{L^\tau(R)}^2\\
  & \leq \mathop{\rm const} \Big( |\omega_k\setminus\Omega_s|+
  |\Omega_s
  \setminus\omega_k|\Big)^{\frac{\tau-2}\tau}\|V_k\|_{H^1(R\setminus
    s)}^2=o(1) \quad\text{as }k\to\infty,
\end{align*}
by H\"older's inequality, Sobolev's embeddings, and 
\eqref{eq:meas-to-zero}. In particular, we obtain
\begin{equation}
  \label{eq:Vnontriv}
  \int_{\Omega_s}V^2\,dx=1.
\end{equation}
Let $\varphi \in H^1(\Omega_s)$ be such that $\varphi\equiv0$ in a
neighborhood of $0$.
In particular, in view of \eqref{eq:cara_La2}, we have
\begin{equation}\label{eq:Lzero}
  L^{b_{\ell_k},w}_{\Omega_s}(\varphi)=0\quad\text{provided $k$ is sufficiently
    large}.
\end{equation}
Let $\tilde\varphi\in H^1(R)$ be an extension of $\varphi$ to $R$.  We
also denote as $\tilde \zeta$ the trivial extension of
$\zeta$ in $\R^2\setminus\Omega_s$.  We observe that
$\tilde \zeta (\cdot, \cdot+b_{\ell_k,2})$ converges to $\tilde \zeta$ weakly* in
$L^\infty(\R^2)$, thus implying
\begin{equation}\label{eq:fromweakstar}
  \int_{\Omega_s}\tilde \zeta (x_1,x_2+b_{\ell_k,2})
  V_k\varphi\,dx\to
   \int_{\Omega_s}\tilde \zeta
  V\varphi\,dx\quad\text{as }k\to\infty.
\end{equation}
From the definition of $V_k$, \eqref{eq:eq_Va}, \eqref{eq:Lzero},
and \eqref{eq:fromweakstar},  it
follows that
\begin{align*}
 0&= \int_{\omega_k\setminus s_k}\Big(\nabla
  V_k\cdot\nabla\tilde\varphi+\zeta (x_1,x_2+b_{\ell_k,2})
    V_k\tilde\varphi\Big)\,dx\\
  &=\int_{\Omega_s\setminus s_k}\Big(\nabla
  V_k\cdot\nabla\varphi+\tilde \zeta (x_1,x_2+b_{\ell_k,2})
  V_k\varphi\Big)\,dx+
  \int_{\omega_k\setminus \Omega_s}\Big(\nabla
  V_k\cdot\nabla\tilde\varphi+\zeta (x_1,x_2+b_{\ell_k,2})
    V_k\tilde\varphi\Big)\,dx\\
&\quad-\int_{\Omega_s\setminus \omega_k}\Big(\nabla
  V_k\cdot\nabla\tilde\varphi+\tilde \zeta (x_1,x_2+b_{\ell_k,2})
                                  V_k\tilde\varphi\Big)\,dx\\
  &=\int_{\Omega_s}\big(\nabla
  V\cdot\nabla\varphi+\zeta
  V\varphi\big)\,dx+o(1)\quad\text{as }k\to\infty,
\end{align*}
because
\begin{align*}
&\left|   \int_{\omega_k\setminus \Omega_s}\Big(\nabla
  V_k\cdot\nabla\tilde\varphi+\zeta (x_1,x_2+b_{\ell_k,2})
    V_k\tilde\varphi\Big)\,dx-\int_{\Omega_s\setminus \omega_k}\Big(\nabla
  V_k\cdot\nabla\tilde\varphi+\tilde \zeta (x_1,x_2+b_{\ell_k,2})
                 V_k\tilde\varphi\Big)\,dx\right|\\
  &\leq \mathop{\rm const}\|V_k\|_{H^1(R\setminus s)}\big(
    \|\tilde\varphi\|_{H^1(\Omega_s\setminus \omega_k)}+
    \|\tilde\varphi\|_{H^1(\omega_k\setminus
    \Omega_s)}\Big)=o(1)\quad\text{as }k\to\infty
\end{align*}
in view of the boundedness of $\{V_k\}$ in $H^1(R\setminus s)$ and
\eqref{eq:meas-to-zero}.

Hence, we obtain
\begin{equation*}
  \int_{\Omega_s}\big(\nabla
  V\cdot\nabla\varphi+\zeta
  V\varphi\big)\,dx=0,
\end{equation*}
for every $\varphi \in H^1(\Omega_s)$ such that $\varphi\equiv0$ in a
neighborhood of $0$. Since the singleton $\{0\}$ has null capacity in
$\R^2$, the above identity holds for every
$\varphi \in H^1(\Omega_s)$, thus implying $V\equiv 0$ in $\Omega_s$
and contradicting \eqref{eq:Vnontriv}.
\end{proof}

\subsection{Reduction to the case with constant jump across the
  segment}
If the boundary of the domain includes a straight segment as in
\eqref{eq:Omega-s}, and the poles ${b_\ell}$ converge to a fixed point on
that segment, then $\mathcal E^{b_\ell,w}_{\Omega_s, \zeta}$ can be
approximated with $\mathcal E^{b_\ell,\kappa}_{\Omega_s, \zeta}$, where
$\kappa=w(0)$.

\begin{proposition}\label{p:confronto-costante}
  Let $\Omega_s$ satisfy \eqref{eq:c1a} and \eqref{eq:Omega-s}. Let
  $w\in H^1(\Omega_s)$ be an eigenfunction of $(E_{\Omega_s,\zeta})$ for
  some $\zeta\in L^\infty(\Omega_s)$ satisfying
  \eqref{hp_p}, and $\kappa:=w(0)$.
  Let $\{b_\ell\}\subset\Omega_s$ be such that $b_\ell\to 0$
  as $\ell\to\infty$. Then
  \begin{equation*}
    \mathcal E^{b_\ell,w}_{\Omega_s, \zeta}=
    \mathcal E^{b_\ell,\kappa}_{\Omega_s, \zeta}+
    o\left(\frac1{|\log\mathop{\rm
          dist}(b_\ell,\partial\Omega_s)|}\right)
    \quad\text{as }\ell\to\infty.
  \end{equation*}
\end{proposition}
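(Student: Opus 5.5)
Write $b=b_\ell$, $S=S^{\Omega_s}_b$, $d=\mathop{\rm dist}(b,\partial\Omega_s)$, $q=q^b_{\Omega_s,\zeta}$. The plan is to exploit the linearity of the minimization problem \eqref{eq:E_a} with respect to the datum $u$.

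Since $w-\kappa\in\mathcal F(\Omega_s)\cap C^1(\overline{\Omega_s})$ (constants satisfy $\Delta\kappa=0$ and $\partial_\nu\kappa=0$), the potentials are linear in the datum. Indeed, $V^{b,w}=V^{b,\kappa}+V^{b,w-\kappa}$ by uniqueness in \eqref{eq:eq_Va}, because the jump constraint and the right-hand side $L^{b,u}$ are both linear in $u$. Moreover, $L^{b,\kappa}\equiv0$, so testing \eqref{eq:eq_Va} with admissible functions gives $\mathcal E^{b,u}=-\frac12 q(V^{b,u},V^{b,u})+L^{b,u}(V^{b,u})\cdot\frac12$ type identities. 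More directly, expanding $J^{b,w}$ at $V^{b,\kappa}+V^{b,w-\kappa}$ gives
\begin{equation*}
\mathcal E^{b,w}=\mathcal E^{b,\kappa}+\mathcal E^{b,w-\kappa}+q(V^{b,\kappa},V^{b,w-\kappa}),
\end{equation*}
using $L^{b,\kappa}=0$ and $L^{b,w}=L^{b,w-\kappa}$. Thus it suffices to prove
\begin{equation*}
\mathcal E^{b,w-\kappa}=o(1/|\log d|)\quad\text{and}\quad\|V^{b,w-\kappa}\|_{H^1(\Omega_s\setminus S)}=o(1/\sqrt{|\log d|}).
\end{equation*}
The cross term is then handled by Cauchy--Schwarz together with \eqref{eq:const-case}, which gives $\|V^{b,\kappa}\|_{H^1}=O(1/\sqrt{|\log d|})$.

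To get these bounds for $u:=w-\kappa$, I would redo the proofs of Propositions \ref{prop_La} and \ref{prop_Ee_limit}, but now exploit the fact that $u(0)=0$. By \eqref{eq:reg_u_n} (or Lipschitz regularity after the even reflection of Remark \ref{rem:cf}), $|u(x)|\le C|x|$ near $0$.

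For the upper bound on $\mathcal E^{b,u}$, I would use the competitor $u\,\eta_r(\cdot-P_b)$, possibly with a cutoff built from radii $d$ and $\sqrt d$, or rather a cutoff $\eta$ whose support stays within distance $\sqrt d$ of $0$ (since $|P_b|\to0$, the support is close to $0$ for large $\ell$). The dangerous term $\frac12\int u^2|\nabla\eta|^2$ is then bounded by $\sup_{|x|\le 2\sqrt d+|P_b|}u^2\cdot 4\pi/|\log d|=o(1/|\log d|)$. All other terms are $O(d)$ or $O(\sqrt d\,/\sqrt{|\log d|})$.

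The same estimate improves Proposition \ref{prop_La}: in the splitting used there, the term $\int v\,\nabla u\cdot\nabla\eta$ is the only $O(1/\sqrt{|\log d|})$ contribution. I expect this step to be the main obstacle, because there the factor is $\nabla u$ rather than $u$, and $\nabla u$ need not vanish at $0$. To handle it, I would integrate by parts on the crack instead, using \eqref{eq:cara_La2}: $L^{b,u}(v)=-\int_S(\nabla u\cdot\nu_b)[v]\,dS$. On the flattened boundary the crack is horizontal, so $\nabla u\cdot\nu_b=\pm\partial_{x_2}u$. Since $\partial_{x_1}u=0$ on $\{x_1=0\}$ is the Neumann condition, it is the tangential derivative $\partial_{x_2}u(0)$ that matters, and it does not vanish in general. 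So this route fails, and the better approach is a \emph{rearrangement}: write $L^{b,u}(v)=L^{b,u}(v(1-\eta))+L^{b,u}(v\eta)$, where the first term vanishes, and bound $|L^{b,u}(v\eta)|\le\|\nabla u\|_\infty\,|S|^{1/2}\,\|[v\eta]\|_{L^2(S)}$. Here $|S|=d$, and the trace of $v\eta$ on $S$ is controlled by $\|v\eta\|_{H^1}$ with constants uniform in the scale via a rescaled trace inequality on a disc of radius $\sqrt d$. This gives $|L^{b,u}(v\eta)|\le C\sqrt d\,|\log d|^{1/2}\|v\|_{H^1}$, possibly up to a logarithmic loss, which is still $o(1/\sqrt{|\log d|})$.

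With $\|L^{b,u}\|_*=o(1/\sqrt{|\log d|})$ and $\mathcal E^{b,u}\le o(1/|\log d|)$, inequality \eqref{eq:est-Ea+La} yields $\|V^{b,u}\|_{H^1}=o(1/\sqrt{|\log d|})$ and the lower bound $\mathcal E^{b,u}\ge -o(1/|\log d|)$. Inserting these bounds into the decomposition above concludes the proof.
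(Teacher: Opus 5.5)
Your proof is correct in substance. It rests on the same two mechanisms as the paper's proof:
\begin{itemize}
\item the logarithmic cut-off $\eta_{d}(\cdot-P_b)$ multiplied by $w-\kappa$, whose vanishing at $0$ removes the $4\pi/|\log d|$ contribution;
\item the $O(\sqrt d)$ size of integrals over the crack.
\end{itemize}
The organization differs, though. The paper sets $Q_\ell=V^{b_\ell,w}_{\Omega_s,\zeta}-V^{b_\ell,\kappa}_{\Omega_s,\zeta}$, which is your $V^{b,w-\kappa}$. It tests the equation for $Q_\ell$ with $Q_\ell-\tilde\eta_\ell(w-\kappa)$ to get $\|\nabla Q_\ell\|_{L^2}^2=o(1/|\log d|)$, then compares the two energies term by term, disposing of every zero-order term through Proposition~\ref{p:negli}. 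You instead expand $J^{b,w}$ bilinearly and apply the coercivity inequality \eqref{eq:est-Ea+La} to the datum $w-\kappa$. Since the weight term is part of $q$, this controls the full $H^1$ norm of $V^{b,w-\kappa}$. So the compactness argument of Proposition~\ref{p:negli} is not needed for this statement; the paper needs it anyway for \eqref{eq:asyE2}, so there it costs nothing. Your route also isolates the sharper bound $\|L^{b,w}\|_{(H^1(\Omega_s\setminus S))^*}=O(\sqrt d)$, which the paper uses implicitly in \eqref{eq:estQ5} and \eqref{eq:fi2}.

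Two corrections are needed.

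First, your displayed identity drops a term. With $u=w-\kappa$, expanding $J^{b,w}(V^{b,\kappa}+V^{b,u})$ gives
\begin{equation*}
\mathcal E^{b,w}=\mathcal E^{b,\kappa}+\mathcal E^{b,u}+q(V^{b,\kappa},V^{b,u})-L^{b,u}(V^{b,\kappa}).
\end{equation*}
The last term need not vanish, because $V^{b,\kappa}$ generally jumps across $S$. It is harmless, being at most $\|L^{b,u}\|_*\|V^{b,\kappa}\|_{H^1}=O(\sqrt{d/|\log d|})$, but it must be accounted for.

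Second, your claim that the crack-integral route via \eqref{eq:cara_La2} ``fails'' is wrong, and your ``rearrangement'' is exactly that route. The smallness comes from $|S|^{1/2}=\sqrt d$, not from $\nabla w\cdot\nu_b$ vanishing. No rescaled trace inequality and no logarithmic loss are involved. For $\ell$ large, $S=[0,b_1]\times\{b_2\}$ and $\Omega_s$ contains a fixed rectangle $(0,r)\times(-r,r)$. Traces on $[0,r]\times\{b_2\}$ from the rectangles $(0,r)\times(b_2,b_2+r/2)$ and $(0,r)\times(b_2-r/2,b_2)$ are then bounded uniformly in $b_2$. This gives directly $|L^{b,w}(v)|\le C\sqrt d\,\|v\|_{H^1(\Omega_s\setminus S)}$.
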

\begin{proof}
  Recalling the notations introduced in \eqref{eq:def_Va} and
  \eqref{eq:E_a}, we set, for every $\ell\in\N$,
  \begin{equation*}
    Y_\ell:=V^{b_\ell,w}_{\Omega_s, \zeta},\quad 
    Z_\ell:=V^{b_\ell,\kappa}_{\Omega_s, \zeta},\quad\text{and}\quad
    Q_\ell:=Y_\ell-Z_\ell.
  \end{equation*}
  We also denote
  \begin{equation*}
    \tilde \eta_\ell:=\eta_{d_{b_\ell}}(x-P_{b_\ell})
  \end{equation*}
  where $\eta_{d_{b_\ell}}$ is defined in \eqref{eq:def-cutoff}.  In view
  of \eqref{eq:eq_Va} we have
  \begin{equation*}
    q^{b_\ell}_{\Omega_s,\zeta}(Q_\ell,Q_\ell-\tilde \eta_\ell(w-\kappa))=
     L^{b_\ell,w}_{\Omega_s}(Q_\ell-\tilde \eta_\ell(w-\kappa)),
   \end{equation*}
   hence, in view of \eqref{eq:cara_La2},
   \begin{align}
   \notag & \int_{\Omega_s\setminus S_{b_\ell}^{\Omega_s}}|\nabla Q_\ell|^2\,dx=-
     \int_{\Omega_s}\zeta Q_\ell^2\,dx+
     \int_{\Omega_s\setminus S_{b_\ell}^{\Omega_s}}\nabla Q_\ell\cdot
                                                  \nabla(\tilde \eta_\ell(w-\kappa))\,dx\\
     &\label{eq:estQ1}+
     \int_{\Omega_s}\zeta Q_\ell \tilde \eta_\ell(w-\kappa)\,dx
     -2\int_{ S_{b_\ell}^{\Omega_s}}
(\nabla w\cdot \nu_{b_\ell}^{\Omega_s})
       \gamma_{b_\ell,\Omega_s}^+(Q_\ell)\,dS
       +2\int_{ S_{b_\ell}^{\Omega_s}}
(\nabla w\cdot \nu_{b_\ell}^{\Omega_s})
   \tilde \eta_\ell(w-\kappa)\,dS.
   \end{align}
From Proposition \ref{p:negli}, \eqref{eq:Vato0}, and
\eqref{eq:const-case} it follows that
\begin{equation}\label{eq:Qj}
   \int_{\Omega_s}\zeta Q_\ell^2\,dx=o\left(\frac1{|\log\mathop{\rm
         dist}(b_\ell,\partial\Omega_s)|}\right)
   \quad\text{as }\ell\to\infty.
\end{equation}
From \eqref{eq:def-cutoff}, \eqref{eq:int-cut},
\eqref{eq:const-case}, \eqref{eq:Vato0}, 
and
the fact that
\begin{equation*}
  w(x)=\kappa+O(|x|)\quad\text{as }|x|\to0,
\end{equation*}
we obtain
\begin{align}
\notag\int_{\Omega_s\setminus S_{b_\ell}^{\Omega_s}}\nabla Q_\ell\cdot
\nabla(\tilde
\eta_\ell(w-\kappa))\,dx&=
\notag\int_{\Omega_s\setminus
  S_{b_\ell}^{\Omega_s}}(w-\kappa)\nabla Q_\ell\cdot\nabla\tilde
\eta_\ell\,dx+ \int_{\Omega_s\setminus
  S_{b_\ell}^{\Omega_s}}\tilde \eta_\ell\nabla Q_\ell
                       \cdot   \nabla w\,dx\\
\notag &=o\left(\frac1{|\log\mathop{\rm
    dist}(b_\ell,\partial\Omega_s)|}\right)+\sqrt{\mathop{\rm
         dist}(b_\ell,\partial\Omega_s)}
    O\left(\frac1{\sqrt{|\log \mathop{\rm
    dist}(b_\ell,\partial\Omega_s)|}}\right)\\
  &\label{eq:estQ2}=o\left(\frac1{|\log\mathop{\rm
         dist}(b_\ell,\partial\Omega_s)|}\right)
   \quad\text{as }\ell\to\infty.
\end{align}
In view of \eqref{eq:Qj} and \eqref{eq:def-cutoff}   we have
\begin{equation}\label{eq:estQ3}
  \int_{\Omega_s}\zeta Q_\ell \tilde \eta_\ell(w-\kappa)\,dx =o\left(\frac1{|\log\mathop{\rm
         dist}(b_\ell,\partial\Omega_s)|}\right)
   \quad\text{as }\ell\to\infty.
\end{equation}
Furthermore, in view of \eqref{eq:reg_u_n}, $w$ and $\nabla w\cdot
\nu_{b_\ell}^{\Omega_s}$
are uniformly bounded on
   $S_{b_\ell}^{\Omega_s}$, so that
   \begin{equation}\label{eq:estQ4}
\int_{ S_{b_\ell}^{\Omega_s}}
(\nabla w\cdot \nu_{b_\ell}^{\Omega_s})
   \tilde \eta_\ell(w-\kappa)\,dS=O(\mathop{\rm
         dist}(b_\ell,\partial\Omega_s))=o\left(\frac1{|\log\mathop{\rm
         dist}(b_\ell,\partial\Omega_s)|}\right)
   \quad\text{as }\ell\to\infty,
 \end{equation}
 and, by well-known trace theorems, \eqref{eq:Vato0}, and
\eqref{eq:const-case}
 \begin{align}
\notag   \int_{ S_{b_\ell}^{\Omega_s}}
(\nabla w\cdot \nu_{b_\ell}^{\Omega_s})
       \gamma_{b_\ell,\Omega_s}^+(Q_\ell)\,dS&=O\Big(\sqrt{\mathop{\rm
         dist}(b_\ell,\partial\Omega_s)}\|Q_\ell\|_{H^1(\Omega_s\setminus
       S_{b_\ell}^{\Omega_s})}\Big)\\
\label{eq:estQ5}&     =o\left(\frac1{|\log\mathop{\rm
         dist}(b_\ell,\partial\Omega_s)|}\right)
   \quad\text{as }\ell\to\infty.
 \end{align}
  Combining \eqref{eq:estQ1} with \eqref{eq:Qj}, \eqref{eq:estQ2},
  \eqref{eq:estQ3},  \eqref{eq:estQ4}, and \eqref{eq:estQ5}, we
  finally obtain
  \begin{equation*}
    \int_{\Omega_s\setminus S_{b_\ell}^{\Omega_s}}|\nabla Q_\ell|^2\,dx=o\left(\frac1{|\log\mathop{\rm
         dist}(b_\ell,\partial\Omega_s)|}\right)
   \quad\text{as }\ell\to\infty.
  \end{equation*}
For the above estimate, \eqref{eq:Vato0}, and
\eqref{eq:const-case} it follows that
\begin{align}
 \notag \|\nabla Z_\ell\|^2_{L^2(\Omega_s\setminus S_{b_\ell}^{\Omega_s})} -
\|\nabla Y_\ell\|^2_{L^2(\Omega_s\setminus
  S_{b_\ell}^{\Omega_s})}&=\int_{\Omega_s\setminus S_{b_\ell}^{\Omega_s}}
(\nabla Z_\ell -\nabla Y_\ell)  \cdot
                        (\nabla Z_\ell +\nabla Y_\ell) \, dx\\
 \notag &=-\int_{\Omega_s\setminus S_{b_\ell}^{\Omega_s}}
\nabla Q_\ell\cdot  
    (\nabla Z_\ell +\nabla Y_\ell) \, dx\\
 \label{eq:fi1} &=o\left(\frac1{|\log\mathop{\rm
         dist}(b_\ell,\partial\Omega_s)|}\right)
   \quad\text{as }\ell\to\infty.
\end{align}
Furthermore, in view of \eqref{eq:cara_La2}, well-known trace theorems,
and \eqref{eq:Vato0},
\begin{align}
\notag  L^{b_\ell,w}_{\Omega_s}(Y_\ell)
&=-\int_{S_{b_\ell}^{\Omega_s}}(\nabla w\cdot \nu_{b_\ell}^{\Omega_s})
   (\gamma_{b_\ell,\Omega_s}^+(Y_\ell)-\gamma_{b_\ell,\Omega_s}^-(Y_\ell))=O\Big(\sqrt{\mathop{\rm
         dist}(b_\ell,\partial\Omega_s)}\|Y_\ell\|_{H^1(\Omega_s\setminus
       S_{b_\ell}^{\Omega_s})}\Big)\\
 \label{eq:fi2} &=o\left(\frac1{|\log\mathop{\rm
         dist}(b_\ell,\partial\Omega_s)|}\right)
   \quad\text{as }\ell\to\infty.
\end{align}
From \eqref{eq:fi1}, \eqref{eq:fi2}, Proposition \ref{p:negli}, \eqref{eq:Vato0}, and
\eqref{eq:const-case} it follows that
\begin{align*}
   \mathcal E^{b_\ell,w}_{\Omega_s, \zeta}-
   \mathcal E^{b_\ell,\kappa}_{\Omega_s, \zeta}&=\frac12\left(
     \|\nabla Y_\ell\|^2_{L^2(\Omega_s\setminus S_{b_\ell}^{\Omega_s})} -
\|\nabla Z_\ell\|^2_{L^2(\Omega_s\setminus
                                              S_{b_\ell}^{\Omega_s})}\right)\\
  &\quad +\frac12 \left(\int_{\Omega_s}\zeta
Y_\ell^2\,dx-
    \int_{\Omega_s}\zeta Z_\ell^2\,dx\right)-L^{b_\ell,w}_{\Omega_s}(Y_\ell)\\
  &=o\left(\frac1{|\log\mathop{\rm
         dist}(b_\ell,\partial\Omega_s)|}\right)
   \quad\text{as }\ell\to\infty.
\end{align*}
The proof is thereby complete.  
\end{proof}

\subsection{The case of ellipses}

Passing to elliptic coordinates, a very explicit estimate for
$\mathcal E^{a,\kappa}_{\omega, \zeta}$ can be derived when $\omega$
is a half-ellipse whose axis lies along the segment $S_{a}^{\omega}$.

For every $L>0$ and $\e\in(0,L/2)$, let us consider the halved ellipse defined as
\begin{equation*}
  E_\e(L):=\left\{(x_1,x_2)\in \R^2:
    x_1>0\text{ and }\frac{x_1^2}{L^2+\e^2}+\frac{x^2_2}{L^2}<1\right\}
\end{equation*}
and the segment
\begin{equation*}
  s_\e=[0,\e]\times\{0\}.
\end{equation*}
Let us  introduce the elliptic coordinates $(\xi, \eta)$:
\begin{equation}\label{def_elliptic_coor}
\begin{cases}
x_1=\e\cosh \xi \cos \eta, \\
x_2=\e\sinh\xi \sin \eta, \\
\end{cases}
\quad \xi\ge 0, \, \eta \in [-\pi/2,\pi/2],
	\end{equation} 
see e.g. \cite[Section 2.2]{AFHL} or \cite[Proposition
7.2]{FNOS_multi}.
In these coordinates, the segment  $s_\e$  is described by the
conditions 
\begin{equation}\label{proof_prop_Kj_1}
 \xi=0,\quad \eta\in [-\pi/2,\pi/2],
\end{equation}
 while the half-ellipse $E_\e(L)$ is characterized by 
\begin{equation*}
\xi\in [0,\xi_\e),\quad \eta\in (-\pi/2,\pi/2),
\end{equation*}
where $\xi_\e$ is such that $\e\sinh(\xi_\e)=L$, that is 
\begin{equation}\label{def_xi_j}
  \xi_\e=\mathop{\rm{arcsinh}}\left(\frac{L}{\e}\right)=
  \log\left(\frac{L}{\e}+\sqrt{1+\frac{L^2}{\e^2}}\right).
\end{equation}
In particular, $\partial E_\e(L)$ is described by the conditions 	
\begin{equation*}
  \xi=\xi_\e\text{ and } \eta \in (-\pi/2,\pi/2),
  \quad \text{or} \quad \xi \in [0,\xi_\e]\text{ and } \eta=-\pi/2,
  \quad \text{ or } \quad \xi \in [0,\xi_\e]\text{ and } \eta=\pi/2.
\end{equation*}
The map 
\begin{align*}
  &F_\e:[0,\xi_\e) \times (-\pi/2,\pi/2) \to E_\e(L), \\
  &F_\e(\xi,\eta)=(x_1,x_2)
    =\Big(\e\cosh\xi \cos \eta, \e\sinh\xi \sin \eta\Big),
\end{align*}
defined by \eqref{def_elliptic_coor},  has a Jacobian matrix of the form 
\begin{equation*}
J_{F_\e}(\xi,\eta)=\e \sqrt{\cosh^2\xi-\cos^2\eta}\, \,O(\xi,\eta)
\end{equation*}
for some orthogonal matrix	$O(\xi,\eta)$,  while   
$\det J_{F_\e}(\xi,\eta)=\e^2(\cosh^2\xi-\cos^2\eta)$. 
In particular, $F_\e$ is a conformal map.

Let us define, in elliptic coordinates, the weight 
\begin{equation*}
\rho:[0,\xi_\e)\times (-\pi/2,\pi/2)\to\R,\quad 
\rho (\xi, \eta):=\frac{e^{2\xi}}{\cosh^2\xi-\cos^2\eta},
\end{equation*}
which  corresponds in Cartesian coordinates to the function
\begin{equation}\label{def_q}
  \rho_\e:E_\e(L)\to\R,\quad \rho_\e(x_1,x_2):=(\rho\circ F^{-1}_\e)(x_1,x_2).
\end{equation}
For any  $\alpha\in \R\setminus\{0\}$,   let us consider the minimization problem
\begin{equation*}
  \inf\left\{
    \int_{E_\e(L) \setminus s_\e}\big(|\nabla  w|^2 +\rho_\e w^2\big)\,
    dx:w \in H^1(E_\e(L) \setminus s_\e), \ 
    \gamma^+_\e(w-\alpha)+\gamma^-_\e(w-\alpha)=0 \text{ on } s_\e\right\},
\end{equation*}
where $\gamma^+_\e:=\gamma^+_{(\e,0), E_\e(L)}$ and
$\gamma^-_\e:=\gamma^-_{(\e,0), E_\e(L)}$ according to the notation
introduced in \eqref{eq:trace}.  We observe that the functional
minimized above is well defined in $H^1(E_\e(L) \setminus s_\e)$
because $\rho_\e\in L^r(E_\e(L))$ for some $r>1$. Moreover, for every
$\e\in(0,L/2)$ and $(\xi,\eta)\in [0,\xi_\e)\times(-\pi/2,\pi/2)$,
\begin{equation*}
  \rho_\e(\e\cosh(\xi) \cos (\eta),\e\sinh(\xi) \sin (\eta))=\rho(\xi,\eta)
  =\frac{e^{2\xi}}{\cosh^2\xi-\cos^2\eta} \ge
  \frac{e^{2\xi}}{\cosh^2\xi}
  \ge \frac{1}{\cosh^2\xi_\e}>0.
\end{equation*}
Hence, by a standard variational argument, there exists a unique
minimizer
\begin{equation}\label{eq:defWe}
  W_\e=W_{\e,\alpha,L} \in H^1(E_\e(L) \setminus s_\e),
\end{equation}
which satisfies
\begin{equation}\label{eq_Wj}
  \int_{E_\e(L) \setminus s_\e} \big(
  \nabla W_\e\cdot \nabla w +\rho_\e W_\e w \big)
  \, dx =0 \quad\text{for all }w \in \mathcal H_{(\e,0)}(E_\e(L)),
\end{equation}
see \eqref{eq:defHa}, 
i.e. $W_\e$  weakly solves  the problem
\begin{equation}\label{prob_Wj}
\begin{cases}
-\Delta W_\e+\rho_\e W_\e =0, &\text{in } E_\e(L)\setminus s_\e,\\[2pt]
\gamma^+_\e(W_\e-\alpha)+\gamma^-_\e(W_\e-\alpha)=0, & \text {on }  s_\e,\\[2pt]
\gamma^+_\e\big(\frac{\partial W_\e}{\partial x_2}\big)+
\gamma^-_\e\big(\frac{\partial W_\e}{\partial x_2}\big)
=0, & \text{on } s_\e,\\[2pt]
\frac{\partial W_\e}{\partial \nu}=0, & \text{on } \partial E_\e(L),
\end{cases}
\end{equation}
where $\nu$ is the outer normal vector to $\partial E_\e(L)$.

\begin{proposition}\label{prop_Wj}
We have 
\begin{equation}\label{eq_E_Wj}
\int_{E_\e(L)\setminus s_\e} |\nabla W_\e|^2 \, dx=
\frac{\pi\alpha^2}{|\log \e|}+o\left(\frac{1}{|\log\e|}\right) \quad
\text{as } \e \to 0^+,
\end{equation}
and 
\begin{equation}\label{eq_qjWj_o}
  \int_{E_\e(L)} \rho_\e W_\e^2 \, dx=o\bigg(\int_{E_\e(L)\setminus s_\e}
    |\nabla W_\e|^2 \, dx
  \bigg) \quad \text{as }  \e\to0^+.
\end{equation}
\end{proposition}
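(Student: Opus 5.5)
The plan is to move everything to elliptic coordinates \eqref{def_elliptic_coor}, where problem \eqref{prob_Wj} turns into a one-dimensional ODE that can be solved explicitly with modified Bessel functions. This works because $F_\e$ is conformal, so the Dirichlet integral is invariant. The weight $\rho_\e$ was chosen so that
\[
\rho_\e\,dx=\rho(\xi,\eta)\,\e^2(\cosh^2\xi-\cos^2\eta)\,d\xi\,d\eta=\e^2e^{2\xi}\,d\xi\,d\eta .
\]
Setting $\tilde W=W_\e\circ F_\e$ on the rectangle $R_\e=(0,\xi_\e)\times(-\pi/2,\pi/2)$, the minimized functional therefore becomes
\[
\int_{R_\e}\big(|\nabla\tilde W|^2+\e^2e^{2\xi}\tilde W^2\big)\,d\xi\,d\eta .
\]
The two sides of $s_\e$ correspond to $\{\xi=0,\eta>0\}$ and $\{\xi=0,\eta<0\}$, and the point $(\e\cos\eta,0)$ is reached from both $\eta$ and $-\eta$. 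The jump condition thus reads $\tilde W(0,\eta)+\tilde W(0,-\eta)=2\alpha$. The Neumann conditions on $\partial E_\e(L)$ become Neumann conditions on $\{\xi=\xi_\e\}$ and $\{\eta=\pm\pi/2\}$. I would first check that $F_\e$ induces a bijection between $H^1(E_\e(L)\setminus s_\e)$ and the corresponding weighted space on $R_\e$, with traces matching. This is routine given conformality and the behaviour of $F_\e$ near the foci. It allows me to identify $\tilde W$ as the unique minimizer of the transformed problem.

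Next I would split $\tilde W$ into its even and odd parts in $\eta$. The functional decouples between the two parts, and the constraint only involves the even part, which must equal $\alpha$ on $\{\xi=0\}$. The odd part is unconstrained (with Neumann data), so by uniqueness it vanishes. For the even part, all data are independent of $\eta$, so the minimizer is $\tilde W(\xi,\eta)=f(\xi)$, where
\[
f''=\e^2e^{2\xi}f \text{ on }(0,\xi_\e),\qquad f(0)=\alpha,\qquad f'(\xi_\e)=0 .
\]
I would justify this either by uniqueness plus averaging in $\eta$, or by Jensen's inequality applied to the $\eta$-average. With $t=\e e^\xi$ the equation becomes the modified Bessel equation of order $0$. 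Writing $t_\e=\e e^{\xi_\e}\to 2L$, the solution is
\[
f=\alpha\,\frac{A_\e I_0(t)+K_0(t)}{A_\e I_0(\e)+K_0(\e)},\qquad A_\e=\frac{K_1(t_\e)}{I_1(t_\e)}=O(1),
\]
where $A_\e$ is fixed by the Neumann condition, using $I_0'=I_1$ and $K_0'=-K_1$.

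Then I would compute both quantities. Integrating by parts and using $f'(\xi_\e)=0$,
\[
\int_{E_\e(L)\setminus s_\e}|\nabla W_\e|^2dx+\int\rho_\e W_\e^2dx=\pi\int_0^{\xi_\e}(f'^2+\e^2e^{2\xi}f^2)\,d\xi=-\pi f(0)f'(0).
\]
Here $f'(0)=\alpha\,\e\,(A_\e I_1(\e)-K_1(\e))/(A_\e I_0(\e)+K_0(\e))$. Since $\e K_1(\e)=1+o(1)$, $\e I_1(\e)=O(\e^2)$ and $K_0(\e)=|\log\e|+O(1)$, the total energy is $\pi\alpha^2/|\log\e|\,(1+o(1))$. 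It remains to show that the potential term $\pi\int_0^{\xi_\e}\e^2e^{2\xi}f^2\,d\xi=\pi\int_\e^{t_\e}tf^2\,dt$ is $o(1/|\log\e|)$; this gives \eqref{eq_qjWj_o}, and subtracting it from the total gives \eqref{eq_E_Wj}. On $t\in[\sqrt\e,t_\e]$ we have $A_\e I_0+K_0=O(|\log\e|)/2+O(1)$ against a denominator $\sim|\log\e|$. So there the bound I get is only $|f|\le C\alpha$, which is not small, and this is the point I expect to need care. I would handle it by a sharper split: for $t\in[\e^{\theta},t_\e]$ with $\theta$ small one has $K_0(t)\le\theta|\log\e|+O(1)$, hence $|f|\le\alpha(\theta+o(1))$, and for $t\le\e^\theta$ the trivial bound $|f|\le C\alpha$ with $\int_0^{\e^\theta}t\,dt$ tiny applies. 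This yields a potential term of order $O(\theta^2)$. That is not yet $o(1/|\log\e|)$, so this crude estimate is insufficient. The real fix is to use the exact identity $f(t)=\alpha\,(A_\e I_0(t)+K_0(t))/(K_0(\e)+O(1))$. For $t$ bounded away from $0$ the numerator is $O(1)$, so $f=O(1/|\log\e|)$. For small $t$ the numerator is $\log(1/t)+O(1)$, and $\int_\e^{1}t\log^2(1/t)\,dt=O(1)$. Hence $\int t f^2\,dt=O(1/|\log\e|^2)=o(1/|\log\e|)$, which is the claim. The main obstacle is therefore not these asymptotics but the rigorous functional-analytic identification in the first paragraph: transferring the crack problem, with its traces and the $\rho_\e\in L^r$ weight, to the rectangle.
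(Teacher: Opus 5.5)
Your proposal is correct and follows the paper's overall route. You transfer the problem to the rectangle $(0,\xi_\e)\times(-\pi/2,\pi/2)$ via the conformal map $F_\e$, where $\rho_\e\,dx=\e^2e^{2\xi}\,d\xi\,d\eta$. You then reduce to an $\eta$-independent profile solving the modified Bessel equation of order $0$ in $t=\e e^{\xi}$, and use small-argument asymptotics of $I_0,K_0,I_1,K_1$; your $f$ is exactly the paper's $\widehat W_\e=\frac12 a_{0,\e}$. Two steps differ.

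For $\eta$-independence, the paper expands $\widehat W_\e$ in $\cos(h\eta),\sin(h\eta)$, $h\geq1$, on $(-\pi/2,\pi/2)$ and asserts that all coefficients with $h\geq1$ vanish. On an interval of length $\pi$ this system is neither orthogonal nor adapted to the Neumann conditions at $\eta=\pm\pi/2$. Indeed, for the true solution $a_{1,\e}=\frac{2}{\pi}f\not\equiv0$, so $a_{1,\e}(0)=2\alpha/\pi\neq0$. That step really needs the basis $\{\cos(2k\eta)\}_{k\geq0}\cup\{\sin((2k+1)\eta)\}_{k\geq0}$. Your even/odd splitting plus $\eta$-averaging, closed by strict convexity and uniqueness of the minimizer, sidesteps this and is the cleaner justification.

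For the energies, the paper integrates $t|c_1I_1-c_2K_1|^2$ and $t|c_1I_0+c_2K_0|^2$ separately, the leading term being $\frac{\pi}{4}c_2^2\int_\e^{t_\e}tK_1^2\,dt$. You instead obtain the total energy from the boundary term $-\pi f(0)f'(0)$, which needs only $I_1,K_1$ at $t=\e$, and then subtract the potential term. Your final bound for that term is the paper's: $(K_0(\e)+O(1))^{-2}$ times the uniformly bounded $\int_0^{3L}t(A_\e I_0+K_0)^2\,dt$. So the $\theta$-splitting detour can simply be dropped.

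Finally, the transfer you single out as the main obstacle is routine. $F_\e$ is a conformal diffeomorphism of the open rectangle onto $E_\e(L)\setminus s_\e$, and a function with finite Dirichlet integral on a bounded rectangle lies in $H^1$. In the other direction, $\cosh^2\xi-\cos^2\eta\leq e^{2\xi}$ controls the $L^2$ norm.
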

\begin{proof}
  Let us define $\widehat{W}_\e:=W_\e\circ F_\e$, with $W_\e$ being
  the solution of \eqref{prob_Wj}. The Neumann boundary condition
  $\pd{W_\e}{\nu}=0$ in \eqref{prob_Wj} is then equivalent to
\begin{equation}\label{proof_prop_Kj_2}
  \pd{\widehat{W}_\e}{\xi}(\xi_\e,\eta)=0 \text{ for every }  \eta \in
  (-\pi/2,\pi/2)
  \quad \text{ and } \quad \pd{\widehat{W}_\e}{\eta}(\xi,\pm\pi/2)=0
  \text{ for every }  \xi \in (0,\xi_\e).
\end{equation}
Indeed, the curve $\eta\mapsto\big(\sqrt{L^2+ \e^2}\cos (\eta), L\sin(\eta)\big)$
parametrizes the curved part of the boundary of the half-ellipse
$E_\e(L)$, so that a normal vector to such curved boundary is given by
$\big( L\cos (\eta), \sqrt{L^2+ \e^2}\sin(\eta)\big)$.
Hence, 
\begin{align*}
\pd{\widehat{W}_\e}{\xi}(\xi_\e,\eta)&=\pd{W_\e}{x_1}\e \sinh(\xi_\e)
\cos(\eta)
+\pd{W_\e}{x_2}\e \cosh(\xi_\e) \sin(\eta)\\
&=\pd{W_\e}{x_1}L \cos(\eta)+\pd{W_\e}{x_2} \sqrt{L^2+\e^2}\sin(\eta)=0.
\end{align*}
The second condition in \eqref{proof_prop_Kj_2} can be obtain in a
similar way.

Since $F_\e$ is conformal, by \eqref{prob_Wj} and \eqref{proof_prop_Kj_1},
$\widehat{W}_\e$ turns out to solve 
\begin{equation}\label{prob_hatWj}
\begin{cases}
  -\Delta \widehat{W}_\e+\e^2 e^{2\xi} \widehat{W}_\e=0,
  &\text{in } (0,\xi_\e)\times (-\pi/2,\pi/2),\\[2pt]
  \widehat{W}_\e (0,\eta)+\widehat{W}_\e(0,-\eta)= 2 \alpha , &
  \text{for every  }  \eta \in (-\pi/2,\pi/2),\\[2pt]
  \pd{\widehat{W}_\e}{\xi}(0,\eta)-\pd{\widehat{W}_\e}{\xi}(0,-\eta)=0,
  & \text{for every  }  \eta \in (-\pi/2,\pi/2),\\[2pt]
  \pd{\widehat W_\e}{\xi}(\xi_\e, \eta)=0, & \text{for every }
  \eta \in (-\pi/2,\pi/2),\\[2pt]
  \pd{\widehat W_\e}{\eta}(\xi, \pm\pi/2)=0, & \text{for every } \xi
  \in (0,\xi_\e).
\end{cases}
\end{equation}
We expand $\widehat{W}_\e$ in Fourier series with respect to $\eta$:
\begin{equation*}
\widehat{W}_\e(\xi,\eta)=\frac{a_{0,\e}(\xi)}{2}+\sum_{h=1}^\infty
\Big(a_{h,\e}(\xi)
\cos(h\eta)+b_{h,\e}(\xi)\sin(h \eta)\Big),
\end{equation*}
where 
\begin{align*}
&a_{h,\e}(\xi):=\frac{1}{\pi}\int_{-\pi/2}^{\pi/2}\widehat{W}_\e(\xi,\eta) \cos(h \eta) \, d\eta \quad \text{for all } h\in \mathbb{N}, \\
&b_{h,\e}(\xi):=\frac{1}{\pi}\int_{-\pi/2}^{\pi/2}\widehat{W}_\e(\xi,\eta) \sin(h \eta) \, d\eta \quad \text{for all } h \in \mathbb{N}\setminus\{0\}.
\end{align*}
The boundary conditions in \eqref{prob_hatWj} imply
\begin{align} \label{proof_prop_Kj_3}
&a_{h,\e}(0)=0 \text{ for every } h \in \mathbb{N}\setminus \{0\},  \quad a'_{h,\e}(\xi_\e)=0 \text{ for every } h \in \mathbb{N},\\
\label{eq:coebor2}&b_{h,\e}'(0)=0 \text{ for every } h \in \mathbb{N}\setminus \{0\}, \quad  b'_{h,\e}(\xi_\e)=0 \text{ for every } h \in \mathbb{N}\setminus \{0\},\\
\label{eq:coebor3} &a_{0,\e}(0)=2 \alpha,
\end{align}
while the equation $-\Delta \widehat{W}_\e+\e^2e^{2\xi} \widehat{W}_\e=0$ yields
\begin{equation}\label{eq:eq-coef}
\begin{cases}
  a_{0,\e}''-\e^2 e^{2\xi} a_{0,\e}=0, &\text{in }(0,\xi_\e),\\
  a_{h,\e}''-(h^2+\e^2 e^{2\xi}) a_{h,\e}=0, &\text{in
  }(0,\xi_\e),\text{
    for all } h\in \mathbb{N}\setminus \{0\},\\
  b_{h,\e}''-(h^2+\e^2e^{2\xi}) b_{h,\e}=0, & \text{in }(0,\xi_\e),
  \text{ for all } h\in \mathbb{N}\setminus \{0\}.\\
\end{cases}
\end{equation}
Hence, in view of conditions \eqref{proof_prop_Kj_3}--\eqref{eq:coebor2},
we necessarily have $a_{h,\e}\equiv b_{h,\e}\equiv 0$ in
$(0,\xi_\e)$ for all  $h \in \mathbb{N}\setminus \{0\}$. 

In order to determine $a_{0,\e}$, let us consider
\begin{equation*}
  \tilde a_\e(t):=a_{0,\e}\big(\log (t/\e)\big).
\end{equation*}
Since 
\begin{align*}
&\tilde{a}'_{\e}(t)=t^{-1}a'_{0,\e}(\log(t/\e)),\\
&\tilde{a}''_{\e}(t)=-t^{-2}a'_{0,\e}(\log(t/\e)))+t^{-2}a''_{0,j\e}(\log(t/\e)),
\end{align*}
by \eqref{eq:eq-coef} $\tilde{a}_{\e}$ solves the modified Bessel equation 
\begin{equation*}
t^2 \tilde{a}''_{\e}+t \tilde{a}'_{\e}-t^2\tilde{a}_{\e}=0 \quad
\text{in } (\e, \e e^{\xi_\e}).
\end{equation*}
Hence, there exist two constants $c_1,c_2$, depending only on $\e$ and
$L$, such that
\begin{equation*}
\tilde{a}_{\e}(t)=c_1 I_0(t)+c_2 K_0(t),
\end{equation*}
where $I_0$ and $K_0$ are the  $0$-order modified Bessel functions 
of first and second
kind, respectively, see e.g. \cite[Chapter 3, Section
3.7]{W_Bessel_book}.  It follows that
\begin{equation*}
a_{0,\e}(\xi)=c_1 I_0(\e e^\xi)+c_2 K_0(\e e^\xi),\quad \xi\in(0,\xi_\e).
\end{equation*} 
Moreover, since $I_0'=I_1$ and $K_0'=-K_1$, see e.g.  \cite[Chapter 3,
Section 3.71]{W_Bessel_book},
\begin{equation*}
a'_{0,\e}(\xi)=\e e^\xi\big(c_1 I_1(\e e^\xi)-c_2 K_1(\e e^\xi)\big).
\end{equation*} 
The boundary conditions in \eqref{proof_prop_Kj_3} and
\eqref{eq:coebor3} imply 
\begin{align*}
&c_1=\frac{2\alpha K_1(\e e^{\xi_\e}) }{I_0(\e) K_1(\e
                 e^{\xi_\e})+K_0(\e) I_1(\e e^{\xi_\e})},\\
  &c_2=\frac{2\alpha I_1(\e e^{\xi_\e}) }{I_0(\e) K_1(\e
                 e^{\xi_\e})+K_0(\e) I_1(\e e^{\xi_\e})}.
\end{align*}
In conclusion, we have
\begin{equation*}
  \widehat{W}_\e(\xi,\eta)= \frac12 a_{0,\e}(\xi)=
  \alpha\, \frac{K_1(\e e^{\xi_\e})  I_0(\e e^\xi)+I_1(\e e^{\xi_\e})
    K_0(\e e^\xi)}
{I_0(\e) K_1(\e e^{\xi_\e})+K_0(\e) I_1(\e e^{\xi_\e})}.
\end{equation*}
We observe that the last boundary condition in \eqref{prob_hatWj} is obviously satisfied.

Let us study the asymptotic behavior as $\e\to0^+$ of $c_1$ and
$c_2$. By \cite[Chapter 3, Sections 3.7 and 3.71]{W_Bessel_book} we have
\begin{equation*}
I_0(\xi)=1+o(1) \quad \text{and}  \quad K_0(\xi)=|\log(\xi)|+o(\log(\xi)), \quad \text{as } \xi \to 0^+.
\end{equation*}
By \eqref{def_xi_j}
\begin{equation*}
\e e^{\xi_\e}=2L+o(1)\quad \text{as } \e \to 0^+, 
\end{equation*}
so that
\begin{align*}
  &c_1=\frac{2\alpha K_1(2L) }{I_1(2L)}\frac{1}{|\log\e|}+o\left(\frac{1}{|\log\e|}\right)  \text{ as }  \e \to 0^+,\\
  &c_2= \frac{2\alpha}{|\log\e|}+o\left(\frac{1}{|\log\e|}\right)  \text{ as }  \e\to0^+.
\end{align*}
By conformality of $F_\e$ and the change of variables $t=\e e^\xi$,
\begin{align*}
  \int_{E_\e(L)\setminus s_\e} |\nabla W_\e|^2 \, dx&= \int_0^{\xi_\e}\int_{-\pi/2}^{\pi/2} |\nabla \widehat W_\e|^2 \, d\eta\, d \xi\\
                                                    & =\frac\pi4 \int_0^{\xi_\e} |a'_{0,\e}(\xi)|^2 \, d\xi=
                                                      \frac\pi4
                                                      \e^2\int_0^{\xi_\e}
                                                      e^{2\xi}|c_1
                                                      I_1(\e
                                                      e^\xi)-c_2
                                                      K_1(\e e^\xi)|^2 \, d\xi\\
 & =\frac\pi4 \int_\e^{L+\sqrt{L^2+\e^2}} t\,
  |c_1 I_1(t)-c_2 K_1(t)|^2 \, dt\\
&  = \frac\pi4 c_1^2\int_\e^{L+\sqrt{L^2+\e^2}} t |I_1(t)|^2 \, dt + \frac\pi4 c_2^2\int_\e^{L+\sqrt{L^2+\e^2}}
  t |K_1(t)|^2 \, dt\\
  & \quad -\frac\pi2 c_1c_2\int_\e^{L+\sqrt{L^2+\e^2}} t I_1(t)K_1(t)
  \, dt.
\end{align*}
By \cite[Chapter 3, Sections 3.7 and 3.71]{W_Bessel_book},
\begin{equation*}
I_1(t)=O(t) \quad \text{ and }  \quad K_1(t)=\frac1t+ \frac{t}{2}\log
t+o(t\log t) \quad \text{as } t \to 0^+.
\end{equation*}
Hence 
\begin{equation*}
  \int_{E_\e(L)\setminus s_\e} |\nabla W_\e|^2 \, dx=
  \frac{\pi\alpha^2}{|\log\e|}+o\left(\frac{1}{|\log\e|}\right) \quad \text{as } \e \to 0^+,
\end{equation*}
thus proving \eqref{eq_E_Wj}.
Similarly, taking into account \eqref{def_q},
\begin{align*}
  \int_{E_\e(L)} &\rho_\e W_\e^2 \, dx
                   =\pi\e^2  \int_0^{\xi_\e} e^{2\xi}\widehat W_\e^2 \, d \xi\\
                                &=\frac{\pi}{4}\e^2 \int_0^{\xi_\e}
                                  e^{2\xi} |a_{0,\e}(\xi)|^2 \, d\xi
                                  =\frac\pi 4\e^2\int_0^{\xi_\e}
                                  e^{2\xi}|c_1 I_0(\e e^\xi)+c_2
                                  K_0(\e e^\xi)|^2 \, d\xi\\
&=\frac\pi4 \int_\e^{L+\sqrt{L^2+\e^2}} t |c_1 I_0(t)+c_2 K_0(t)|^2 \, dt\\
&= \frac\pi4 c_1^2\int_\e^{L+\sqrt{L^2+\e^2}} t |I_0(t)|^2 \, dt
                                + \frac\pi4 c_2^2\int_\e^{L+\sqrt{L^2+\e^2}} t |K_0(t)|^2 \, dt\\
  &\quad+ \frac\pi2 c_1c_2\int_\e^{L+\sqrt{L^2+\e^2}} t I_0(t)K_0(t) \, dt\\
                                &=
                                  \pi\alpha^2\frac{1+o(1)}{|\log\e|^2}\left(
                                  \int_0^{2L}t\left( \frac{|K_1(2L)|^2}{|I_1(2L)|^2}|I_0(t)|^2+2\frac{K_1(2L)}{I_1(2L)}I_0(t)K_0(t)+|K_0(t)|^2\right) \, dt+o(1)\right)\\
&=O\left(\frac1{|\log\e|^2}\right)= o\left(\int_{E_\e(L)\setminus
                                                                                                                                                                            s_\e} |\nabla W_\e|^2 \, dx\right)\quad  \text{as }\e\to0^+.
\end{align*}
Hence,  \eqref{eq_qjWj_o} is proved.
\end{proof}

For some fixed $L>0$, $\alpha\in\R\setminus\{0\}$, and $\beta>0$, we
consider, for every $\e\in(0,\frac L2)$, the quantity $\mathcal
E_{E_\e(L),\beta}^{(\e,0),\alpha}$, which, according to \eqref{eq:E_a},
is given by
\begin{equation*}
 \mathcal
E_{E_\e(L),\beta}^{(\e,0),\alpha}= \min\left\{
    \frac12\int_{E_\e(L) \setminus s_\e}\!\!\big(|\nabla  w|^2 +\beta w^2\big)\,
    dx:\hskip-10pt
    \begin{array}{ll}&\phantom{a}\\&w \in H^1(E_\e(L) \setminus s_\e),\\[5pt]  
    &\gamma^+_\e(w-\alpha)+\gamma^-_\e(w-\alpha)=0 \text{ on } s_\e
       \end{array}
\right\},
\end{equation*}
and the unique minimizer attaining it, i.e., according to the notation
introduced in \eqref{eq:eq_Va}, 
\begin{equation}\label{eq:defZe}
  Z_\e=Z_{\e,\alpha,\beta,L}:=V_{E_\e(L),\beta}^{(\e,0),\alpha}.
\end{equation}
Since $L^{(\e,0),\alpha}_{E_\e(L)}\equiv0$, by
\eqref{eq:eq_Va} we have that $Z_\e \in H^1(E_\e(L) \setminus s_\e)$
satisfies $Z_\e-\alpha\in \mathcal H_{(\e,0)}(E_\e(L))$ and 
\begin{equation}\label{eq_Zj}
  \int_{E_\e(L) \setminus s   _\e} \big(
  \nabla Z_\e\cdot \nabla w +\beta Z_\e w \big)
  \, dx =0 \quad\text{for all }w \in \mathcal H_{(\e,0)}(E_\e(L)).
\end{equation}
Equivalently, $Z_\e$ is the unique weak solution to the problem 
\begin{equation*}
\begin{cases}
-\Delta Z_\e+\beta Z_\e =0, &\text{ in } E_\e(L)\setminus s_\e,\\[2pt]
\gamma^+_\e(Z_\e-\alpha)+\gamma^{-}_\e(Z_\e-\alpha)=0, & \text{ on }  s_\e,\\[2pt]
\gamma^+_\e\big(\frac{\partial Z_\e}{\partial x_2}\big)+
\gamma^-_\e\big(\frac{\partial Z_\e}{\partial x_2}\big)=0, & \text{ on } s_\e,\\[2pt]
\frac{\partial Z_\e}{\partial \nu}=0, & \text{on } \partial E_\e(L).
\end{cases}
\end{equation*}
To compare the asymptotic behavior of $W_\e$ and $Z_\e$, we need some
preliminary lemmas.

\begin{lemma}\label{lemma_Zjo}
For every $b \in [1,+\infty)$
\begin{equation}\label{eq_Zjo}
  \norm{Z_\e}_{L^b(E_\e(L))}= o(\norm{\nabla
    Z_\e}_{L^2(E_\e(L)\setminus s_\e)})
  \quad \text{as } \e\to0^+. 
\end{equation}
\end{lemma}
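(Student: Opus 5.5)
We argue by contradiction with a blow-down/compactness argument in the spirit of Proposition \ref{p:negli}, first for $b=2$ and then for general $b$. Note that the family of domains $E_\e(L)$ converges to the fixed half-ellipse $E_0(L)=\{x_1>0,\ x_1^2+x_2^2<L^2\}$ (a half-disk), and that $E_\e(L)$ are uniformly Lipschitz. Set $s:=[0,L/4]\times\{0\}$; for $\e$ small $s_\e\subset s$. Since $E_\e(L)\setminus s$ is a Lipschitz-type slit domain uniformly in $\e$ (or, more simply, after rescaling $x_1\mapsto x_1\sqrt{L^2}/\sqrt{L^2+\e^2}$, which maps $E_\e(L)$ onto the fixed half-disk $E_0(L)$ with a bi-Lipschitz map converging to the identity in $C^1$), we obtain uniformly bounded extension operators $H^1(E_\e(L)\setminus s)\to H^1(R\setminus s)$ for a fixed rectangle $R$, exactly as in \eqref{eq:uniform-ext}.

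Step 1 ($b=2$). Suppose, along $\e_k\to0$, $\|Z_{\e_k}\|_{L^2}\geq C\|\nabla Z_{\e_k}\|_{L^2}$. Normalize $V_k:=Z_{\e_k}/\|Z_{\e_k}\|_{L^2(E_{\e_k}(L))}$, extended to $R$; then $\{V_k\}$ is bounded in $H^1(R\setminus s)$ and, up to subsequences, $V_k\rightharpoonup V$ weakly there. Each $V_k$ jumps only across $s_{\e_k}$, whose length tends to zero, so by the first part of Proposition \ref{prop_mosco} (applied with $K=s$, noting that test functions in $C^\infty_c$ away from $0$ eventually avoid $s_{\e_k}$) we get $V\in H^1(R)$, and by the compactness of $H^1(R\setminus s)\hookrightarrow L^2(R)$ (valid since $R\setminus s$ is a finite union of Lipschitz pieces) together with $|E_{\e_k}(L)\triangle E_0(L)|\to0$ and the Hölder argument of Proposition \ref{p:negli}, $\int_{E_0(L)}V^2=1$. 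Testing \eqref{eq_Zj} with $\varphi\in H^1(E_0(L))$ vanishing near $0$ (which lies in $\mathcal H_{(\e_k,0)}(E_{\e_k}(L))$ after restriction/extension, for $k$ large) and passing to the limit gives $\int_{E_0(L)}(\nabla V\cdot\nabla\varphi+\beta V\varphi)=0$; by null capacity of $\{0\}$ this holds for all $\varphi\in H^1(E_0(L))$, so $V=0$, a contradiction. Hence $\|Z_\e\|_{L^2}=o(\|\nabla Z_\e\|_{L^2})$.

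Step 2 (general $b$). For $b\le2$ this follows from Step 1 and Hölder on the bounded domain. For $b>2$, interpolate: by Step 1 and the uniform extension, $\|Z_\e\|_{H^1(E_\e(L)\setminus s)}\le C\|\nabla Z_\e\|_{L^2}$; by Gagliardo–Nirenberg in 2D applied to the extension (a function in $H^1(R\setminus s)$, handled piecewise on the two Lipschitz halves of $R$ above and below $s$), $\|Z_\e\|_{L^b}\le C\|Z_\e\|_{L^2}^{2/b}\|Z_\e\|_{H^1}^{1-2/b}$, which is $o(\|\nabla Z_\e\|_{L^2})$ since $2/b>0$. The main obstacle is justifying the uniform extension/compactness on the varying slit domains; the affine rescaling to the fixed half-disk is the cleanest way, at the cost of a harmless uniformly elliptic perturbation of the Dirichlet form that disappears in the limit.
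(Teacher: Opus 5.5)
Your argument is correct and is essentially the paper's proof: contradiction, normalization, uniformly bounded extensions, the Mosco-type step of Proposition~\ref{prop_mosco}, passage to the limit in \eqref{eq_Zj} on the half-disk, and the null capacity of $\{0\}$ forcing the limit to vanish. The one difference is that the paper runs this argument once with the $L^b$-normalization, using the compact embedding $H^1(E_1(L)\setminus s_1)\hookrightarrow L^b(E_1(L))$ (valid for every finite $b$ in dimension two), whereas you treat $b=2$ and then interpolate. In that interpolation step, apply Gagliardo--Nirenberg on the two quarter-ellipses $E_\e(L)\cap\{\pm x_2>0\}$ themselves (uniformly Lipschitz), or use an extension operator that is also bounded in $L^2$. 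An extension known only to be bounded $H^1\to H^1$ does not control the $L^2(R)$-norm of the extension by $\|Z_\e\|_{L^2(E_\e(L))}$, and without that control you get only $O(\|\nabla Z_\e\|_{L^2})$ instead of $o(\|\nabla Z_\e\|_{L^2})$.
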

\begin{proof}
  It is not restrictive to assume $b \ge 2$. We argue by
  contradiction, assuming that there exists a constant $c>0$ and a
  sequence $\e_j\to 0^+$ such that
\begin{equation*}
  \|\nabla Z_{\e_j}\|_{L^2(E_{\e_j}(L) \setminus s_{\e_j})} < c\,
  \|Z_{\e_j}\|_{L^b(E_{\e_j}(L))}
\end{equation*}
for every $j \in \mb{N}$.

One can easily prove that there exists a sequence of uniformly bounded
extension operators
\begin{equation*}
\Psi_j:H^1(E_{\e_j}(L) \setminus s_{\e_j}) \to H^1(E_1(L)\setminus
s_1),
\end{equation*}
satisfying, for every $w\in H^1(E_{\e_j}(L) \setminus s_{\e_j})$, 
\begin{equation*}
  \Psi_j(w)\Big|_{E_{\e_j}(L)} =w \quad \text{and}
  \quad \|\Psi_j (w)\|_{ H^1(E_1(L)\setminus
    s_1)}\le \tilde c\,\|w\|_{H^1(E_{\e_j}(L) \setminus s_{\e_j}) }
\end{equation*}
for some positive constant $\tilde c$ that does not depend on $j$.
Let us define 
\begin{equation*}
\tilde{Z}_{j}:=\frac{\Psi_j(Z_{\e_j})}{\|Z_{\e_j}\|_{L^b(E_{\e_j}(L))}}.
\end{equation*}
Then $\{\tilde{Z}_{j}\}_{j \in \mb{N}}$ is bounded in
$H^1(E_1(L)\setminus s_1)$ and
$\|\tilde{Z}_{j}\|_{L^b(E_{\e_j}(L))}=1$ for all $j$.  In view of
Proposition \ref{prop_mosco}, up to passing to a subsequence, there
exists $\tilde Z \in H^1(E_1(L))$ such that
$\tilde{Z}_{\e_j} \rightharpoonup \tilde Z$ weakly in
$H^1(E_1(L)\setminus s_1)$.

In view of \eqref{eq_Zj}, for every
$\varphi \in C^\infty(\overline{E_1(L)})$ such that $\varphi\equiv 0$
in a neighborhood of $0$, we have
\begin{equation}\label{eq_Zjtp}
  \int_{E_{\e_j}(L)} \big(
  \nabla \tilde Z_j\cdot \nabla \varphi +\beta \tilde Z_j \varphi \big)
  \, dx =0 \quad\text{for all $j$ sufficiently large}.
\end{equation}
Letting $D_L^+=\{(x_1,x_2)\in\R^2:x_1^2+x_2^2< L^2\text{ and }x_1>0\}$, we have
\begin{equation*}
  \int_{E_{\e_j}(L)} 
  \nabla \tilde Z_j\cdot \nabla \varphi \, dx=
    \int_{E_{\e_j}(L)\setminus D_L^+} 
  \nabla \tilde Z_j\cdot \nabla \varphi \, dx+   \int_{D_L^+} 
  \nabla \tilde Z_j\cdot \nabla \varphi \, dx
\end{equation*}
and 
\begin{equation*}
  \left|
    \int_{E_{\e_j}(L)\setminus D_L^+} 
    \nabla \tilde Z_j\cdot \nabla \varphi \, dx\right|
  \le \|\nabla \varphi\|_{L^\infty(E_1(L))}
  \|\nabla \tilde Z_j\|_{L^2(E_1(L) \setminus s_1)} |E_{\e_j}(L)\setminus D_L^+|^{\frac{1}{2}},
\end{equation*}
where $|\cdot|$ denotes the Lebesgue measure in $\R^2$. Since,
$\lim_{j\to\infty}|E_{\e_j}(L)\setminus D_L^+|=0$, we conclude that
\begin{equation*}
\lim_{j\to\infty}  \int_{E_{\e_j}(L)} 
  \nabla \tilde Z_j\cdot \nabla \varphi \, dx=  \int_{D_L^+} 
  \nabla \tilde Z\cdot \nabla \varphi \, dx.
\end{equation*}
Similarly, 
\begin{equation*}
\lim_{j\to\infty}  \int_{E_{\e_j}(L)} 
  \tilde Z_j \varphi \, dx=  \int_{D_L^+} 
  \tilde Z \varphi \, dx.
\end{equation*}
Therefore, we can pass to the limit in \eqref{eq_Zjtp}, thus obtaining
\begin{equation*}
  \int_{D_L^+} \big(
  \nabla \tilde Z\cdot \nabla \varphi +\beta \tilde Z \varphi \big)
  \, dx =0 
\end{equation*}
for every $\varphi \in C^\infty(\overline{E_1(L)})$ such that
$\varphi\equiv 0$ in a neighborhood of $0$. Since a singleton has null
capacity in $\R^2$, it can be easily proved that the set of smooth
functions vanishing in the neighborhood of a point are dense in $H^1$, so the
above identity holds true for every $\varphi\in H^1(E_1(L))$. It
follows that
\begin{equation}\label{eq:restr-null}
  \tilde Z\Big|_{D_L^+}\equiv0.
\end{equation}
On the other hand
\begin{equation*}
1=\|\tilde{Z}_{j}\|_{L^b(E_{\e_j}(L))}^b=\int_{D_L^+}
|\tilde{Z}_{j}|^b \, dx +
\int_{E_{\e_j}(L)\setminus D_L^+}  |\tilde{Z}_{j}|^b \, dx,
\end{equation*}
and, by compactness of the Sobolev embedding 
$H^1(E_1(L)\setminus s_1)\hookrightarrow L^b(E_1(L))$ and \eqref{eq:restr-null},
\begin{equation*}
  \lim_{j \to \infty}\int_{E_{\e_j}(L)\setminus D_L^+}  |\tilde{Z}_{j}|^b \, dx= 0 \quad 
\text{ and } \quad 
\lim_{j \to \infty}\int_{D_L^+}  |\tilde{Z}_{j}|^b \, dx= \int_{D_L^+} |\tilde{Z}|^b \, dx =0,
\end{equation*}
thus giving rise to a contradiction.
\end{proof}

\begin{lemma}\label{lemma_q}
Let $\rho_\e$ be as in \eqref{def_q}. Then, for every $\gamma \in (1,2 )$,
\begin{equation}\label{ineq_q}
  \|\rho_\e\|_{L^\gamma(E_\e(L))}= O(1) \quad    \text{and}
  \quad \|\rho^{-1}_\e\|_{L^\gamma(E_\e(L))} = O(1)
\end{equation}
as $\e\to0^+$.
\end{lemma}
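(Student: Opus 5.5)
The plan is to compute both norms directly in elliptic coordinates, using the change of variables $F_\e$, whose Jacobian determinant is $\e^2(\cosh^2\xi-\cos^2\eta)$. Since $\rho_\e\circ F_\e=\rho=e^{2\xi}/(\cosh^2\xi-\cos^2\eta)$, we get
\begin{equation*}
\int_{E_\e(L)}\rho_\e^\gamma\,dx=\e^2\int_0^{\xi_\e}\!\!\int_{-\pi/2}^{\pi/2}\frac{e^{2\gamma\xi}}{(\cosh^2\xi-\cos^2\eta)^{\gamma-1}}\,d\eta\,d\xi,
\end{equation*}
and
\begin{equation*}
\int_{E_\e(L)}\rho_\e^{-\gamma}\,dx=\e^2\int_0^{\xi_\e}\!\!\int_{-\pi/2}^{\pi/2}e^{-2\gamma\xi}(\cosh^2\xi-\cos^2\eta)^{\gamma+1}\,d\eta\,d\xi.
\end{equation*}
The second integral is elementary. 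Since $\cosh^2\xi-\cos^2\eta\le\cosh^2\xi\le e^{2\xi}$, the integrand is at most $e^{2\xi}$. The integral is therefore bounded by $\pi\e^2 e^{2\xi_\e}/2$. By \eqref{def_xi_j}, $\e e^{\xi_\e}=L+\sqrt{L^2+\e^2}\to 2L$, so this term is $O(1)$. This holds for every $\gamma>0$.

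For the first integral, write $\cosh^2\xi-\cos^2\eta=\sinh^2\xi+\sin^2\eta$. I split the $\xi$-range into $[0,1]$ and $[1,\xi_\e)$.

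On $[1,\xi_\e)$ we have $\sinh^2\xi+\sin^2\eta\ge\sinh^2\xi\ge c\,e^{2\xi}$. The integrand is then bounded by $C e^{2\gamma\xi-2(\gamma-1)\xi}=Ce^{2\xi}$. Hence this part contributes at most $C\e^2e^{2\xi_\e}=O(1)$, exactly as before.

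On $[0,1]$ the factor $e^{2\gamma\xi}$ is bounded, and this region is the main point. The denominator vanishes at $(\xi,\eta)=(0,0)$, which is the preimage of the tip $(\e,0)$ of the crack. Near that point $\sinh^2\xi+\sin^2\eta\ge c(\xi^2+\eta^2)$. The singularity $(\xi^2+\eta^2)^{-(\gamma-1)}$ is integrable in two dimensions exactly when $2(\gamma-1)<2$, i.e. $\gamma<2$, which is our assumption. So this piece is bounded by a constant depending only on $\gamma$. Multiplying by $\e^2$, it is in fact $O(\e^2)$. Both contributions are therefore $O(1)$, which gives \eqref{ineq_q}.

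The only delicate step is this local integrability at the corner point $(0,0)$ in the $(\xi,\eta)$ variables. It is also the reason the exponent is restricted to $\gamma\in(1,2)$. The remaining work is bookkeeping, using the asymptotics $\e e^{\xi_\e}=2L+o(1)$ already recorded in the paper to keep the bound on the large-$\xi$ part uniform in $\e$.
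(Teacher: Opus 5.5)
Your proposal is correct and follows the paper's proof essentially step for step. Both arguments use the elliptic-coordinate change of variables with Jacobian $\e^2(\cosh^2\xi-\cos^2\eta)$ and split the $\xi$-range at $\xi=1$, with the lower bound $\cosh^2\xi-\cos^2\eta\ge\sinh^2\xi\gtrsim e^{2\xi}$ for $\xi\ge1$, local integrability of $(\xi^2+\eta^2)^{1-\gamma}$ near $(0,0)$ for $\gamma<2$, and the crude bound $\cosh^2\xi-\cos^2\eta\le e^{2\xi}$ for $\rho_\e^{-1}$; uniformity comes from $\e e^{\xi_\e}=L+\sqrt{L^2+\e^2}$. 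Your identity $\cosh^2\xi-\cos^2\eta=\sinh^2\xi+\sin^2\eta$ is simply a cleaner way to obtain the paper's Taylor-expansion lower bound near the crack tip.
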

\begin{proof}
Let $\gamma \in (1,2)$. Passing to elliptic coordinates 
\begin{align}
\notag\|\rho_\e&\|_{L^\gamma(E_\e(L))}^{\gamma}= \int_{E_\e(L)} \rho_\e^{\gamma} \, dx
=\e^2 \int_0^{\xi_\e}\int_{-\pi/2}^{\pi/2} \frac{e^{2 \gamma
                                            \xi}}{(\cosh^2\xi-\cos^2\eta )^{\gamma-1}} \,d \eta \,d \xi\\
\label{eq:nqO} & =\e^2 \int_0^{1}\int_{-\pi/2}^{\pi/2} \frac{e^{2 \gamma
                                            \xi}}{(\cosh^2\xi-\cos^2\eta )^{\gamma-1}} \,d \eta \,d\xi +\e^2 \int_1^{\xi_\e}\int_{-\pi/2}^{\pi/2} \frac{e^{2 \gamma
                                            \xi}}{(\cosh^2\xi-\cos^2\eta )^{\gamma-1}} \,d \eta \,d \xi.
\end{align}
A  Taylor expansion yields
\begin{equation*}
  \cosh^2\xi = 1+ \xi^2+o(\xi^2) \text{ as }\xi\to0,\quad
  \cos^2 \eta= 1-\eta^2+o(\eta^2) \text{ as }\eta\to0,
\end{equation*}
hence
\begin{equation*}
  \cosh^2\xi-\cos^2\eta\geq \frac12(\xi^2+\eta^2)\quad\text{in a
    sufficiently small neighborhood of $(0,0)$}.
\end{equation*}
Hence the function
$(\xi,\eta)\mapsto \frac{e^{2 \gamma \xi}}{(\cosh^2\xi-\cos^2\eta
  )^{\gamma-1}}$ is integrable in
$(0,1)\times (-\frac\pi2, \frac\pi2)$ provided $\gamma<2$, thus
implying that the first term at the right hand side of \eqref{eq:nqO}
is bounded uniformly with respect to $\e$.

Moreover, $\cosh^2\xi-\cos^2\eta\geq \cosh^2\xi-1\geq \frac18
e^{2\xi}$ for every $\xi\geq1$ and $\eta\in  (-\frac\pi2,
\frac\pi2)$, so that
\begin{equation*}
  \e^2 \int_1^{\xi_\e}\int_{-\pi/2}^{\pi/2} \frac{e^{2 \gamma   \xi}}
  {(\cosh^2\xi-\cos^2\eta)^{\gamma-1}} \,d \eta \,d\xi
  \leq \pi 8^{\gamma-1}\e^2\int_1^{\xi_\e}e^{2\xi}\,d\xi=\frac\pi2
  8^{\gamma-1}\e^2
  \Big(\Big(\tfrac{L}{\e}+\sqrt{1+\tfrac{L^2}{\e^2}}\Big)^2-e^2\Big)
\end{equation*}
and also the second term at the right hand side of \eqref{eq:nqO} is
bounded uniformly with respect to $\e$. The first estimate in
\eqref{ineq_q} is thereby proved.

On the other
hand,
$\cosh^2\xi-\cos^2\eta\leq \cosh^2\xi\leq e^{2\xi}$ for
every $\xi\geq0$ and $\eta\in (-\frac\pi2, \frac\pi2)$, so that,
in view of \eqref{def_xi_j},
\begin{align*}
\|\rho_\e^{-1}\|_{L^\gamma(E_\e(L))}^{\gamma}&= \int_{E_\e(L)} \rho_\e^{-\gamma} \, dx
=\e^2\int_0^{\xi_\e}\int_{-\pi/2}^{\pi/2}
\frac{(\cosh^2\xi-\cos^2\eta)^{1+\gamma}}{e^{2\gamma\xi}}\,
d \eta \,d \xi \\
&\le
\pi \e^2 \int_0^{\xi_\e}e^{2\xi}\,d\xi= O(1) \quad  \text{ as } j \to \infty.
\end{align*}
The second estimate in \eqref{ineq_q} is thereby proved.
\end{proof}

We are now able to make an asymptotic comparison between the energies
of $W_\e$  and $Z_\e$.
\begin{proposition}\label{prop_WZ}
  Let $L>0$, $\alpha\in\R\setminus\{0\}$, and $\beta>0$. For every
  $\e\in(0,L/2)$, let $W_\e=W_{\e,\alpha,L}$ be as in
  \eqref{eq:defWe}--\eqref{eq_Wj} and $Z_\e=Z_{\e,\alpha,\beta,L}$ be
  as in \eqref{eq:defZe}--\eqref{eq_Zj}. Then 
\begin{equation}\label{eq_WZ}
  \|\nabla Z_\e\|^2_{L^2(E_\e(L) \setminus s_\e)}=
  \|\nabla W_\e\|^2_{L^2(E_\e(L) \setminus s_\e)}(1+o(1))
\quad  \text{as } \e\to0^+.
\end{equation}
\end{proposition}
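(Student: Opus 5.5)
The plan is to compare the two minimization problems directly. $W_\e$ and $Z_\e$ minimize the quadratic functionals
\begin{equation*}
G_\rho(w):=\int_{E_\e(L)\setminus s_\e}\big(|\nabla w|^2+\rho_\e w^2\big)\,dx,
\qquad
G_\beta(w):=\int_{E_\e(L)\setminus s_\e}\big(|\nabla w|^2+\beta w^2\big)\,dx
\end{equation*}
over the \emph{same} affine admissible set $\{w\in H^1(E_\e(L)\setminus s_\e): w-\alpha\in \mathcal H_{(\e,0)}(E_\e(L))\}$. So each minimizer is an admissible competitor for the other functional. This gives two one-sided inequalities, and in each one the zero-order term must be shown negligible against the Dirichlet energy. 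For this I will use Lemma \ref{lemma_Zjo}, Lemma \ref{lemma_q}, and \eqref{eq_qjWj_o} from Proposition \ref{prop_Wj}.

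\emph{Upper bound for $\|\nabla W_\e\|^2$.} Testing $G_\rho$ with $Z_\e$ gives
\begin{equation*}
\|\nabla W_\e\|_{L^2}^2\le G_\rho(W_\e)\le G_\rho(Z_\e)=\|\nabla Z_\e\|^2_{L^2}+\int_{E_\e(L)}\rho_\e Z_\e^2\,dx .
\end{equation*}
Fix $\gamma\in(1,2)$ and let $\gamma'$ be its conjugate exponent. By Hölder's inequality,
\begin{equation*}
\int_{E_\e(L)}\rho_\e Z_\e^2\,dx\le \|\rho_\e\|_{L^\gamma(E_\e(L))}\,\|Z_\e\|^2_{L^{2\gamma'}(E_\e(L))}.
\end{equation*}
The first factor is $O(1)$ by Lemma \ref{lemma_q}. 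The second is $o(\|\nabla Z_\e\|^2_{L^2})$ by Lemma \ref{lemma_Zjo} with $b=2\gamma'$. Hence $\|\nabla W_\e\|^2\le \|\nabla Z_\e\|^2(1+o(1))$.

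\emph{Upper bound for $\|\nabla Z_\e\|^2$.} Testing $G_\beta$ with $W_\e$ gives $\|\nabla Z_\e\|^2\le G_\beta(Z_\e)\le \|\nabla W_\e\|^2+\beta\|W_\e\|^2_{L^2}$. It therefore suffices to prove $\|W_\e\|_{L^2}=o(\|\nabla W_\e\|_{L^2})$, and this is the step I expect to be the main obstacle, because only the weighted quantity $\int\rho_\e W_\e^2$ is known to be small. I will interpolate with the weight as follows:
\begin{equation*}
\int W_\e^2=\int (\rho_\e^{1/2}|W_\e|)(\rho_\e^{-1/2}|W_\e|)\le\Big(\int\rho_\e W_\e^2\Big)^{1/2}\Big(\int\rho_\e^{-1}W_\e^2\Big)^{1/2}.
\end{equation*}
For the last factor, Hölder's inequality and Lemma \ref{lemma_q} give
\begin{equation*}
\int\rho_\e^{-1}W_\e^2\le\|\rho_\e^{-1}\|_{L^\gamma}\|W_\e\|^2_{L^{2\gamma'}}\le C\|W_\e\|^2_{H^1(E_\e(L)\setminus s_\e)}.
\end{equation*}
The last inequality requires a Sobolev constant uniform in $\e$. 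I will obtain it from the uniformly bounded extension operators into $H^1(E_1(L)\setminus s_1)$ used in the proof of Lemma \ref{lemma_Zjo}, composed with the fixed Sobolev embedding on that Lipschitz cracked domain.

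Combining this with \eqref{eq_qjWj_o}, I get
\begin{equation*}
\|W_\e\|_{L^2}^2\le o(1)\,\|\nabla W_\e\|_{L^2}\big(\|\nabla W_\e\|_{L^2}+\|W_\e\|_{L^2}\big).
\end{equation*}
Writing $t=\|W_\e\|_{L^2}/\|\nabla W_\e\|_{L^2}$, this reads $t^2\le o(1)(1+t)$, which forces $t=o(1)$. Hence $\|\nabla Z_\e\|^2\le\|\nabla W_\e\|^2(1+o(1))$.

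The two inequalities together give \eqref{eq_WZ}. The gradients do not vanish: $\|\nabla W_\e\|^2\sim\pi\alpha^2/|\log\e|>0$ by \eqref{eq_E_Wj}, and so all the ratios above are well defined for small $\e$.
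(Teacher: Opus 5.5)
Your argument is correct, but it follows a different route from the paper's.

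\textbf{The paper's route.} The paper never invokes minimality. It tests \eqref{eq_Zj} with $Z_\e-W_\e$ and \eqref{eq_Wj} with $W_\e-Z_\e$; both are admissible because the two functions satisfy the same jump condition. Subtracting gives the two-sided bound
\[
\bigl|\|\nabla W_\e\|^2_{L^2}-\|\nabla Z_\e\|^2_{L^2}\bigr|\le\int_{E_\e(L)}\big(\beta Z_\e^2+\rho_\e W_\e^2+(\beta+\rho_\e)|W_\e||Z_\e|\big)\,dx .
\]
The cross terms are then split by three-factor H\"older inequalities into $(\int\rho_\e W_\e^2)^{1/2}$, a power of $\|\rho_\e^{\pm1}\|_{L^{3/2}}$, and $\|Z_\e\|_{L^6}$. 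The point of this design is that $W_\e$ only appears through the weighted quantity controlled by \eqref{eq_qjWj_o}. Every unweighted Lebesgue norm falls on $Z_\e$, where Lemma \ref{lemma_Zjo} applies, so no Sobolev bound for $W_\e$ is needed.

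\textbf{Your route.} You compare the two minimization problems over their common admissible class. This gives the two one-sided inequalities with no cross terms, which is more elementary. In exchange, your second inequality needs the unweighted bound $\|W_\e\|_{L^2}=o(\|\nabla W_\e\|_{L^2})$. Your weighted interpolation, the uniform Sobolev constant obtained from the extension operators in the proof of Lemma \ref{lemma_Zjo}, and the $t^2\le o(1)(1+t)$ step handle this correctly. Both proofs rest on the same three inputs: \eqref{eq_qjWj_o}, Lemma \ref{lemma_Zjo}, and Lemma \ref{lemma_q}.

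\textbf{A shortcut.} The step you flag as the main obstacle is actually immediate. For $\xi\ge0$,
$\rho(\xi,\eta)=e^{2\xi}/(\cosh^2\xi-\cos^2\eta)\ge e^{2\xi}/\cosh^2\xi\ge1$, so $\rho_\e\ge1$ on $E_\e(L)$. Hence $\|W_\e\|^2_{L^2}\le\int_{E_\e(L)}\rho_\e W_\e^2\,dx=o(\|\nabla W_\e\|^2_{L^2})$ directly by \eqref{eq_qjWj_o}. With this, your second inequality needs neither the bound on $\rho_\e^{-1}$ nor a uniform Sobolev constant. The same pointwise bound would also let the paper replace $\int Z_\e^2/\rho_\e$ by $\|Z_\e\|^2_{L^2}$.
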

\begin{proof}
Testing \eqref{eq_Zj} with $Z_\e- W_\e$  we obtain 
\begin{equation*}
  \int_{E_\e(L) \setminus s_\e} |\nabla Z_\e|^2dx =
  \int_{E_\e(L) \setminus s_\e} \big(\nabla W_\e\cdot \nabla  Z_\e+\beta
  Z_\e(W_\e-Z_\e)\big)
  \, dx.
\end{equation*}
Similarly, testing \eqref{eq_Wj} with  $W_\e-Z_\e$ we find
\begin{equation*}
\int_{E_\e(L) \setminus s_\e} |\nabla W_\e|^2\, dx = \int_{E_\e(L)
  \setminus s_\e}\big(
\nabla W_\e\cdot \nabla Z_\e+ \rho_\e W_\e(Z_\e-W_\e) \big)\, dx.
\end{equation*}
Hence, taking the difference of the above identities,
\begin{equation}\label{eq:diff}
\left|\int_{E_\e(L) \setminus s_\e} |\nabla W_\e|^2dx-\int_{E_\e(L)
    \setminus s_\e}
  |\nabla Z_\e|^2\, dx\right|\\
\le \int_{E_\e(L)} \big(
\beta Z_\e^2+ \rho_\e W_\e^2 +(\beta + \rho_\e) |W_\e| |Z_\e|\big) \, dx.
\end{equation}
Furthermore, by the H\"older inequality,
\begin{align}\notag
  \int_{E_\e(L)} |W_\e| |Z_\e| \, dx 
  &\le  \left(\int_{E_\e(L)} \rho_\e|W_\e|^2\,dx\right)^{\!\!1/2}
    \left(\int_{E_\e(L)} \frac{|Z_\e|^2}{\rho_\e}\,dx\right)^{\!\!1/2} \\
 \label{eq:WeZe1} &\le \left(\int_{E_\e(L)} \rho_\e|W_\e|^2\,dx\right)^{\!\!1/2}
    \left(\int_{E_\e(L)} \rho_\e^{-\frac{3}{2}}\,dx\right)^{\!\!1/3}
    \left(\int_{E_\e(L)} |Z_\e|^6\,dx\right)^{\!\!1/6}
\end{align}
and 
\begin{align}\notag
\int_{E_\e(L)}  \rho_\e|W_\e| |Z_\e| \, dx 
&\le  \left(\int_{E_\e(L)} \rho_\e|W_\e|^2\,dx\right)^{\!\!1/2}
\left(\int_{E_\e(L)} \rho_\e|Z_\e|^2\,dx\right)^{\!\!1/2} \\
\label{eq:WeZe2}&\le \left(\int_{E_\e(L)} \rho_\e|W_\e|^2\,dx\right)^{\!\!1/2}
\left(\int_{E_\e(L)} \rho_\e^{\frac{3}{2}}\,dx\right)^{\!\!1/3}
\left(\int_{E_\e(L)} |Z_\e|^6\,dx\right)^{\!\!1/6}.
\end{align}
Combining \eqref{eq:diff}, \eqref{eq:WeZe1}, and \eqref{eq:WeZe2}, in view
of \eqref{eq_qjWj_o}, \eqref{eq_Zjo} with $b=2$ and $b=6$, and
\eqref{ineq_q}  with $\gamma=3/2$, we conclude that
\begin{equation*}
  \|\nabla W_\e\|_{L^2(E_\e(L) \setminus s_\e)}^2-
  \|\nabla Z_\e\|_{L^2(E_\e(L) \setminus s_\e)}^2=
  o\left( \|\nabla W_\e\|_{L^2(E_\e(L) \setminus s_\e)}^2\right)+
  o\left( \|\nabla Z_\e\|_{L^2(E_\e(L) \setminus s_\e)}^2\right)
\end{equation*}
as $\e\to0^+$, thus proving \eqref{eq_WZ}.
\end{proof}

Combining Propositions \ref{prop_WZ} and \ref{prop_Wj} we obtain the
following asymptotic expansion for $\mc E_{E_\e(L),\beta}^{(\e,0),\alpha}$.

\begin{corollary}\label{cor:E_ellipse}
  For every $L>0$, $\alpha\in\R\setminus\{0\}$, and $\beta>0$, we have
\begin{equation*}
 \mathcal
 E_{E_\e(L),\beta}^{(\e,0),\alpha}=
\frac{\pi\alpha^2}{2|\log \e|}+o\left(\frac{1}{|\log\e|}\right) \quad
\text{as } \e \to 0^+.
\end{equation*} 
\end{corollary}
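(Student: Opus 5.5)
The plan is to express $\mathcal E_{E_\e(L),\beta}^{(\e,0),\alpha}$ through the energy of $Z_\e$ and then transfer the explicit asymptotics of $W_\e$ via Propositions \ref{prop_WZ} and \ref{prop_Wj}.

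First, since $L^{(\e,0),\alpha}_{E_\e(L)}\equiv0$, the definition gives
\begin{equation*}
\mathcal E_{E_\e(L),\beta}^{(\e,0),\alpha}=\frac12\int_{E_\e(L)\setminus s_\e}\big(|\nabla Z_\e|^2+\beta Z_\e^2\big)\,dx .
\end{equation*}
By Lemma \ref{lemma_Zjo} with $b=2$, we have $\|Z_\e\|_{L^2(E_\e(L))}^2=o(\|\nabla Z_\e\|^2_{L^2(E_\e(L)\setminus s_\e)})$. Hence
\begin{equation*}
\mathcal E_{E_\e(L),\beta}^{(\e,0),\alpha}=\tfrac12\|\nabla Z_\e\|^2_{L^2(E_\e(L)\setminus s_\e)}(1+o(1)).
\end{equation*}

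Next, Proposition \ref{prop_WZ} gives $\|\nabla Z_\e\|^2=\|\nabla W_\e\|^2(1+o(1))$. By \eqref{eq_E_Wj}, $\|\nabla W_\e\|^2=\frac{\pi\alpha^2}{|\log\e|}(1+o(1))$, which is legitimate because $\alpha\ne0$. Multiplying these expansions yields
\begin{equation*}
\mathcal E=\frac{\pi\alpha^2}{2|\log\e|}(1+o(1)),
\end{equation*}
which is the claim.

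There is no real obstacle here. All the analysis was carried out in Proposition \ref{prop_WZ}, in particular the comparison between the singular weight $\rho_\e$ and the constant $\beta$. The only point requiring care is to use the relative smallness of the $L^2$ term from Lemma \ref{lemma_Zjo} and the nondegeneracy $\alpha\neq0$, so that the relative error terms $o(1)$ translate into absolute errors $o(1/|\log\e|)$.
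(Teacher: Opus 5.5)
Your proposal is correct and follows essentially the same route as the paper. You write $\mathcal E_{E_\e(L),\beta}^{(\e,0),\alpha}=\frac12\|\nabla Z_\e\|^2+\frac\beta2\|Z_\e\|^2$, discard the $L^2$ term via Lemma \ref{lemma_Zjo} with $b=2$, and transfer the asymptotics \eqref{eq_E_Wj} of $W_\e$ to $Z_\e$ through Proposition \ref{prop_WZ}; the paper does the same, only in a slightly different order.
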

\begin{proof}
  From \eqref{eq_WZ} and \eqref{eq_E_Wj} it follows that
  \begin{equation*}
    \|\nabla Z_\e\|_{L^2(E_\e(L) \setminus s_\e)}^2=\frac{\pi\alpha^2}{|\log \e|}+o\left(\frac{1}{|\log\e|}\right)
  \end{equation*}
as $\e\to0^+$,  so that, in view of \eqref{eq_Zjo} with $b=2$, we conclude that
  \begin{equation*}
    \mathcal
 E_{E_\e(L),\beta}^{(\e,0),\alpha}=\frac12\|\nabla Z_\e\|_{L^2(E_\e(L)
   \setminus s_\e)}^2+\frac{\beta}2 \|Z_\e\|_{L^2(E_\e(L)}^2=\frac{\pi\alpha^2}{2|\log \e|}+o\left(\frac{1}{|\log\e|}\right),
  \end{equation*}
as $\e\to0^+$.  
\end{proof}

\subsection{Proof of Proposition \ref{p:asyE}, Proposition
  \ref{p:asy-r}, and Theorem \ref{t:toward-a-fixed-point}}
We are now ready to prove Proposition \ref{p:asyE}, and, as a
consequence, Proposition \ref{p:asy-r}.
\begin{proof}[Proof of Proposition \ref{p:asyE}]
  Let $w\in E(\lambda_n(\Omega_s,\tilde p))$ with
  $\int_{\Omega_s}\tilde pw^2\,dx=1$ and $\kappa=w(0)$. Let
  $\e_\ell=\Phi_1(a_\ell)$ and $\tau_\ell=\Phi_2(a_\ell)$, so that, by
  \eqref{eq:Omega-s},
  $\mathop{\rm dist}(\Phi(a_\ell),\partial\Omega_s)=\e_\ell$ provided
  $\ell$ is sufficiently large. By Proposition
  \ref{p:confronto-costante} we have
  \begin{equation}\label{eq:pr-est}
\mathcal E^{\Phi(a_\ell),w}_{\Omega_s,\tilde p}=
\mathcal E^{\Phi(a_\ell),\kappa}_{\Omega_s,\tilde p}+o\left(\frac{1}{|\log\e_\ell|}\right)
    \quad\text{as }\ell\to\infty.
  \end{equation}
  If $w(0)=\kappa=0$, then
  $\mathcal E^{\Phi(a_\ell),\kappa}_{\Omega_s,\tilde p}=0$ and
  expansion \eqref{eq:asyE1} directly follows from
  \eqref{eq:pr-est}. In the case $\kappa\neq0$, let us fix
  $\beta_1,\beta_2>0$ such that
  \begin{equation*}
    \beta_1\leq \tilde p\leq \beta_2\quad\text{a.e. in }\Omega_s.
  \end{equation*}
  Hence, in view of Remark \ref{rem:comparison-weight},
  \begin{equation}\label{eq:comp1}
    \mathcal E^{\Phi(a_\ell),\kappa}_{\Omega_s,\beta_1}\leq
    \mathcal E^{\Phi(a_\ell),\kappa}_{\Omega_s,\tilde p}\leq
    \mathcal E^{\Phi(a_\ell),\kappa}_{\Omega_s,\beta_2}.
  \end{equation}
  It can be easily proved that there exist $L_1,L_2>0$  such that, for
  all $\ell$ sufficiently large,
  \begin{equation*}
    E_{\e_\ell}(L_1)+\tau_\ell\mathbf{e_2}\subset\Omega_s\subset  E_{\e_\ell}(L_2)+\tau_\ell\mathbf{e_2},
  \end{equation*}
where $\mathbf{e_2}=(0,1)$. Hence, from \eqref{eq:comp1} we deduce
that 
 \begin{align*}
 \mathcal E^{(\e_\ell,0),\kappa}_{E_{\e_\ell}(L_1),\beta_1}
   =     \mathcal
   E^{\Phi(a_\ell),\kappa}_{E_{\e_\ell}(L_1)+\tau_\ell\mathbf{e_2},\beta_1}\leq
   \mathcal E^{\Phi(a_\ell),\kappa}_{\Omega_s,\beta_1}&\leq
   \mathcal E^{\Phi(a_\ell),\kappa}_{\Omega_s,\tilde p}\\
 &  \leq
    \mathcal E^{\Phi(a_\ell),\kappa}_{\Omega_s,\beta_2}\leq \mathcal
    E^{\Phi(a_\ell),\kappa}_{E_{\e_\ell}(L_2)+\tau_\ell\mathbf{e_2},\beta_2}
    = \mathcal E^{(\e_\ell,0),\kappa}_{E_{\e_\ell}(L_2),\beta_2},
 \end{align*}
  provided $\ell$ is sufficiently large. Expansion \eqref{eq:asyE1} then follows from
  \eqref{eq:pr-est} and
  Corollary \ref{cor:E_ellipse}.

  Moreover, Proposition \ref{p:negli} and \eqref{eq:Vato0}
  yield
  \begin{equation*}
    \big\|V^{\Phi(a_\ell),w}_{\Omega_s,\tilde
      p}\big\|_{L^2(\Omega_s)}^2=
    o\big\|\nabla
V^{\Phi(a_\ell),w}_{\Omega_s,\tilde
      p}\big\|^2_{L^2(\Omega_s\setminus
      S_{\Phi(a_\ell)}^{\Omega_s})}=o\left(\frac{1}{|\log
        \Phi_1(a_\ell)|}\right)
    \quad\text{as }\ell\to\infty,
  \end{equation*}
  thus proving \eqref{eq:asyE2}.
\end{proof}
\begin{proof}[Proof of Proposition \ref{p:asy-r}]
  By the bilinearity of the form
  $ r_{\Omega_s,\tilde p,n}^{\Phi(a_\ell)}$ and Proposition
  \ref{p:relation_E_r} we have
  \begin{align*}
    &r_{\Omega_s,\tilde p,n}^{\Phi(a_\ell)} ( w_{n+i-1}, w_{n+j-1})\\
    &=
    \frac12\left(
r_{\Omega_s,\tilde p,n}^{\Phi(a_\ell)}
\Big(\tfrac{w_{n+i-1}+w_{n+j-1}}{\sqrt2},
\tfrac{w_{n+i-1}+w_{n+j-1}}{\sqrt2}\Big)-
r_{\Omega_s,\tilde p,n}^{\Phi(a_\ell)}
\Big(\tfrac{w_{n+i-1}-w_{n+j-1}}{\sqrt2},
\tfrac{w_{n+i-1}-w_{n+j-1}}{\sqrt2}\Big)
      \right)\\
    &=\mathcal E^{\Phi(a_\ell), (w_{n+i-1}+w_{n+j-1})/\sqrt2}_{\Omega_s,\tilde p}-
\mathcal E^{\Phi(a_\ell),
      (w_{n+i-1}-w_{n+j-1})/\sqrt2}_{\Omega_s,\tilde p}\\
    &\quad +
O\left( \left\|V^{\Phi(a_\ell), (w_{n+i-1}+w_{n+j-1})/\sqrt2}_{\Omega_s,\tilde
      p}\right\|_{L^2(\Omega_s)}^2\right)+
O\left( \left\|V^{\Phi(a_\ell), (w_{n+i-1}-w_{n+j-1})/\sqrt2}_{\Omega_s,\tilde
    p}\right\|_{L^2(\Omega_s)}^2\right)
  \end{align*}
  as $\ell\to\infty$.
  Hence, from \eqref{eq:asyE1} and \eqref{eq:asyE2} we conclude 
  \begin{align*}
    r_{\Omega_s,\tilde p,n}^{\Phi(a_\ell)} ( w_{n+i-1}, w_{n+j-1})&=
    \frac{\pi }{2|\log \Phi_1(a_\ell)|}\, \left(
    \left(\tfrac{w_{n+i-1}(0) +w_{n+j-1}(0)}{\sqrt2}\right)^2-
    \left(\tfrac{w_{n+i-1}(0) -w_{n+j-1}(0)}{\sqrt2}\right)^2\right)\\
                                                                  &\quad    +o\left(\frac{1}{|\log \Phi_1(a_\ell)|}\right)\\
    &=
    \frac{\pi }{|\log \Phi_1(a_\ell)|}\, w_{n+i-1}(0) w_{n+j-1}(0)
    +o\left(\frac{1}{|\log \Phi_1(a_\ell)|}\right)
\end{align*}
as $\ell\to\infty$, thus completing the proof.
\end{proof}

We are finally in a position to prove Theorem
\ref{t:toward-a-fixed-point}.
\begin{proof}[Proof of Theorem \ref{t:toward-a-fixed-point}]
  Let $\{a_\ell\}\subset\Omega$ be a sequence of points such that $a_\ell\to
  a_0\in\partial\Omega$.
  Let $\Omega_s$ be as in \eqref{eq:Omega-s} and 
  $\Phi=(\Phi_1,\Phi_2)$ be the biholomorphic map from $\Omega$
  to $\Omega_s$ provided by Theorem \ref{theor_conf}, chosen
  as in \eqref{eq:phi-of-a0}. By 
\eqref{eq:first-est-aj}, Proposition
  \ref{p:asy-r}, and \eqref{eq:asyE2}, for every $j=1,\dots,m$  we have 
\begin{equation*}
  \lambda_{n+j-1}^{a_\ell}(\Omega,p) -\lambda_n (\Omega,p)=
  \frac{\pi}{|\log \Phi_1(a_\ell)|}\zeta_j+o\left(\frac{1}{|\log
      \Phi_1(a_\ell)|}\right)\quad\text{as }\ell\to\infty,
\end{equation*}
where $\{\zeta_j\}_{j=1,\dots,m}$ are the eigenvalues (in ascending order) of
the quadratic form
\begin{equation*}
  \mathcal Q: E(\lambda_n(\Omega_s,\tilde p))\times
  E(\lambda_n(\Omega_s,\tilde p))\to\R,\quad \mathcal Q(w,v)=w(0)v(0).
\end{equation*}
In view of \eqref{eq:w-u} and \eqref{eq:phi-of-a0},
$\{\zeta_j\}_{j=1,\dots,m}$ coincide with the eigenvalues of the
$m\times m$ real symmetric matrix
\begin{equation}\label{eq:matrix}
  \Big(
    u_{n+i-1}(a_0)\cdot u_{n+j-1}(a_0)\Big)_{1\leq
    i,j\leq m}.
\end{equation}
If $u_{n+j-1}(a_0)=0$ for every $1\leq j\leq m$, we have $\zeta_j=0$
for every $1\leq j\leq m$. If
$u_{n+j-1}(a_0)\neq 0$ for some $1\leq j\leq m$, then the matrix
\eqref{eq:matrix} has eigenvalues $0$ (with multiplicity $m-1$) and
$\sum_{i=1}^mu_{n+i-1}^2(a_0)$ (with multiplicity $1$), so that
\begin{equation*}
  \zeta_j=0\quad\text{for every }1\leq j\leq m-1,\quad \zeta_m=
  \sum_{i=1}^mu_{n+i-1}^2(a_0).
\end{equation*}
Therefore, also taking Proposition \ref{p:Phi1asd} into account, we obtain 
\begin{equation*}
  \lambda_{n+m-1}^{a_\ell}(\Omega,p) =\lambda_n (\Omega,p)+
\frac{\pi\big(\sum_{i=1}^mu_{n+i-1}^2(a_0)\big)}{|\log \mathop{\rm
    dist}(a_\ell,\partial\Omega)|}+o\left(\frac{1}{|\log \mathop{\rm
    dist}(a_\ell,\partial\Omega)|}\right) \quad\text{as }\ell\to\infty,
\end{equation*}
and, if $m>1$,
\begin{equation*}
  \lambda_{n+i-1}^{a_\ell}(\Omega,p) =\lambda_n (\Omega,p)+o\left(\frac{1}{|\log \mathop{\rm
    dist}(a_\ell,\partial\Omega)|}\right) \quad\text{as }\ell\to\infty,
\end{equation*}
for every $1\leq i\leq m-1$. The conclusion then follows
from the Urysohn subsequence principle.
\end{proof}

\appendix
\section{} \label{sec_appendix}

In this appendix, we collect some known results used in the paper.

We first recall the well-known \emph{Lemma on small
  eigenvalues} by Y. Colin de Verdi\`ere \cite{ColindeV1986}, see also \cite{Courtois1995} and
\cite{ALM2022}.

\begin{lemma}[Lemma on small eigenvalues \cite{ColindeV1986}]\label{lemma:CdV}
  Let $(H,(\cdot,\cdot)_{H})$ and
  $(\mathcal D,(\cdot,\cdot)_{\mathcal D})$ be real Hilbert spaces
  such that $\dim H=+\infty$, $\mathcal{D}$ is a dense subspace of
  $H$, and the embedding
  $\mathcal D \hookrightarrow \hookrightarrow H$ is continuous and
  compact.  Let $q: \mathcal{D}\times\mathcal{D}\to\R$ be a symmetric
  and continuous bilinear form, such that $q$ is weakly coercive,
  i.e., there exists $A,B>0$ such that
  \begin{equation*}
    q(v,v)+A\|v\|_H^2\geq B\|v\|_{\mathcal D}^2\quad\text{for
      every }v\in \mathcal D,
  \end{equation*}
  where $\|\cdot\|_H=\sqrt{(\cdot,\cdot)_{H}}$ and
  $\|\cdot\|_{\mathcal D}=\sqrt{(\cdot,\cdot)_{\mathcal D}}$.

  Let $\{\nu_j\}_{j\in\N\setminus\{0\}}$ be the eigenvalues of $q$
  (repeated with their multiplicities, in ascending order) and let
  $\{{\mathbf e}_j\}_{j\in\N\setminus\{0\}}$ be an orthonormal basis of $H$ such
  that, for every $j\geq1$, ${\mathbf e}_j\in\mathcal D$ is an
  eigenvector of $q$ associated to the eigenvalue $\nu_j$, i.e.
  $q({\mathbf e}_j,v)=\nu_j({\mathbf e}_i,v)_{H}$ for every
  $v\in \mathcal D$ and $j\geq1$.

Let $m\in \N\setminus\{0\}$ and $F\subset \mathcal{D}$ be
a $m$-dimensional linear subspace of $\mathcal D$. Let
 $\{\xi_j^F\}_{j=1,\dots,m}$ be the eigenvalues (in
          ascending order, repeated with their multiplicities) of $q$
          restricted to $F$. 

Assume that there exist $n\in\N\setminus\{0\}$ and
  $\gamma>0$ such that
	\begin{itemize}
        \item[\rm (H1)] $\nu_j\leq -\gamma$ for all $j\leq n-1$,
          $|\nu_j|\leq \gamma$ for all $j=n,\dots,n+m-1$, and 
          $\nu_j\geq \gamma$ for all $j\geq n+m$;
        \item[\rm (H2)]
          $\delta:=\sup\{|q(v,\varphi)|\colon v\in \mathcal{D},~\varphi\in
          F,\|v\|_H=\|\varphi\|_H=1\}<\gamma/\sqrt{2}$.
	\end{itemize}
        Then
	\begin{equation*}
          \left|\nu_{n+j-1}-\xi_j^F\right|\leq\frac{4\delta^2}{\gamma}
          \quad\text{for all }j=1,\dots,m.
	\end{equation*}
	Moreover, if
        $\Pi:H\to
        \mathop{\rm span}\,\{{\mathbf e}_n,\dots, {\mathbf e}_{n+m-1}\}$ denotes the
        orthogonal projection onto the subspace of
          $\mathcal D$  spanned by $\{{\mathbf e}_n,\dots, {\mathbf e}_{n+m-1}\}$,
         we have
	\begin{equation}\label{eq:cdv-eigenfunctions}
		\frac{\|v-\Pi v\|_{H}}{\|v\|_H}\leq
                \frac{\sqrt{2}\delta}{\gamma}
                \quad\text{for every }v\in F.
	\end{equation}
\end{lemma}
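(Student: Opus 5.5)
The plan is to split each $\varphi\in F$ along the eigenbasis $\{\mathbf e_j\}$ into a component in $E:=\mathop{\rm span}\{\mathbf e_n,\dots,\mathbf e_{n+m-1}\}$ and components below and above it. The off-$E$ part is small by (H1)--(H2), and then $\Pi$ compares Rayleigh quotients on $F$ and $E$.

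\emph{Step 1: the eigenfunction estimate \eqref{eq:cdv-eigenfunctions}.} Fix $\varphi\in F$ and set $w:=\varphi-\Pi\varphi$. Write $w=w_-+w_+$, where $w_-=\sum_{j\le n-1}(\varphi,\mathbf e_j)_H\mathbf e_j$ and $w_+=\sum_{j\ge n+m}(\varphi,\mathbf e_j)_H\mathbf e_j$. Here $w_-$ is a finite combination of eigenvectors, hence lies in $\mathcal D$, and $w_+=\varphi-\Pi\varphi-w_-\in\mathcal D$. Eigenvectors for distinct indices are $q$-orthogonal ($q(\mathbf e_j,v)=\nu_j(\mathbf e_j,v)_H$). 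Extending this to $w_+$, an infinite series, needs continuity of $q$ on $\mathcal D$ and the fact that the eigen-expansion converges in $\mathcal D$, which follows from weak coercivity. Granting this, (H1) gives
\begin{equation*}
q(\varphi,w_+)=q(w_+,w_+)\ge\gamma\|w_+\|_H^2,\qquad -q(\varphi,w_-)=-q(w_-,w_-)\ge\gamma\|w_-\|_H^2 .
\end{equation*}
Adding these and using (H2) with $v=w_+-w_-$, where $\|w_+-w_-\|_H=\|w\|_H$ by orthogonality, yields $\gamma\|w\|_H^2\le\delta\|\varphi\|_H\|w\|_H$. Hence $\|w\|_H\le(\delta/\gamma)\|\varphi\|_H$, which is stronger than \eqref{eq:cdv-eigenfunctions}.

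\emph{Step 2: $\Pi|_F$ is an isomorphism onto $E$.} Since $\delta/\gamma<1/\sqrt2<1$, Step 1 gives $\|\Pi\varphi\|_H\ge(1-\delta/\gamma)\|\varphi\|_H>0$ for $\varphi\ne0$. So $\Pi|_F$ is injective, and since $\dim F=\dim E=m$ it is bijective onto $E$.

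\emph{Step 3: comparing Rayleigh quotients.} For $\varphi\in F$ with $w=\varphi-\Pi\varphi$, $q$-orthogonality gives $q(\varphi,\varphi)=q(\Pi\varphi,\Pi\varphi)+q(w,w)$. Also $q(w,w)=q(\varphi,w)$, because $q(\Pi\varphi,w)=0$. By (H2) and Step 1, $|q(w,w)|\le\delta\|\varphi\|_H\|w\|_H\le(\delta^2/\gamma)\|\varphi\|_H^2$. Moreover $\|\varphi\|_H^2=\|\Pi\varphi\|_H^2+\|w\|_H^2$, and (H1) gives $|q(\Pi\varphi,\Pi\varphi)|\le\gamma\|\Pi\varphi\|_H^2$. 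Therefore
\begin{equation*}
\Big|\frac{q(\varphi,\varphi)}{\|\varphi\|_H^2}-\frac{q(\Pi\varphi,\Pi\varphi)}{\|\Pi\varphi\|_H^2}\Big|
\le\frac{|q(w,w)|}{\|\varphi\|_H^2}+|q(\Pi\varphi,\Pi\varphi)|\frac{\|w\|_H^2}{\|\varphi\|_H^2\|\Pi\varphi\|_H^2}
\le\frac{\delta^2}{\gamma}+\frac{\gamma\|w\|_H^2}{\|\varphi\|_H^2}\le\frac{2\delta^2}{\gamma}.
\end{equation*}
Now apply the min-max principle. $\xi_j^F$ is the min-max of the left Rayleigh quotient over $j$-dimensional subspaces $G\subset F$. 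Likewise $\nu_{n+j-1}$ is the min-max of $q(\psi,\psi)/\|\psi\|_H^2$ over $j$-dimensional subspaces of $E$, because $q|_E$ is diagonal in $\mathbf e_n,\dots,\mathbf e_{n+m-1}$ with entries $\nu_n\le\dots\le\nu_{n+m-1}$. By Step 2, $G\mapsto\Pi G$ is a bijection between $j$-dimensional subspaces of $F$ and of $E$. The uniform bound above then gives $|\xi_j^F-\nu_{n+j-1}|\le2\delta^2/\gamma\le4\delta^2/\gamma$.

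The only delicate point is the justification in Step 1 that $w_\pm\in\mathcal D$ and that cross terms $q(w_-,w_+)$ and $q(\Pi\varphi,w_+)$ vanish even though $w_+$ is an infinite series. I would handle this by noting that weak coercivity makes $q+A(\cdot,\cdot)_H$ an equivalent inner product on $\mathcal D$. In that inner product the $\mathbf e_j$ are orthogonal, so the eigen-expansion of $\varphi$ converges in $\mathcal D$, and $q$-orthogonality passes to the limit by continuity of $q$.
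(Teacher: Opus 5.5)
The paper gives no proof of this lemma; it only quotes it from \cite{ColindeV1986}. Your argument is correct and is essentially the standard one: split $\varphi-\Pi\varphi$ into its spectral parts below and above the cluster, control them via (H1)--(H2), then transport the min-max for $q|_F$ to $E$ through the isomorphism $\Pi|_F$.

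The one genuine difference is in Step 1. The factor $\sqrt2$ in \eqref{eq:cdv-eigenfunctions} is what one gets by testing (H2) against $w_+$ and $w_-$ separately, each giving $\|w_\pm\|_H\le(\delta/\gamma)\|\varphi\|_H$. Your single test function $w_+-w_-$ instead gives $\|w\|_H\le(\delta/\gamma)\|\varphi\|_H$ directly. This carries through Step 3 to $2\delta^2/\gamma$ in place of $4\delta^2/\gamma$, and it means you only need $\delta<\gamma$ to make $\Pi|_F$ injective. Since your constants are sharper than the stated ones, the lemma follows.

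On the point you flag as delicate, the justification is slightly misplaced. The cross terms $q(w_-,w_+)$ and $q(\Pi\varphi,w_+)$ need no series argument: $w_+\in\mathcal D$, and the eigenrelation $q(\mathbf e_j,v)=\nu_j(\mathbf e_j,v)_H$ holds for every $v\in\mathcal D$. The expansion in $\mathcal D$ is needed only for the lower bound $q(w_+,w_+)\ge\gamma\|w_+\|_H^2$. There, orthogonality of the $\mathbf e_j$ in the equivalent inner product $q+A(\cdot,\cdot)_H$ gives convergence of the series in $\mathcal D$. Its limit equals $w_+$ because the series also converges in $H$, by continuity of the embedding, and $\{\mathbf e_j\}$ is complete in $H$.
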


The following lemma states the well known \emph{diamagnetic
  inequality}. We refer to \cite[Theorem 7.21]{LL} for a proof.
\begin{lemma}\label{l:diam}
  If $\Omega\subset\R^2$ is a bounded open set, $a\in\Omega$, and
  $u\in H^{1,a}(\Omega,\C)$, then $|u|\in H^1(\Omega)$ and
\begin{equation*}
|\nabla |u|(x)|\leq
\left|i\nabla u(x)+A_a(x)u(x)\right| \quad \text{for 
  a.e. }  x\in \Omega.
\end{equation*}
\end{lemma}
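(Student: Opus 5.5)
The plan is the classical regularization argument for Kato's inequality. The key observation is that $A_a$ is real-valued, so the magnetic term drops out of the real part of $\bar u\nabla u$.

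\emph{Step 1 (algebraic identity).} For $u\in H^{1,a}(\Omega,\C)$ I first recall that $H^{1,a}(\Omega,\C)\subset H^1(\Omega,\C)$. On any compact $K\subset\Omega\setminus\{a\}$ the field $A_a$ is smooth and bounded, so $(i\nabla+A_a)u\in L^2_{\rm loc}(\Omega\setminus\{a\})$. Writing $i\nabla u+A_au=i(\nabla u-iA_au)$, we get $|i\nabla u+A_au|=|\nabla u-iA_au|$. Since $A_a$ is real, $\mathrm{Re}(\bar u\,(-iA_a u))=\mathrm{Re}(-iA_a|u|^2)=0$, which gives the pointwise identity
\begin{equation*}
  \mathrm{Re}(\bar u\nabla u)=\mathrm{Re}\big(\bar u\,(\nabla u-iA_au)\big)\quad\text{a.e. in }\Omega .
\end{equation*}

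\emph{Step 2 (regularized modulus).} For $\e>0$ I set $u_\e:=\sqrt{|u|^2+\e^2}-\e$. This is a $C^1$ Lipschitz function of $(\mathrm{Re}\,u,\mathrm{Im}\,u)$ vanishing at $0$, so by the chain rule in $H^1$ we have $u_\e\in H^1(\Omega)$ and
\begin{equation*}
  \nabla u_\e=\frac{\mathrm{Re}(\bar u\nabla u)}{\sqrt{|u|^2+\e^2}} .
\end{equation*}
By Step 1 and Cauchy--Schwarz,
\begin{equation*}
  |\nabla u_\e|\le \frac{|u|}{\sqrt{|u|^2+\e^2}}\,|\nabla u-iA_au|\le |i\nabla u+A_au|
\end{equation*}
a.e. in $\Omega\setminus\{a\}$, hence a.e. in $\Omega$. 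In addition, $|\nabla u_\e|\le|\nabla u|$.

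\emph{Step 3 (passage to the limit).} As $\e\to0^+$, $u_\e\to|u|$ pointwise and in $L^2(\Omega)$, and $\{u_\e\}$ is bounded in $H^1(\Omega)$ because $|\nabla u_\e|\le|\nabla u|$. Hence $u_\e\rightharpoonup|u|$ weakly in $H^1(\Omega)$ along a sequence, so $|u|\in H^1(\Omega)$.

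To obtain the pointwise bound rather than only a norm bound, I would identify the limit gradient directly. On $\{u\neq0\}$ one has $\nabla u_\e\to \mathrm{Re}(\bar u\nabla u)/|u|$ a.e. On $\{u=0\}$ one has $\nabla u_\e=0$, and also $\nabla|u|=0$ a.e. there, by the standard fact that Sobolev gradients vanish a.e. on level sets. Dominated convergence (with dominating function $|\nabla u|$) then gives $\nabla u_\e\to\nabla|u|$ in $L^2$. Passing to the limit in the a.e. inequality of Step 2 yields $|\nabla|u||\le|i\nabla u+A_au|$ a.e. in $\Omega$.

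The only delicate point is this identification of $\nabla|u|$ on the zero set together with the a.e. convergence. I would handle it exactly as in the proof of Kato's inequality, and no further obstacle is expected. The singularity of $A_a$ at $a$ is harmless because $\{a\}$ has measure zero and $A_a$ only enters pointwise.
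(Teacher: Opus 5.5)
Your argument is correct and is essentially the standard proof behind the paper's citation. The paper gives no proof of its own and refers to Lieb--Loss, Theorem 7.21, which likewise combines the regularization $\sqrt{|u|^2+\e^2}$ (their Theorem 6.17) with the observation that the magnetic term drops out of $\mathrm{Re}(\bar u\,\cdot)$ because $A_a$ is real. One simplification: you do not need the level-set fact. Since $\nabla u_\e\equiv0$ on $\{u=0\}$ for every $\e$, dominated convergence in $\int u_\e\,\partial_i\varphi\,dx=-\int\partial_i u_\e\,\varphi\,dx$ directly gives $\nabla|u|=\mathrm{Re}(\bar u\nabla u)/|u|$ on $\{u\neq0\}$ and $\nabla|u|=0$ elsewhere, and this also settles the identification on $\{u\neq0\}$, which your write-up leaves implicit.
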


\section*{Acknowledgements}
\noindent
V.~Felli and G.~Siclari are partially supported by the 2026
INdAM--GNAMPA project ``Asymptotic analysis of variational problems''
(CUP E53C25002010001). P.~Roychowdhury is partially supported by the JSPS
standard postdoctoral research (Fellowship ID P25328).

\bibliographystyle{acm}
\bibliography{references}	
\end{document}